\theoremstyle{definition}
\newtheorem{dfn}{Definition}[section]
\newtheorem{thm}[dfn]{Theorem}
\newtheorem{prop}[dfn]{Proposition}
\newtheorem{lem}[dfn]{Lemma}
\newtheorem{rmk}[dfn]{Remark}
\newtheorem*{dfn*}{Definition}
\newtheorem*{thm*}{Theorem}
\DeclareMathOperator{\SPAN}{span}
\title{\large{Level 2 standard modules for $A^{(2)}_{9}$ and partition conditions of Kanade-Russell}}
\author{Kana Ito}
\date{}
\address{Tokyo Institute of Technology, RIKEN AIP}
\email{ito.k.bn@m.titech.ac.jp}
\begin{document}
	\begin{abstract}
		We give $Z$-monomial generators for the vacuum spaces of
		certain level 2 standard modules of type $A^{(2)}_{\textrm{odd}}$
		with indices 
		running over
		integer partitions.
		In particular,  we give a Lie theoretic interpretation of
		the Rogers-Ramanujan type identities of 
		type $A^{(2)}_{9}$,
		which were conjectured by Kanade-Russell, and proven by Bringmann et al.  and Rosengren.
	\end{abstract}

\maketitle
\allowdisplaybreaks

\section{Introduction}
	\subsection{Rogers-Ramanujan type identities and affine Lie algebras}
		The RR (short for the Rogers-Ramanujan) identities are stated as
			\begin{align*}
				\sum^{\infty}_{n=0}
				\frac{q^{n^{2}}}{(q;q)_{n}}
				=\frac{1}{(q,q^{4};q^{5})_{\infty}},\qquad
				\sum^{\infty}_{n=0}
				\frac{q^{n^{2}+n}}{(q;q)_{n}}
				=\frac{1}{(q^{2},q^{3};q^{5})_{\infty}},
			\end{align*}
		where,  for $n\in\mathbb{Z}_{\ge0}\cup\{\infty\}$,  the Pochhammer symbols are defined as
			\begin{align*}
				(a;q)_{n}
				\coloneqq
				\prod^{n-1}_{i=0}(1-aq^{i}),
				\qquad
				(a_{1},\dotsc,a_{k};q)_{n}
				\coloneqq
				(a_{1};q)_{n}\cdots(a_{k};q)_{n}.
			\end{align*}

		MacMahon and Schur found that the RR identities are equivalent to 
		the RRPT (PT is short for partition theorem) \cite[Chapter 2.4]{S}. 
		The RRPT is stated as;
			
		\begin{description}
			\item[RRPT]
				The number of partitions of $n$ 
				in $R_{i}$ below 
				is equal to
				the number of partitions of $n$ in $T^{(5)}_{i,5-i}$ 
				(i.e., $R_{i}\stackrel{\rm{PT}}{\sim}{T^{(5)}_{i,5-i}}$) for $i=1,2$.
		\end{description}
		Here,  $n$ is a non-negative integer, 
		and an integer partition is an element in the set
			\begin{align*}
				\textrm{Par}\coloneqq
				\{(\lambda_{1},\lambda_{2},\dotsc,\lambda_{l})
				\in(\mathbb{Z}_{>0})^{l}\mid
				l\ge0,\ 
				\lambda_{1}\ge\lambda_{2}\ge\cdots\ge\lambda_{l}\},
			\end{align*}
		and a partition of $n$ is
		$\lambda=(\lambda_{1},\lambda_{2},\cdots,\lambda_{l})\in\textrm{Par}$ such that
		$|\lambda|\coloneqq\lambda_{1}+\lambda_{2}+\cdots+\lambda_{l}=n$.
		The subsets of $\textrm{Par}$,  $R_{1},R_{2}$ and $T^{(N)}_{p_{1},\dotsc,p_{k}}$
		are defined as follows,
		\begin{align}
			R_{1}&\coloneqq\{(\lambda_{1},\dotsc,\lambda_{l})\in\textrm{Par}\mid
			\lambda_{i}-\lambda_{i+1}\ge2
			\textrm{ for }1\le i\le l-1\},\label{RRS1}\\
			R_{2}&\coloneqq\{(\lambda_{1},\dotsc,\lambda_{l})\in R_{1}\mid
			\lambda_{l}\ge2\},\label{RRS2}\\
			T^{(N)}_{p_{1},\dotsc,p_{k}}
			&\coloneqq
			\{(\lambda_{1},\dotsc,\lambda_{l})\in \textrm{Par}\mid
			\lambda_{i}\equiv p_{1},p_{2},\dotsc,p_{k}\textrm{ (mod }N)
			\textrm{ for }1\le i\le l\},\notag
		\end{align}
		where $N\ge2$ and $k$ are positive integers, 
		and $p_{i}$ for $1\le{i}\le{k}$ are non-negative integers.
		For $A,B\subseteq\textrm{Par}$,  let $A\stackrel{\rm{PT}}{\sim}{B}$ signify that
		the number of partitions of $n$ in $A$ is equal to the number of partitions of $n$ in $B$
		for every non-negative integer $n$.
%
%
		For the history and integer partitions,  see \cite{And2} and \cite{S}.

		In 1970's, Lepowsky-Milne \cite[Theorem 5.14]{LM} found similarities
		between the infinite products of the RR identities and
		the characters of the level 3 standard modules
		(i.e., the highest weight  integrable modules)  of type $A^{(1)}_{1}$.
		Furthermore,  Lepowsky-Wilson \cite[Corollary 10.5]{LW3}
		(cf. \cite{LW1}, \cite{LW2}) proved the RR identities
		via the vacuum spaces $\Omega(L)$ of the level 3 standard modules of type $A^{(1)}_{1}$. 
		Since the work by Lepowsky-Wilson,
		there has been an expectation that RR type identities and partition theorems are 
		obtained from the standard modules of affine Lie algebras.

		A well-known example is the Andrews-Gordon identities
		for type $A^{(1)}_{1}$ odd levels,
		and their analogue for even levels \cite{LW4},  \cite{MP}.
		For type $A^{(2)}_{2}$ level 3,
		Capparelli \cite{CapD} made conjectures of partition theorems,
		which were proven by Andrews \cite{And}, Tamba-Xie \cite{TX},  and Capparelli \cite{Cap}.
		Nandi \cite{Nan} made conjectures of partition theorems
		for type $A^{(2)}_{2}$ level 4,
		which were proven by Takigiku-Tsuchioka \cite{Tak}.
 	
		Regarding the cases of $A^{(2)}_{\textrm{odd}}$ level 2,
		Kanade \cite{Kan} and Bos-Misra \cite{BM} gave a Lie theoretic interpretation
		to the RR type identities respectively in the cases of $A^{(2)}_{5}$ and $A^{(2)}_{7}$ level 2.  		Following these results, we focus on the cases of $A^{(2)}_{\textrm{odd}}$ level 2.

	\subsection{The main results}
		The main results are Theorem \ref{MT} and \ref{A29thm}.
		
		Consider the case of $A^{(2)}_{2s-1}$.
		Let
		$[a;q]_{\infty}\coloneqq(a,q/a;q)_{\infty}$.
		The level 2 standard modules are given by
		$\Lambda^{(n)}\coloneqq(\delta_{0,n}+\delta_{1,n})\Lambda_{0}+\Lambda_{n}$
		for $0\le n\le s$ (modulo the Dynkin diagram).
		The principal character of $L(\Lambda^{(n)})$
		for $0\le n\le s$	 is
		\begin{align}\label{Angelica}
			\chi_{A^{(2)}_{2s-1}}(\Lambda^{(n)})
			=
			\frac{(q^{2s+2};q^{2s+2})_{\infty}}{(q^{2};q^{2})_{\infty}}
			\frac{[q^{2n};q^{2s+2}]_{\infty}}{[q^{n};q^{2s+2}]_{\infty}}.
		\end{align}
		The G\"{o}llnitz-Gordon identities (resp.  the RR identities)
		are obtained in the case of
		$A^{(2)}_{5}$ (resp.  $A^{(2)}_{7}$)
		for $L(\Lambda^{(n)})$
		with
		$n\in\mathbb{Z}\backslash2\mathbb{Z}$ (i.e., $n$ is odd).
		Based on the previous works such as \cite{Kan},  \cite{BM},  and \cite{KR},
		we reduce $Z$-monomial spanning sets of the vacuum spaces of
		$L(\Lambda^{(n)})$ with
		$0\le n\le s$ and 
		$n\in\mathbb{Z}\backslash2\mathbb{Z}$
		in the case of $A^{(2)}_{2s-1}$.

	\subsubsection{Type $A^{(2)}_{\textrm{odd}}$ level 2}
		The following are the Dynkin diagrams of
		type $A_{2s-1}$ and type $A^{(2)}_{2s-1}$ respectively \cite[Chapter 4]{Kac}. 
%
%
		\begin{center}    
			\begin{dynkinDiagram}[text style/.style={scale=0.8},
				edge length=0.9cm,
				labels={1,2,s,2s-2,2s-1},
				label macro/.code={\alpha_{\drlap{#1}}}
				]A{**.*.**}
			\end{dynkinDiagram}
			$\hspace{1cm}$
			\begin{dynkinDiagram}[text style/.style={scale=0.8},
				edge length=0.9cm,
				labels={0,1,2,3,4,s-2,s-1,s},
				label macro/.code={\alpha_{\drlap{#1}}}
				]A[2]{odd}
			\end{dynkinDiagram}
		\end{center}

		One of our main results is as follows.
		\begin{thm}\label{MT}
			Let $L=L(\Lambda^{(n)})$
			be a level 2 standard module of type $A^{(2)}_{2s-1}$, with a highest weight vector
			$v_{L}$ for $1\le n\le s$ and $n\in\mathbb{Z}\backslash2\mathbb{Z}$.
			Then the following set spans $\Omega(L)$,
			\begin{align}\label{MT'}
				\{Z_{i_{1}}\cdots{Z_{i_{l}}}v_{L}
				\mid l\ge0,\hspace{1mm}i_{1},\dotsc,i_{l}\in\mathbb{Z},
				\ i_{1}\le\cdots\le{i_{l}}\le{-1}
				\},
			\end{align}
			where $Z_{2i}(\alpha_{1})=Z_{2i}$ and $Z_{2i-1}(\alpha_{s})=Z_{2i-1}$
			for $i\in\mathbb{Z}$.
			See Definition \ref{Zdef} for the definition of the $Z$-operators.
		\end{thm}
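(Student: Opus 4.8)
The plan is to realize $L=L(\Lambda^{(n)})$ inside a suitable Fock-type space and to exploit the standard machinery of $Z$-operators in the principal (homogeneous-twisted) picture, following the strategy of Lepowsky--Wilson and its refinements by Kanade and Bos--Misra. First I would recall from the earlier setup that the $Z$-operators $Z_i(\alpha)$ commute with the principal Heisenberg subalgebra, so the vacuum space $\Omega(L)$ is a module over the algebra they generate; hence a spanning set for $\Omega(L)$ is obtained by applying arbitrary (ordered) monomials in the $Z$-operators to the highest weight vector $v_L$, provided one first shows that only the modes attached to $\alpha_1$ (in even degree) and $\alpha_s$ (in odd degree) are needed, and that non-negative modes may be discarded. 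The reduction to these two families of operators should come from the explicit structure of $A^{(2)}_{2s-1}$ at level $2$: the relevant $Z$-algebra relations — the generalized commutation relations and, crucially, the level-specific ``cubic'' (or generating-function) relations coming from the null vectors in the Verma module — allow one to rewrite $Z_i(\alpha_j)$ for other simple roots $\alpha_j$ in terms of the $Z_{2i}$ and $Z_{2i-1}$ above, together with Heisenberg modes which act trivially on $\Omega(L)$.

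The key steps, in order, are: (1) recall the $Z$-operator formalism and the decomposition $L\cong M(1)\otimes\Omega(L)$ as modules over the principal Heisenberg algebra $\hat{\mathfrak s}$, so that $\Omega(L)$ is spanned by $Z$-monomials acting on $v_L$; (2) extract, from the level $2$ integrability conditions (equivalently, the vanishing of certain singular vectors), the relations that let one eliminate all $Z_i(\alpha_j)$ except the two distinguished families — this is where the parity pattern ``$Z_{2i}\leftrightarrow\alpha_1$, $Z_{2i-1}\leftrightarrow\alpha_s$'' emerges from the $\mathbb{Z}/2$-grading of the twisted construction; (3) use the generalized commutation relations among the $Z_{2i}$ and $Z_{2i-1}$ to straighten any monomial into the ordered form $Z_{i_1}\cdots Z_{i_l}$ with $i_1\le\cdots\le i_l$; (4) show that the positive and zero modes can be pushed to the right and annihilate $v_L$ (using $i_l\le -1$), invoking the highest-weight condition; and (5) assemble these into the claimed spanning set \eqref{MT'}.

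The main obstacle will be step (2): establishing the precise ``relation of degree $\le 2$'' that the $Z$-operators satisfy at level $2$ and verifying that it suffices to reduce to exactly the two families indexed by parity. This requires a careful analysis of the twisted vertex operator realization of $A^{(2)}_{2s-1}$ and of how the defining ideal of the level $2$ standard module translates into identities among generating functions $Z(\alpha,\zeta)=\sum_i Z_i(\alpha)\zeta^{-i}$; one must check that the products $Z(\alpha_j,\zeta)$ for intermediate $j$ are expressible via $Z(\alpha_1,\zeta)$, $Z(\alpha_s,\zeta)$ and Heisenberg data, uniformly in $s$ and for the odd choices of $n$. The ordering/straightening in step (3) is then a formal consequence of the generalized commutation relations, and steps (4)--(5) are routine once the highest-weight filtration is set up; I would present step (2) as the heart of the argument and the remaining steps as bookkeeping.
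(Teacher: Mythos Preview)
Your outline captures the broad shape of the argument, but two of the steps you label as routine are precisely where the work lies, and the mechanism you propose for the key step is not the one that actually succeeds.

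For step~(2), the parity pattern $Z_{2i}=Z_{2i}(\alpha_1)$, $Z_{2i-1}=Z_{2i-1}(\alpha_s)$ does not come from abstract null-vector or ``cubic'' relations at level~2. It comes from the concrete embedding $L(\Lambda^{(n)})\hookrightarrow L(\Lambda_0)\otimes L(\Lambda_1)$ (valid exactly for odd~$n$): on this tensor product one computes directly that $Z_{\mathrm{odd}}(\beta_j)=0$ for $1\le j\le s-1$ and $Z_{\mathrm{even}}(\alpha_s)=0$. The remaining operators $Z_{\mathrm{even}}(\beta_j)$ for $2\le j\le s-1$ are then eliminated not by a single identity but by an induction on~$j$: the generalized commutation relation for $(\alpha_1,\beta_n)$ expresses $Z_{\mathrm{even}}(\beta_{n+1})$ in terms of $Z_{\mathrm{even}}(\alpha_1)Z_{\mathrm{even}}(\beta_n)$ and $Z_{\mathrm{even}}(\beta_{n-1})$, with a nonvanishing leading coefficient that must be checked. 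Without the tensor-product input you have no starting point for this induction.

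For step~(3), the straightening is \emph{not} a formal consequence of the commutation relations. Commuting two odd-indexed operators $Z_jZ_{j'}$ (both attached to $\alpha_s$) via the generalized commutation relation produces, on the right-hand side, terms $Z_{j+j'}(\beta_{s-q})$, which after the reduction above become $Z$-monomials built entirely from \emph{even} operators. These terms are not comparable to the original monomial in the length-plus-$T$ order alone. The paper handles this by introducing a secondary grading by the number of odd parts in a monomial and running the induction lexicographically on (number of odd parts, length, $T$-order); this is the genuinely new ingredient relative to the $A^{(2)}_5$ and $A^{(2)}_7$ cases. Your proposal should flag this rather than treat the ordering as bookkeeping.
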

		In other words, $\Omega(L)$ has a spanning set of $Z$-monomials
		with indices corresponding to integer partitions,
		consisting of $Z$-operators associated with the edge vertex $\alpha_{1}$
		and the middle vertex $\alpha_{s}$ of the Dynkin diagram of type $A_{2s-1}$.
		Thus,  $\Omega(L)$ has a basis of $Z$-monomials
		corresponding to a certain subset of $\textrm{Par}$.
	
		We give a brief comment on the proof of Theorem \ref{MT} in \S3.
		For $1\le{2i-1}\le{s}$, 
		$L(\Lambda^{(2i-1)}-(i-1)\delta)$ is included in
		$L(\Lambda_{0})\otimes{L(\Lambda_{1})}$ \cite[Theorem 4.6]{BM2}.
		On $L(\Lambda_{0})\otimes{L(\Lambda_{1})}$,  we have
		$Z_{\textrm{odd}}(\alpha_{1}+\cdots+\alpha_{j})=Z_{\textrm{even}}(\alpha_{s})=0$
		for $1\le{j}\le{s-1}$ (cf. \cite[\S3]{Kan}),
		and the vacuum spaces of level 2 standard modules 
		included in $L(\Lambda_{0})\otimes{L(\Lambda_{1})}$ 
		has a spanning set of $Z$-monomials consisting of 
		$Z_{\textrm{even}}(\alpha_{1}+\cdots+\alpha_{j})$'s and $Z_{\textrm{odd}}(\alpha_{s})$. 
		For $2\le{j}\le s-1$ and $v\in L(\Lambda_{0})\otimes{L(\Lambda_{1})}$,  
		$Z_{\textrm{even}}(\alpha_{1}+\cdots+\alpha_{j})$ acting on $v$ 
		is described as a span of $Z$-monomials consisting of
		$Z_{\textrm{even}}(\alpha_{1})$'s (Proposition \ref{beta_n}).
		Thus we 
		set
		$Z_{\textrm{even}}=Z_{\textrm{even}}(\alpha_{1})$ and
		$Z_{\textrm{odd}}=Z_{\textrm{odd}}(\alpha_{s})$,
		and give commutation relation between adjacent $Z$-operators
		using generalized commutation relation (Proposition \ref{pree},  \ref{preo},  \ref{proo}).
		Lepowsky-Wilson \cite{LW3} and Kanade \cite{Kan} arranged $Z$-monomials
		in the order of length and $T$ (Definition \ref{OrderT}).
		In this paper, 
		we add the order of numbers of odd parts in $Z$-monomials (cf. (\ref{order})).

	\subsubsection{Type $A^{(2)}_{5}$ and $A^{(2)}_{7}$ level 2}\label{Goll}
		Based on Theorem \ref{MT},
		we give $Z$-monomial generators of the vacuum spaces
		in the cases of type $A^{(2)}_{5}$ and $A^{(2)}_{7}$ level 2
		corresponding to the Göllnitz PT and the RRPT respectively
		(Theorem \ref{Thm5th} and Theorem \ref{Thm7th} below).
		The Göllnitz PT is stated as $G_{i}\stackrel{\rm{PT}}{\sim}T^{(8)}_{2i-1,4,9-2i}$ for $i=1,2$
		\cite[Chapter 7]{And2},
		where
		$G_{1}$ and $G_{2}$ are defined as follows,
		\begin{align*}
			G_{1}&=\{(\lambda_{1},\dotsc,\lambda_{l})\in R_{1}\mid
			l\ge0,
			\\*
			&\qquad\qquad\textrm{ if }
			\lambda_{i},\lambda_{i+1}\in2\mathbb{Z}
			\mbox{, then }{\lambda_{i}-\lambda_{i+1}\ge{4}}
			\mbox{ for }1\le i\le l-1
			\},\\
			G_{2}&=\{(\lambda_{1},\dotsc,\lambda_{l})\in G_{1}\mid
			\lambda_{l}\ge3\}.
		\end{align*}
		The principal characters $\chi(\Lambda^{(2i-1)})$
		in the case of $A^{(2)}_{5}$ (resp.  $A^{(2)}_{7}$), 
		which are equal to the infinite products of the GG (resp.  RR) identities, 
		are the generating functions for $T^{(8)}_{2i-1,4,9-2i}$ (resp.  $T^{(5)}_{i,5-i}$).
		We obtain the the $Z$-monomial bases of the vacuum spaces
		with indices satisfying the sum side partition conditions of the Göllnitz PT (resp.  RRPT).
		
		\begin{thm}\label{Thm5th}
			Consider the case of type $A^{(2)}_{5}$.
			Let $v_{\Lambda}$ be a highest weight vector of $L(\Lambda)$.
			Then 
			$\Omega(L(\Lambda^{(2i-1)}))$
			has the following basis for $i=1,2$,
			\begin{align*}
				\{Z_{i_{1}}\cdots{Z_{i_{l}}}v_{\Lambda^{(2i-1)}}
				\mid 
				l\ge0,\ (-i_{1},\dotsc,-i_{l})\in G_{i}
				\}.
			\end{align*}
		\end{thm}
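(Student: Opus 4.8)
The plan is to derive Theorem~\ref{Thm5th} from Theorem~\ref{MT} by imposing the two extra partition conditions recorded in the commented-out block after Theorem~\ref{MT}, namely that adjacent parts in a $Z$-monomial can be forced not to differ by $1$ and that odd parts need not repeat. First I would specialize $s=3$, so that the edge vertex is $\alpha_1$ and the middle vertex is $\alpha_3$, and record the precise generalized commutation relations among $Z_{\mathrm{even}}=Z_{\mathrm{even}}(\alpha_1)$ and $Z_{\mathrm{odd}}=Z_{\mathrm{odd}}(\alpha_3)$ coming from Propositions~\ref{pree}, \ref{preo}, \ref{proo}. Using these I would rewrite any $Z$-monomial $Z_{i_1}\cdots Z_{i_l}v_{\Lambda^{(2i-1)}}$ with $i_j \le \cdots \le i_l \le -1$ from the spanning set \eqref{MT'}: whenever two adjacent indices satisfy $i_{k+1}-i_k=1$, the relation between $Z_{i_k}Z_{i_{k+1}}$ expresses it as a linear combination of monomials that are strictly smaller in the order on $Z$-monomials (length, then $T$, then number of odd parts, cf. \eqref{order}), and similarly when two adjacent indices are equal odd integers the relation $Z_{\mathrm{odd}}^2$-type rewrites it in terms of strictly smaller monomials. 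A Noetherian-induction argument on this order then shows that the monomials whose index sequences $(-i_1,\dots,-i_l)$ land in $G_i$ already span $\Omega(L(\Lambda^{(2i-1)}))$; here the condition $\lambda_l\ge 2$ versus $\lambda_l\ge 3$ for $i=1$ versus $i=2$ is handled exactly as the analogous $\lambda_l\ge 2$ condition in \eqref{RRS2} via the action of the lowest $Z$-operators on the highest weight vector $v_{\Lambda^{(2i-1)}}$, using that $\Lambda^{(2i-1)}$ differs between the two cases.

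For the G\"ollnitz conditions specifically, note that $G_1$ adds to $R_1$ (adjacent difference $\ge 2$) the requirement that two consecutive \emph{even} parts differ by at least $4$; translating back, a $Z$-monomial with $i_{k+1}-i_k=1$ (difference $1$, forbidden in $R_1$) or with $i_{k+1}-i_k\in\{2,3\}$ and both $i_k,i_{k+1}$ even (difference $2$ or $3$ between even parts) must be eliminated. The difference-$1$ case and the odd-repeat case are the content of the two conditions already proved in the $A^{(2)}_{2s-1}$ setting, so what is genuinely new for $s=3$ is: (a) consecutive equal even indices, and (b) consecutive even indices differing by $2$ or $3$. I would handle these by writing down the degree-$2$ relations $Z_{\mathrm{even}}(\alpha_1)_{m}Z_{\mathrm{even}}(\alpha_1)_{m'}$ for small $|m-m'|$ explicitly from the generalized commutation relations on $L(\Lambda_0)\otimes L(\Lambda_1)$ and checking that each such product, when $\{m,m'\}$ violates the $G_1$ spacing, is a combination of admissible monomials of no greater length together with strictly smaller ones --- this is the place where the specific numerics of $A^{(2)}_5$ (the $2s+2=8$ in \eqref{Angelica}) enter and where I expect to have to be careful.

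The final step is to upgrade "spanning" to "basis." For this I would compare the principal graded dimension of $\Omega(L(\Lambda^{(2i-1)}))$ with the generating function $\sum_{\lambda\in G_i}q^{|\lambda|}$. By the principal character formula \eqref{Angelica} with $s=3$, $\chi_{A^{(2)}_5}(\Lambda^{(2i-1)})$ equals the infinite product side of the G\"ollnitz--Gordon identities, which is the generating function of $T^{(8)}_{2i-1,4,9-2i}$; and by the G\"ollnitz partition theorem $G_i \stackrel{\rm PT}{\sim} T^{(8)}_{2i-1,4,9-2i}$, stated in \S\ref{Goll}, we get $\sum_{\lambda\in G_i}q^{|\lambda|}=\chi_{A^{(2)}_5}(\Lambda^{(2i-1)})$ up to the standard principal-gradation normalization relating $\chi$ to $\dim\Omega(L)$. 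Since the proposed spanning set is indexed by $G_i$ and has the right graded size, it must be a basis. The main obstacle, I expect, is step (b) above: controlling products of two even $Z$-operators at index gaps $2$ and $3$ so that they reduce into the $G_1$-admissible set without increasing length, since unlike the pure difference-$\ge 2$ reduction this uses relations special to level $2$, rank $A^{(2)}_5$, and requires that the order \eqref{order} (with the odd-parts refinement) actually decreases --- verifying the monotonicity in that refined order is the delicate bookkeeping.
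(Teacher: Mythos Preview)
Your overall plan—reduce the Theorem~\ref{MT} spanning set by eliminating each forbidden adjacent pattern, then invoke the G\"ollnitz--Gordon identities for equality of graded dimensions—is exactly the paper's strategy in \S\ref{5th}. But there is a genuine gap in how you treat the odd--odd pairs.

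You claim the odd-repeat elimination is ``already proved in the $A^{(2)}_{2s-1}$ setting,'' citing the commented block after Remark~\ref{rmk}. However, Proposition~\ref{proo} and its $j=j'$ analogue only yield
\[
Z_jZ_{j'}v\in\SPAN\bigl((\tilde S(2;v)\cap\tilde T(j,j';v))\cup\tilde S(0;v)\bigr),
\]
and the $\tilde S(0;v)$ piece carries no a~priori bound on length or on $T$-position. With your stated order (length first, then $T$, then number of odd parts) such a term is \emph{not} automatically smaller, so the Noetherian induction stalls right there. The paper avoids this by proving, \emph{specifically for $s=3$}, the stronger pure-$T$ statement $Z_jZ_{j'}v\in T(j,j';v)$ for all odd $j\ge j'$: since the only auxiliary root occurring is $\beta_2$, one writes $j+j'=(j{+}1)+(j'{-}1)$ with $j{+}1,j'{-}1\in2\mathbb{Z}$ and applies Proposition~\ref{pree}(c) to place $Z_{j+j'}(\beta_2)v$ inside $T(j,j';v)$. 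That is the content of the first two propositions of \S\ref{5th}, and it is what makes the induction on (length,~$T$) alone go through. To repair your argument you must either reproduce this $s=3$ computation, or else promote the odd-count ahead of $T$ in your ordering and then separately verify that the $\tilde S(0;v)$ contributions have length at most two—which again uses that $s=3$ so that only $\beta_2$, not higher $\beta_n$, appears.

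Two minor remarks: two even indices cannot differ by~$3$, so your case~(b) reduces to the gap-$2$ case $Z_iZ_{i+2}$ with $i\in2\mathbb{Z}$; and the paper deduces the initial condition $\lambda_l\ge 3$ for $i=2$ from the principal character $\chi(\Lambda_3)$ rather than from a direct analysis of low-index $Z$-operators acting on $v_{\Lambda_3}$.
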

		\begin{thm}\label{Thm7th}
			Consider the case of type $A^{(2)}_{7}$. 
			Let $v_{\Lambda}$ be a highest weight vector of $L(\Lambda)$.
			Then $\Omega(L(\Lambda^{(2i-1)}))$ 
			has the following basis for $i=1,2$,
			\begin{align*}
				\{Z_{i_{1}}\cdots{Z_{i_{l}}}v_{\Lambda^{(2i-1)}}
				\mid l\ge0,\ 
				(-i_{1},\dotsc,-i_{l})\in R_{i}
				\},
			\end{align*}
			where the sets $R_{i}$'s are defined in (\ref{RRS1}) and (\ref{RRS2}).
		\end{thm}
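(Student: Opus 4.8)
The plan is to deduce Theorem \ref{Thm7th} from Theorem \ref{MT} by a straightening argument on $Z$-monomials, followed by a graded-dimension count against the principal character (\ref{Angelica}). Throughout, $A^{(2)}_{7}$ means $s=4$, so the operators occurring in (\ref{MT'}) are $Z_{2i}=Z_{2i}(\alpha_{1})$ and $Z_{2i-1}=Z_{2i-1}(\alpha_{4})$, and the weights to treat are $\Lambda^{(1)}=\Lambda_{0}+\Lambda_{1}$ and $\Lambda^{(3)}=\Lambda_{3}$. Theorem \ref{MT} already provides a spanning set of $\Omega(L(\Lambda^{(2i-1)}))$ consisting of the monomials $Z_{i_{1}}\cdots Z_{i_{l}}v_{\Lambda^{(2i-1)}}$ with $i_{1}\le\cdots\le i_{l}\le-1$, that is, indexed by arbitrary partitions $(-i_{1},\dots,-i_{l})$; the goal is to reduce this to the sub-family indexed by $R_{i}$.

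The first step is the reduction to the difference-$2$ condition defining $R_{1}$. I would fix the well-founded order on $Z$-monomials obtained by refining the Lepowsky--Wilson length-and-$T$ order (Definition \ref{OrderT}) by the number of odd indices, as in (\ref{order}), and argue by induction: whenever a monomial in the Theorem \ref{MT} spanning set has an adjacent pair with $i_{k+1}-i_{k}\in\{0,1\}$ (so that the associated partition violates the $R_{1}$ inequality $\lambda_{i}-\lambda_{i+1}\ge2$), it lies in the span of monomials strictly smaller in that order. This is exactly what the generalized commutation relations give once specialized to $s=4$: Proposition \ref{preo} rewrites an adjacent pair of indices differing by $1$ (one even, one odd), Proposition \ref{pree} rewrites a repeated even index $Z_{2a}Z_{2a}$, and Proposition \ref{proo} rewrites a repeated odd index $Z_{2a-1}Z_{2a-1}$; in each case the ``diagonal'' leading term of the relation is solved for in terms of monomials in which the offending pair is pushed apart at the same homogeneous degree $-(i_{1}+\cdots+i_{l})$, hence — once the boundary $i_{l}\le-1$ obstructs further spreading — in terms of shorter monomials coming from the constant/central terms, or in terms of monomials with strictly fewer odd indices. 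Since the homogeneous degree is preserved, the length can only decrease, and the number of odd indices is at most the length, the induction terminates, leaving a spanning set of $\Omega(L(\Lambda^{(2i-1)}))$ indexed by $R_{1}$.

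For $i=2$ one additionally discards the monomials whose smallest part is $1$. Such a monomial in the $R_{1}$-indexed spanning set has the form $Z_{i_{1}}\cdots Z_{i_{l-1}}(Z_{-1}v_{\Lambda^{(3)}})$, so it suffices to show $Z_{-1}v_{\Lambda^{(3)}}=0$; this is an initial condition, which follows from $L(\Lambda^{(3)}-\delta)\hookrightarrow L(\Lambda_{0})\otimes L(\Lambda_{1})$ and the vanishings $Z_{\textrm{odd}}(\alpha_{1}+\cdots+\alpha_{j})=Z_{\textrm{even}}(\alpha_{4})=0$ recalled in the introduction — equivalently, from the vanishing of $\Omega(L(\Lambda^{(3)}))$ in principal degree $1$, confirmed by the character below (for $i=1$ there is no such relation and the $R_{1}$-indexed set is kept). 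It then remains to upgrade spanning to basis: specializing (\ref{Angelica}) to $s=4$ and simplifying the Pochhammer symbols shows that the principal character of $L(\Lambda^{(1)})$ is $(q,q^{4};q^{5})^{-1}_{\infty}$ and that of $L(\Lambda^{(3)})$ is $(q^{2},q^{3};q^{5})^{-1}_{\infty}$; by the theory of $Z$-operators this is the graded dimension of $\Omega(L(\Lambda^{(2i-1)}))$ in the gradation assigning $Z_{i_{1}}\cdots Z_{i_{l}}v$ the degree $|(-i_{1},\dots,-i_{l})|$, and by the RRPT (MacMahon--Schur) these two products equal $\sum_{\lambda\in R_{1}}q^{|\lambda|}$ and $\sum_{\lambda\in R_{2}}q^{|\lambda|}$. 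Since the spanning sets produced in the previous two steps contain exactly one monomial of degree $|\lambda|$ for each $\lambda\in R_{i}$, they meet the graded dimension of $\Omega$ and hence are bases.

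The main obstacle is the first step: one must verify that the three generalized commutation relations, at the precise numerics $s=4$, are strong enough to straighten every forbidden adjacency, and — more delicately — that refining the order by the odd-index count keeps the induction well-founded, controlling in particular how the length-lowering constant terms interact with the boundary constraint $i_{l}\le-1$ and with the annihilation relations on $L(\Lambda_{0})\otimes L(\Lambda_{1})$. It is precisely this bookkeeping that makes the $A^{(2)}_{7}$ case collapse to the clean Rogers--Ramanujan difference-$2$ condition, whereas the identical machinery applied in the $A^{(2)}_{5}$ case only reaches the more restrictive G\"ollnitz-type conditions $G_{i}$ of Theorem \ref{Thm5th}.
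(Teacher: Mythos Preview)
Your overall architecture is the same as the paper's: start from Theorem \ref{MT}, straighten away the forbidden adjacencies, handle the initial condition for $\Lambda^{(3)}$, and then upgrade to a basis by matching against the Rogers--Ramanujan product side of (\ref{Angelica}). That part is fine. The gap is in the straightening step itself, where you invoke Propositions \ref{pree}, \ref{preo}, \ref{proo} to eliminate the adjacencies $i_{k+1}-i_{k}\in\{0,1\}$.

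None of those three propositions actually delivers what you need. Proposition \ref{pree} is stated for $i>i'$, so it does \emph{not} rewrite a repeated even index $Z_{2a}Z_{2a}$; Proposition \ref{proo} is stated for $j>j'$, so it does \emph{not} rewrite a repeated odd index $Z_{2a-1}Z_{2a-1}$; and Proposition \ref{preo} treats $Z_{i}Z_{j}$ with the \emph{larger} index first, whereas in the ordered monomials of Theorem \ref{MT} you must kill $Z_{a}Z_{a+1}$ with the smaller index first (that case is Remark \ref{rmk}, not Proposition \ref{preo}). More seriously, even where Proposition \ref{proo} does apply it only places the rewritten monomial in $\SPAN((\tilde S(2;v)\cap\tilde T(j,j';v))\cup\tilde S(0;v))$: the $\tilde S(0;v)$ piece has no $T$-control at all, so your refined order (length, then $T$, then odd-count) is not well-founded as you use it --- the odd-count can drop while the $T$-position jumps arbitrarily, and you have not shown that the resulting purely-even monomials themselves can be straightened to $R_{1}$.

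What the paper does in \S\ref{7th} is precisely fill this gap with three $s=4$-specific computations, each proving the stronger statement $Z_{*}Z_{*}v\in T(*,*;v)$ (pure $T$-control, no $S$): one for $Z_{j}Z_{j'}$ with $j>j'$ both odd, one for $Z_{j}Z_{j}$ with $j$ odd, and one for $Z_{i}Z_{i}$ with $i$ even. Each is obtained by extracting several coefficients from (\ref{thrm}) with suitable $(\alpha,\beta)$ and checking that an explicit $2\times 2$, $3\times 3$, or $4\times 4$ determinant in powers of the primitive $14$th root $\omega$ is nonzero. These determinant checks are the substance of the proof; they are what distinguishes $s=4$ (where the RR difference-$2$ condition emerges) from $s=3$ (where only the G\"ollnitz conditions survive) and from $s=5$ (where length-$3$ relations are needed). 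You have identified this as ``the main obstacle'' but have not carried it out.
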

		Here,  we recall that
		$Z_{2n-1}=Z_{2n-1}(\alpha_{3})$ (resp.  $Z_{2n-1}(\alpha_{4})$)
		and $Z_{2n}=Z_{2n}(\alpha_{1})$
		for $n\in\mathbb{Z}$ in type $A^{(2)}_{5}$ (resp.  $A^{(2)}_{7}$).
		
		The proofs of Theorem \ref{Thm5th} and \ref{Thm7th}
		are essentially the same as
		the works by Lepowsky-Wilson \cite{LW3} and Kanade \cite{Kan},
		in the point that 
		we arrange $Z$-monomials
		in the order of length and $T$.
		Our results are different from the previous works (\cite{Kan},  \cite{BM}) in 
		that we take $Z_{\textrm{even}}(\alpha_{1})$ and $Z_{\textrm{odd}}(\alpha_{s})$ 
		as $Z$-operators ($s=3,4$).

	\subsubsection{Type $A^{(2)}_{9}$ level 2}
		For type $A^{(2)}_{9}$,  the product sides are as follows (See (\ref{Angelica})),
	\begin{align*}
		\chi(\Lambda^{(1)})
		&=
		\frac{1}{(q,q^{4},q^{6},q^{8},q^{11};q^{12})_{\infty}},\\
		\chi(\Lambda^{(3)})
		&=
		\frac{(q^{6};q^{12})_{\infty}}{(q^{2},q^{3},q^{4},q^{8},q^{9},q^{10};q^{12})_{\infty}},\\
		\chi(\Lambda^{(5)})
		&=
		\frac{1}{(q^{4},q^{5},q^{6},q^{7},q^{8};q^{12})_{\infty}}.
	\end{align*}
		Kanade-Russell \cite{KR} made conjectures of the sum side,
		which were proven by Bringmann et al \cite{BJM} and Rosengren \cite{Ros}.
		The conjectured sum side partition sets,
		$(K,) K_{1},K_{2}$ and $K_{3}$ are as follows \cite[\S3.1]{KR},
		\begin{align*}
				K&=\{(\lambda_{1},\dotsc,\lambda_{l})\in\textrm{Par}\mid
				l\ge0, \\*
				&\qquad(\textrm{k}1)\ \lambda_{i}-\lambda_{i+1}\neq1\textrm{ for }1\le i\le l-1,\\*
				&\qquad(\textrm{k}2)\ 
				\textrm{if }\lambda_{i}\in\mathbb{Z}\backslash2\mathbb{Z}, 
				\textrm{ then }\lambda_{i}\neq\lambda_{i+1}\textrm{ for }1\le i\le l-1,\\*
				&\qquad(\textrm{k}3)\ 
				\textrm{if } \lambda_{j+1}\in2\mathbb{Z}\textrm{ appears more than once, }\\*
				&
				\hspace{1.5cm}
				\textrm{then }
				\lambda_{j}-\lambda_{j+2}\ge4 \textrm{ for }2\le j\le l-2
				\},\\
				K_{1}&=\{(\lambda_{1},\dotsc,\lambda_{l})\in K\mid
				(\lambda_{l-1},  \lambda_{l})\neq(2,  2)\},\\
				K_{2}&=\{(\lambda_{1},\dotsc,\lambda_{l})\in K\mid\lambda_{l}\ge2\},\\
				K_{3}&=\{(\lambda_{1},\dotsc,\lambda_{l})\in K\mid\lambda_{l}\ge4\}.
		\end{align*}

		We reduce the $Z$-monomial spanning sets 
		to correspond to conjectures of integer partitions by Kanade-Russell.
		\begin{thm}\label{A29thm}
			Consider the case of type $A^{(2)}_{9}$.
			Let $v_{\Lambda}$ be a highest weight vector of $L(\Lambda)$.
			Then 
			$\Omega(L(\Lambda^{(2i-1)}))$ for $i=1,2,3$
			has the following basis,
			\begin{align*}
				\{Z_{i_{1}}\cdots{Z_{i_{l}}}v_{\Lambda}
				\mid l\ge0,\ (-i_{1},\dotsc,-i_{l})\in K_{i}
				\}.
			\end{align*}
			Here, 
			$Z_{2n-1}=Z_{2n-1}(\alpha_{5})$ and $Z_{2n}=Z_{2n}(\alpha_{1})$
			for $n\in\mathbb{Z}$.
		\end{thm}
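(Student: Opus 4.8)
The plan is to deduce Theorem \ref{A29thm} from Theorem \ref{MT} by iterated straightening of the $Z$-monomial spanning set, and then to upgrade the reduced spanning set to a basis by a graded-dimension count. I specialize to $A^{(2)}_9$ (so $s=5$), keep $Z_{\mathrm{even}}=Z_{\mathrm{even}}(\alpha_1)$ and $Z_{\mathrm{odd}}=Z_{\mathrm{odd}}(\alpha_5)$ as in Theorem \ref{MT} (legitimate by Proposition \ref{beta_n}), and order $Z$-monomials lexicographically by the triple (length, the invariant $T$ of Definition \ref{OrderT}, number of odd indices), as in (\ref{order}) --- the order of \cite{LW3} and \cite{Kan} refined by the odd-index count. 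By Theorem \ref{MT}, the set $\{Z_{i_1}\cdots Z_{i_l}v_{\Lambda^{(2i-1)}}: i_1\le\cdots\le i_l\le-1\}$, indexed by all integer partitions $(-i_1,\dots,-i_l)$, spans $\Omega(L(\Lambda^{(2i-1)}))$. The first task is to show that any such monomial whose index partition violates one of the Kanade--Russell conditions is, modulo strictly smaller monomials in the chosen order, a linear combination of monomials indexed by $K_i$; since each weight space carries only finitely many monomials and the order well-orders them there, the rewriting terminates and the monomials indexed by $K_i$ span $\Omega(L(\Lambda^{(2i-1)}))$.

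The rewriting relations come in two families. The interior relations are the $A^{(2)}_9$ specializations of the generalized commutation relations of Propositions \ref{pree}, \ref{preo} and \ref{proo} already established in the proof of Theorem \ref{MT}: the even--odd relation (Proposition \ref{preo}) rewrites a product $Z_nZ_{n+1}$ of operators with consecutive indices and yields condition (k1); the odd--odd relation (Proposition \ref{proo}) rewrites the square $Z_{2n-1}^2$ of an $\alpha_5$-operator and yields (k2); the even--even relation (Proposition \ref{pree}) governs configurations of repeated even indices and, after iteration, yields (k3). As in \cite{LW3} and \cite{Kan}, one must first bring the relevant operators into adjacency and then check that each application of a relation produces only strictly smaller monomials; the genuinely new feature here is that (k3) is a three-term condition, so it is reached from finitely many of the two-term moves of Proposition \ref{pree} once the difference-one and odd-repeat configurations covered by (k1) and (k2) have been removed --- it is to keep this process monotone that the number of odd indices is added to the order. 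The initial relations record the action of the lowest-degree $Z$-operators on the highest weight vector $v_{\Lambda^{(2i-1)}}$; they follow from the weight structure of the level $2$ standard module, the embedding $L(\Lambda^{(2i-1)}-(i-1)\delta)\hookrightarrow L(\Lambda_0)\otimes L(\Lambda_1)$ of \cite[Theorem 4.6]{BM2}, and the vanishing of $Z_{\mathrm{odd}}(\alpha_1+\cdots+\alpha_j)$ and $Z_{\mathrm{even}}(\alpha_5)$ on that tensor product. I expect to obtain $Z_{-2}^2v_{\Lambda^{(1)}}=0$, $Z_{-1}v_{\Lambda^{(3)}}=0$, and $Z_{-1}v_{\Lambda^{(5)}}=Z_{-2}v_{\Lambda^{(5)}}=Z_{-3}v_{\Lambda^{(5)}}=0$: because $Z_{-1},Z_{-2},Z_{-3}$ are the largest-index operators available, these vanishings are precisely the conditions separating $K_1$, $K_2$, $K_3$ from the common set $K$ --- they force, respectively, that the last two parts are not $(2,2)$, that $\lambda_l\ge2$, and that $\lambda_l\ge4$. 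Combining the interior and initial relations reduces the spanning set of Theorem \ref{MT} to one indexed by $K_i$.

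For linear independence I would compare graded dimensions for the principal gradation. Since $Z_{i_1}\cdots Z_{i_l}v_{\Lambda^{(2i-1)}}$ has principal degree $-(i_1+\cdots+i_l)=|(-i_1,\dots,-i_l)|$, the generating function of the reduced spanning set is exactly $\sum_{\lambda\in K_i}q^{|\lambda|}$. On the other hand, $L(\Lambda^{(2i-1)})$ is, as a principally graded space, the Fock module of the principal Heisenberg subalgebra tensored with $\Omega(L(\Lambda^{(2i-1)}))$, so the principal character (\ref{Angelica}) equals the known Heisenberg generating function times the graded dimension of $\Omega(L(\Lambda^{(2i-1)}))$; combining this with the Kanade--Russell product identities --- conjectured in \cite{KR} and proven in \cite{BJM} and \cite{Ros} --- identifies the graded dimension of $\Omega(L(\Lambda^{(2i-1)}))$ with $\sum_{\lambda\in K_i}q^{|\lambda|}$. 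Since each graded piece of $\Omega(L(\Lambda^{(2i-1)}))$ is finite-dimensional and is spanned by the finitely many reduced monomials of that degree, the equality of the two generating functions forces those monomials to be linearly independent in each degree, hence a basis.

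The main obstacle is the reduction step: assembling a complete set of interior straightening relations --- in particular controlling the three-term condition (k3), which has no exact counterpart in \cite{LW3} or \cite{Kan} --- and verifying that every rewriting strictly decreases the length/$T$/odd-index order so that the straightening terminates and lands exactly on $K_i$. A secondary difficulty is pinning down the precise initial relations on $v_{\Lambda^{(1)}}$, $v_{\Lambda^{(3)}}$ and $v_{\Lambda^{(5)}}$ that account for the passage from $K$ to $K_1$, $K_2$ and $K_3$, which requires a careful analysis of the bottom degrees of each standard module inside $L(\Lambda_0)\otimes L(\Lambda_1)$.
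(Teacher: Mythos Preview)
Your overall architecture---start from Theorem \ref{MT}, straighten monomials using relations that strictly decrease the (length, $T$, odd-count) order, then match graded dimensions via the Kanade--Russell identities---is exactly the paper's. The handling of (k1), (k2), the initial conditions, and the linear-independence step is correct in outline (minor point: $Z_{-2}Z_{-2}v_{\Lambda^{(1)}}$ is not literally $0$ but lies in $T(-2,-2;v_{\Lambda^{(1)}})$, which is what you need).

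The genuine gap is your treatment of (k3). You write that the even--even relation of Proposition \ref{pree} ``governs configurations of repeated even indices and, after iteration, yields (k3).'' But Proposition \ref{pree} says nothing about $Z_iZ_i$ for even $i$: it only lets you reorder $Z_iZ_{i'}$ with $i>i'$. In $A^{(2)}_9$ a repeated even part is \emph{allowed} in $K$, so there is no two-operator relation to iterate, and no finite chain of two-term moves will produce a relation expressing $Z_iZ_iZ_iv$, $Z_iZ_iZ_{i\pm 2}v$, or $Z_iZ_iZ_{i\pm 3}v$ as higher monomials. This is precisely the point at which the $A^{(2)}_9$ case diverges from $A^{(2)}_5$ and $A^{(2)}_7$.

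What the paper does instead is bring the auxiliary root $\beta_2=\alpha_1+\alpha_2$ back into play. One first proves $(d(i)Z_iZ_i-Z_{2i}(\beta_2))v\in T''(i,i;v)$ and an analogous relation for $Z_iZ_{i+2}$ (Lemmas \ref{lem1}, \ref{lem2}), so that a single $Z_{\cdot}(\beta_2)$ stands in for the repeated-even pair. The three-term relations for (k3) are then obtained from the generalized commutation relations for the pairs $(\alpha_1,\beta_2)$ and $(\alpha_5,\beta_2)$: extracting suitable coefficients gives linear relations among products $Z_{\cdot}(\alpha_1)Z_{\cdot}(\beta_2)$ (resp.\ $Z_{\cdot}(\alpha_5)Z_{\cdot}(\beta_2)$), and substituting back via Lemmas \ref{lem1}, \ref{lem2} converts these into the desired length-three relations (Propositions \ref{Austin}, 4.8, \ref{Butler}, with Lemmas \ref{lem3}, \ref{lem4} handling the mixed even--even--odd case). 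Your proposal is missing this mechanism; without it the straightening for (k3) does not go through.
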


		For the partition condition $(\textrm{k}3)$ of $K$,
		we rewrite the products,
		$Z_{\textrm{even}}(\alpha_{1}+\alpha_{2})Z_{\textrm{even}}(\alpha_{1})$,
		$Z_{\textrm{even}}(\alpha_{1})Z_{\textrm{even}}(\alpha_{1}+\alpha_{2})$,
		$Z_{\textrm{even}}(\alpha_{1}+\alpha_{2})Z_{\textrm{odd}}(\alpha_{s})$,
		and
		$Z_{\textrm{odd}}(\alpha_{s})Z_{\textrm{even}}(\alpha_{1}+\alpha_{2})$
		in the form of length 3 $Z$-monomials
		(see \S\ref{3contie} and \S\ref{conti_odd}).
		See also for example \cite{LW4} and \cite{T}
		for the relations among contiguous parts.

	\subsection{Future direction}
		For type $A^{(2)}_{11}$ level 2, 
		we obtain the same infinite products as in the case of type $A^{(2)}_{2}$ level 4. 
		Nandi \cite[pp.3-4]{Nan} made a conjecture of partition conditions
		for type $A^{(2)}_{2}$ level 4 
		as follows;
		for $(\lambda_{1},\lambda_{2},\dotsc,\lambda_{l})\in\textrm{Par}$,

		\begin{description}
			\item[N1]
				$\lambda_{i}-\lambda_{i+1}\neq1$ for $1\le i\le l-1$,
			\item[N2]
				$\lambda_{i}-\lambda_{i+2}\ge3$ for $1\le i\le l-2$,
			\item[N3]
				$\lambda_{i}-\lambda_{i+2}=3\Rightarrow\lambda_{i}\neq\lambda_{i+1}$
				for $1\le i\le l-2$,
			\item[N4]
				($\lambda_{i}-\lambda_{i+2}=3$ and $\lambda_{i}\notin2\mathbb{Z})		
				\Rightarrow\lambda_{i+1}\neq\lambda_{i+2}$ for $1\le i\le l-2$,
			\item[N5]
				($\lambda_{i}-\lambda_{i+2}=4$ and $\lambda_{i}\notin2\mathbb{Z})
				\Rightarrow(\lambda_{i}\neq\lambda_{i+1}$
				and $\lambda_{i+1}\neq\lambda_{i+2}$)
				for $1\le i\le l-2$,
			\item[N6]
				$(\lambda_{1}-\lambda_{2},  \lambda_{2}-\lambda_{3},\dotsc,
				\lambda_{l-1}-\lambda_{l})\neq(3,2^{*},3,0)$.
				\quad(Here,  $2^{*}$ is a sequence of non-negative number of 2.)
		\end{description}
		It is conceivable that the generating set in Theorem \ref{MT} can be reduced
		to satisfy these conditions by Nandi in the case of $A^{(2)}_{11}$. 
		In fact, to the spanning set in Theorem \ref{MT},  we can add a partition condition
		that the adjacent parts do not differ by 1, 
		which is included in the partition conditions of the subsets $G_{i}$'s,  $R_{i}$'s,  $K_{i}$'s, 
		and is just same as the condition N1 above.
		For detail,  see Remark \ref{rmk}.
		In addition,  Takigiku-Tsuchioka \cite{TT} made a conjecture of RR type identities
		in the case of $A^{(2)}_{13}$. 
		We expect that the RR type identities and partition theorems are found 
		in the cases of $A^{(2)}_{\textrm{odd}}$ with larger odd numbers.

	\subsection*{Organization}
		First,	in \S\ref{Prince},  following \cite[\S6]{Fi}, 
		we review the principal realization of type $A^{(2)}_{\textrm{odd}}$ affine Lie algebras
		(cf. \cite[Chapter 7-8]{Kac}). In \S\ref{Zop}, following \cite[\S7]{Fi} and \cite[\S3]{LW3}, 
		we review the definition of $Z$-operators. 
		Then in \S\ref{MTpf}, we prove Theorem \ref{MT}.
		In \S\ref{5th} and \S\ref{7th},
		we 
		prove Theorem \ref{Thm5th} and \ref{Thm7th}.
		Regarding the case of type $A^{(2)}_{9}$, 
		we prove Theorem \ref{A29thm} in \S\ref{9th}. 

	\subsection*{Acknowledgments}
		The author is grateful to Shunsuke Tsuchioka for his support and advice.
		This work is supported by RIKEN Junior Research Associate Program.

%

\section{Preliminaries}
%
%
	In this section, 
	we review the principal realization of
	the twisted affine Lie algebras of type $A^{(2)}_{\textrm{odd}}$
	and $Z$-operators following \cite{Fi} and \cite{KR}.

%
%

	\subsection{The principal realization of the affine Lie algebras
	$\hat{\mathfrak{g}}(\nu)$ and $\tilde{\mathfrak{g}}(\nu)$}\label{Prince}
		Let $s\ge3$ be a positive integer, and denote the root system of type $A_{2s-1}$ by $\Phi$. 
		Let $\Delta=\{\alpha_{1},\dotsc,\alpha_{2s-1}\}$ be the set of simple roots.
		Define $Q=\bigoplus^{2s-1}_{i=1}\mathbb{Z}\alpha_{i}$
		as the root lattice with the non-degenerate symmetric definite bilinear form
		$\langle\cdot,\cdot\rangle$ defined by
		\begin{align*}
			\langle\alpha_{i},\alpha_{j}\rangle
			=
			\begin{cases}
				2 & (\textrm{if}\hspace{2mm}i=j),\\
				-1 & (\textrm{if}\hspace{2mm}|i-j|=1),\\
				0 & (\textrm{otherwise}),\\
			\end{cases}
		\end{align*}
		for $1\le{i,j}\le2s-1$. 


		We denote by $\nu$ the twisted Coxeter automorphism (cf. \cite[\S8.2]{Fi})
		given by $\nu=\sigma_{1}\cdots\sigma_{s}\sigma$,
		where $\sigma$ is the Dynkin diagram involution 
		given by $\sigma(\alpha_{i})=\alpha_{2s-i}$ for $1\le i\le 2s-1$,
		and $\sigma_{i}$ is the reflection of $\alpha_{i}$ for $1\le i\le s$
		given by $\sigma_{i}(\alpha)=\alpha-\langle\alpha,\alpha_{i}\rangle\alpha_{i}$.
		The order of $\nu$ is $m=2(2s-1)$ (cf.  \cite[Proposition 8.2(2)]{Fi}). 
		The set of roots $\Phi$ breaks into $s$ $\nu$-orbits
		with a complete system of representatives
		$\Phi'=\{\beta_{i}\mid1\le{i}\le{s-1}\}\cup\{\alpha_{s}\}$
		where $\beta_{i}=\alpha_{1}+\alpha_{2}+\cdots+\alpha_{i}$. 
		The table below shows the $\nu$-orbits of $\alpha_{1}$ and $\alpha_{s}$.
		We omit $\nu^{i}\alpha_{1}$ and $\nu^{i}\alpha_{s}$ for $2s-1\le i\le 4s-3$,
		as $\nu^{2s-1}=-\textrm{id}$.
		This table is used for calculations in \S\ref{MTpf} and \S\ref{concrete579}. 
		\begin{table}[h]
			\caption{$\nu$-orbits of $\alpha_{1}$ and $\alpha_{s}$}
			\centering
			\begin{tabular}{|c||c|c|}\hline
				id & $\alpha_{1}$ & $\alpha_{s}$\\ 
				$\nu$ & $\alpha_{2s-1}$ & $-\alpha_{1}-\cdots-\alpha_{s}$\\ 
				$\nu^{2}$ & $\alpha_{2}$ & $-\alpha_{s+1}-\cdots-\alpha_{2s-1}$\\ 
				$\vdots$ & $\vdots$ & $\vdots$\\ 
				$\nu^{2k-1}$ & $\alpha_{2s-k}$ & $-\alpha_{k}-\cdots-\alpha_{s}$\\ 
				$\nu^{2k}$ & $\alpha_{k+1}$ & $-\alpha_{s+1}-\cdots-\alpha_{2s-k}$\\ 
				$\vdots$ & $\vdots$ & $\vdots$\\ 
				$\nu^{2s-4}$ & $\alpha_{s-1}$ & $-\alpha_{s+1}-\alpha_{s+2}$\\ 
				$\nu^{2s-3}$ & $\alpha_{1}+\cdots+\alpha_{s+1}$ & $-\alpha_{s-1}-\alpha_{s}$\\ 
				$\nu^{2s-2}$ & $\alpha_{s}+\cdots+\alpha_{2s-1}$ & $-\alpha_{s+1}$\\ \hline
			\end{tabular}
		\end{table}\\


		Let $\omega$ be a primitive $m$-th root of unity.
		As in \cite[\S6]{Fi}, define $\varepsilon:Q\times{Q}\rightarrow\mathbb{C}^{\times}$ by
		\begin{align*}
			\varepsilon(\alpha,\beta)
			=
			\prod_{p=1}^{m-1}(1-\omega^{-p})^{\langle\nu^{p}\alpha,\beta\rangle},
		\end{align*}
%
%
		and denote the complexification of the root lattice $Q$
		by $\mathfrak{a}$, i.e., $\mathfrak{a}=\mathbb{C}\otimes{Q}$. 
		Construct a finite dimensional simple Lie algebra $\mathfrak{g}$ of type $A_{2s-1}$
		using $\langle\cdot,\cdot\rangle$ and $\varepsilon$ by
		\begin{align*}
			\mathfrak{g}
			&
			=
			\mathfrak{a}\oplus\left(\bigoplus_{\alpha\in\Phi}\mathbb{C}x_{\alpha}\right)
			,\\
			{[\alpha_{i},x_{\alpha}]}
			=\langle\alpha_{i},\alpha\rangle{x_{\alpha}},&\qquad
			[x_{\alpha},x_{\beta}]
			=
			\begin{cases}
				\varepsilon(-\alpha,\alpha)\alpha 
				&(\textrm{if}\hspace{2mm}\langle\alpha,\beta\rangle=-2),\\	
				\varepsilon(\alpha,\beta)x_{\alpha+\beta} 
				&(\textrm{if}\hspace{2mm}\langle\alpha,\beta\rangle=-1),\\
				0 
				&(\textrm{otherwise}),
			\end{cases}
		\end{align*}
		where $x_{\alpha}$ is a symbol associated with $\alpha\in\Phi$ (cf.  \cite[p.21]{Fi}).
		We extend $\langle\cdot,\cdot\rangle$ and $\nu$ from $Q$ to $\mathfrak{g}$
		by (see \cite[p.22]{Fi})
		\begin{align*}
			\langle\alpha_{i},x_{\beta}\rangle
			=0,\qquad
			\langle{x_{\alpha}},x_{\beta}\rangle
			=\varepsilon(\alpha,\beta)\delta_{\alpha+\beta,0},\qquad
			\nu{x_{\alpha}}=x_{\nu\alpha}.
		\end{align*}


		For $i\in\mathbb{Z}$, 
		we define $\mathfrak{g}_{(i)}\subseteq\mathfrak{g}$
		as $\omega^{i}$-eigenspace of $\nu$, i.e.,
		\begin{align*}
			\mathfrak{g}_{(i)}
			=\{x\in\mathfrak{g}
			\mid\nu{x}=\omega^{i}x\}.
		\end{align*}

		Following \cite[pp.22-23]{Fi} (cf. \cite[Chapter 7-8]{Kac}), 
		construct the $\nu$-twisted affinizations
		$\hat{\mathfrak{g}}(\nu)$ and $\tilde{\mathfrak{g}}(\nu)$ by
		\begin{align*}
			&\hat{\mathfrak{g}}(\nu)
			=
			(\bigoplus_{i\in\mathbb{Z}}
			\mathfrak{g}_{(i)}\otimes{t^{i}})
			\oplus\mathbb{C}c,\qquad
			\tilde{\mathfrak{g}}(\nu)
			=
			\hat{\mathfrak{g}}(\nu)\oplus\mathbb{C}d,\\
			&{[x\otimes{t^{i}},y\otimes{t^{j}}]}
			=[x,y]\otimes{t^{i+j}}
			+\frac{1}{m}i\delta_{i+j,0}\langle{x},y\rangle{c},\\
			&[c,\tilde{\mathfrak{g}}(\nu)]=\{0\},\qquad
			[d,x\otimes{t^{i}}]=ix\otimes{t^{i}}.
		\end{align*}
		


		\begin{dfn}\cite[p.220]{LW3}
			Define the category $C_{k}$ as the category 
			consisting of $\tilde{\mathfrak{g}}(\nu)$-modules $V$,
			which satisfy
			\begin{enumerate}
				\item $c$ acts on $V$ as the scalar $k$,  i.e.,  $V$ has level $k$,
				\item $V=\coprod
				_{z\in\mathbb{C}}V_{z}$,
				\item For all $z\in\mathbb{C}$,  there exsists $i\in{\mathbb{Z}}_{\ge0}$, 
				such that $V_{z+j}=\{0\}$ for all $j>i$.
			\end{enumerate}
			Here,  $V_{z}=\{v\in{V}\mid{d\cdot{v}}=zv\}$.
		\end{dfn}


		The principal Heisenberg subalgebra
		$\hat{\mathfrak{a}}(\nu)\subseteq\tilde{\mathfrak{g}}(\nu)$
		is defined by (cf.  \cite[p.5]{Fi})
		\begin{align*}
			\hat{\mathfrak{a}}(\nu)
			=
			\bigoplus_{i\in\mathbb{Z}}(\mathfrak{a}\cap\mathfrak{g}_{(i)})
			\otimes{t^{i}}\oplus\mathbb{C}c.
		\end{align*}

		For $L\in{C_{k}}$,
		denote the vacuum space of $L$ by $\Omega(L)$, 
		which is the set of highest weight vectors of $\hat{\mathfrak{a}}(\nu)$, 
		i.e., $\Omega(L)=\{v\in{L}\mid\hat{\mathfrak{a}}_{+}(\nu)v=\{0\}\}$ \cite[p.28]{Fi}.
		If $L$ is standard,
		the principal character of $\Omega(L)$ is derived using Lepowsky numerator formula
		\cite[p.183]{LM}.

	\subsection{$Z$-operator}\label{Zop}

		Let $p_{i}:\mathfrak{g}\rightarrow\mathfrak{g}_{(i)}$ be
		the $i$-th projection. 
		For $x\in\mathfrak{g}$, 
		we define $x_{(i)}$ by $x_{(i)}=p_{i}(x)$.
		\begin{dfn}\label{Zdef} \cite[pp.221-222]{LW3}
			Let $L\in{C_{k}}$. 
			For $\alpha\in\Phi$, 
			define $X(\alpha,\zeta),
			E^{\pm}(\alpha,\zeta),
			Z(\alpha,\zeta)
			\in(\textrm{End}\ L)\{\zeta,\zeta^{-1}\}
			$ 
			as follows,
			\begin{align*}
				X(\alpha,\zeta)
				&=\sum_{n\in\mathbb{Z}}
				((x_{\alpha})_{(n)}\otimes{t^{n}})\zeta^{n},\\
				E^{\pm}(\alpha,\zeta)
				&=\exp\left(\pm{m}\sum_{n\ge1}(\alpha_{(\pm{n})}\otimes{t^{\pm{n}}})
				\zeta^{\pm{n}}/nk\right),\\
				Z(\alpha,\zeta)
				&=E^{-}(\alpha,\zeta)
				X(\alpha,\zeta)
				E^{+}(\alpha,\zeta).
			\end{align*}
			For $i\in\mathbb{Z}$, 
			we define $Z_{i}(\alpha)$ as the coefficient of $\zeta^{i}$ in $Z(\alpha, \zeta)$, 
			i.e., $Z(\alpha, \zeta)=\sum_{i\in\mathbb{Z}}Z_{i}(\alpha)\zeta^{i}$.
		\end{dfn}
		It is known that the vacuum space $\Omega(L)$ of 
		a highest weight module $L$ is generated by (cf. \cite[Theorem 7.1]{LW3})
		\begin{align}\label{genn}
			\{Z_{i_{1}}(\gamma_{1})\cdots{Z_{i_{l}}}(\gamma_{l})v_{L}\mid
			l\ge0,\ i_{j}\in\mathbb{Z}\mbox{ and }\gamma_{j}\in\Phi \mbox{ for }1\le i\le l\},
		\end{align}
		where $v_{L}$ is a highest weight of $L$.
		We have $Z(\nu^{p}\alpha,\zeta)=Z(\alpha,\omega^{p}\zeta)$ from \cite[Proposition 7.2]{Fi}, 
		thus we replace $\Phi$ to $\Phi'$,
		where $\Phi'$ is the complete system of representatives
		by $\nu$, $\Phi'=\{\beta_{i}\mid1\le{i}\le{s-1}\}\cup\{\alpha_{s}\}$.
		In addition,  from the highest weight property of $v_{L}$, 
		we can reduce the range of the indices from (\ref{genn}) to (cf.  \cite[\S 6]{Kan}),
		\begin{align}
			&\{Z_{i_{1}}(\gamma_{1})\cdots{Z_{i_{l}}}(\gamma_{l})v_{L}\mid
			l\ge0,\ i_{j}\in\mathbb{Z}\mbox{ and }
			\gamma_{j}\in\Phi'\mbox{ for }1\le i\le l,\notag\\*
			&\hspace{5cm}
			(i_{1},\dotsc,i_{l})\le_{T}(0,\dotsc,0)
			\},\label{reduced}
		\end{align}
		where the order $\le_{T}$ is defined as follows. 
		\begin{dfn}\label{OrderT}\cite[\S4]{Kan}
			Define the partial order $\le_{T}$ on $\mathbb{Z}^{l}$
			for any positive integer $l>0$,  as follows.
			For tuples $(i_{1},\dotsc,i_{l}), (j_{1},\dotsc,j_{l})\in\mathbb{Z}^{l}$,
			\begin{align*}
				(i_{1},\dotsc,i_{l})\le_{T}(j_{1},\dotsc,j_{l})
				\stackrel{\rm{def}}{\Leftrightarrow}
				i_{l'}+\cdots+i_{l}\le{j_{l'}+\cdots+j_{l}}\ 
				\textrm{for}\ 1\le{l'}\le{l}.
			\end{align*}
		\end{dfn}

		In \S3 and \S4,  we reduce the range of the indices and the roots to prove 
		Theorem 1.1 to 1.4.
%
%
		We use the following theorem to rewrite the adjacent $Z$-operators.
		\begin{thm} (Generalized commutation relations \cite[Theorem 3.10]{LW3})
			Let $L\in{C_{k}}$,  and
			$\zeta_{1}$ and $\zeta_{2}$ be commuting formal variables
			and let $\alpha, \beta\in\Phi$. Then,  we have
			\begin{align}
				&\prod_{p\in\mathbb{Z}/m\mathbb{Z}}
				(1-\omega^{-p}\zeta_{1}/\zeta_{2})^{\langle\nu^{p}\alpha,\beta\rangle/k}
				Z(\alpha,\zeta_{1})
				Z(\beta,\zeta_{2})\notag\\*
				&\qquad
				-\prod_{p\in\mathbb{Z}/m\mathbb{Z}}
				(1-\omega^{-p}\zeta_{2}/\zeta_{1})^{\langle\nu^{p}\beta,\alpha\rangle/k}
				Z(\beta,\zeta_{2})
				Z(\alpha,\zeta_{1})\notag\\*
				&=\frac{1}{m}
				\sum_{q\in{C_{-1}(\alpha,\beta)}}
				\varepsilon(\nu^{q}\alpha,\beta)
				Z(\nu^{q}\alpha+\beta,\zeta_{2})
				\delta(\omega^{-q}\zeta_{1}/\zeta_{2})\notag\\*
				&\qquad+\frac{k}{m^{2}}
				\langle{x_{\alpha},x_{-\alpha}}\rangle
				\sum_{q'\in{C_{-2}(\alpha,\beta)}}
				(D\delta)(\omega^{-q'}\zeta_{1}/\zeta_{2}).\label{thrm}
			\end{align}
			where, for $i=-1,-2$,
			\begin{equation*}
				C_{i}=
				\{p\in\mathbb{Z}/m\mathbb{Z}
				\mid\langle\nu^{p}\alpha,\beta\rangle=i\}.
			\end{equation*}
		\end{thm}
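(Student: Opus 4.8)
The plan is to derive the identity from the formal-variable (vertex-operator) calculus: Baker--Campbell--Hausdorff manipulations of the Heisenberg exponentials $E^{\pm}$ together with the contraction rule for the twisted operators $X(\alpha,\zeta)$. All computations take place in $(\mathrm{End}\,L)\{\zeta_1,\zeta_1^{-1},\zeta_2,\zeta_2^{-1}\}$ with $\zeta_1,\zeta_2$ independent commuting formal variables, using the formal delta function $\delta(x)=\sum_{n\in\mathbb{Z}}x^n$, its derivative $D\delta$, and the standard identities $(1-x)\delta(x)=0$ and $f(x)\delta(x)=f(1)\delta(x)$ for Laurent polynomials $f$. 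The point that breaks the symmetry between $Z(\alpha,\zeta_1)Z(\beta,\zeta_2)$ and $Z(\beta,\zeta_2)Z(\alpha,\zeta_1)$ is that the rational function governing the contraction of the two vertex operators must be expanded in the region $|\zeta_1|>|\zeta_2|$ in the first case and $|\zeta_1|<|\zeta_2|$ in the second; the difference of the two expansions is supported at $\zeta_1\in\{\omega^{-q}\zeta_2\}$, which is the source of the $\delta$ and $D\delta$ terms.

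\emph{Step 1: reordering the Heisenberg factors.} Since $[\alpha_{(n)}\otimes t^n,\beta_{(-n)}\otimes t^{-n}]$ is central and $\mathrm{ad}(h)$ acts on each $x_\beta$ by the scalar $\langle h,\beta\rangle$, one moves $E^{+}(\alpha,\zeta_1)$ rightward past $X(\beta,\zeta_2)$ and $E^{-}(\beta,\zeta_2)$ (and similarly for the reversed product), repeatedly using $e^Ae^B=e^{[A,B]}e^Be^A$ when $[A,B]$ is central. Evaluating the resulting commutators via the projection formula $p_i=\tfrac1m\sum_{p}\omega^{-ip}\nu^p$ and $\sum_{n\ge1}x^n/n=-\log(1-x)$ shows that the accumulated scalar is precisely $\prod_{p}(1-\omega^{-p}\zeta_1/\zeta_2)^{-\langle\nu^p\alpha,\beta\rangle/k}$, i.e.\ the inverse of the factor attached to $Z(\alpha,\zeta_1)Z(\beta,\zeta_2)$ in the Theorem. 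Hence $\prod_{p}(1-\omega^{-p}\zeta_1/\zeta_2)^{\langle\nu^p\alpha,\beta\rangle/k}\,Z(\alpha,\zeta_1)Z(\beta,\zeta_2)$ equals a product in which all $E^{-}$'s stand to the left of all $E^{+}$'s, with $X(\alpha,\zeta_1)$ and $X(\beta,\zeta_2)$ adjacent but not yet contracted against one another; the analogous statement holds for the reversed product with the factor $\prod_{p}(1-\omega^{-p}\zeta_2/\zeta_1)^{\langle\nu^p\beta,\alpha\rangle/k}$.

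\emph{Step 2: contracting the two twisted vertex operators.} This is the technical core. Using the bracket $[x_\alpha,x_\beta]$ from the construction of $\mathfrak{g}$ recalled above (nonzero only when $\langle\alpha,\beta\rangle=-1$ or $-2$) and the eigenspace decomposition $\mathfrak{g}=\bigoplus_i\mathfrak{g}_{(i)}$, one shows that the adjacent pair $X(\alpha,\zeta_1)X(\beta,\zeta_2)$, once the $E^{\pm}$'s are restored, produces: (i) the common normal-ordered operator $:Z(\alpha,\zeta_1)Z(\beta,\zeta_2):$, multiplied by the rational function $\prod_p(\zeta_1-\omega^{-p}\zeta_2)^{\langle\nu^p\alpha,\beta\rangle}$ (suitably expanded); (ii) for each $q$ with $\langle\nu^q\alpha,\beta\rangle=-1$, so that $\nu^q\alpha+\beta\in\Phi$, a term carrying the cocycle factor $\varepsilon(\nu^q\alpha,\beta)$ and rebuilding $Z(\nu^q\alpha+\beta,\zeta_2)$; and (iii) for each $q'$ with $\langle\nu^{q'}\alpha,\beta\rangle=-2$, so that $\nu^{q'}\alpha=-\beta$, a scalar term proportional to $\tfrac{k}{m^2}\langle x_\alpha,x_{-\alpha}\rangle$. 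The terms (ii), (iii) are localized at $\zeta_1=\omega^{-q}\zeta_2$ because they come from the poles of $\prod_p(\zeta_1-\omega^{-p}\zeta_2)^{\langle\nu^p\alpha,\beta\rangle}$: a simple pole yields a $\delta$, a double pole a $D\delta$. Since $A_{2s-1}$ is simply laced, $\langle\nu^p\alpha,\beta\rangle\in\{2,1,0,-1,-2\}$ with $-2$ occurring only when $\nu^p\alpha=-\beta$, so exactly these two pole orders arise and only $\delta$ and $D\delta$ can appear.

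\emph{Step 3: subtraction, and the main obstacle.} Applying Steps 1--2 to both products and subtracting, the normal-ordered pieces cancel because $:Z(\alpha,\zeta_1)Z(\beta,\zeta_2):=:Z(\beta,\zeta_2)Z(\alpha,\zeta_1):$, and what survives is the difference between the $|\zeta_1|>|\zeta_2|$ and $|\zeta_1|<|\zeta_2|$ expansions of the same rational function, together with the surviving $Z(\nu^q\alpha+\beta,\zeta_2)$- and scalar-contributions. Collapsing these to $\zeta_2$ via $f(\zeta_1,\zeta_2)\delta(\omega^{-q}\zeta_1/\zeta_2)=f(\omega^{-q}\zeta_2,\zeta_2)\delta(\omega^{-q}\zeta_1/\zeta_2)$ and bookkeeping the constants yields exactly the right-hand side of the Theorem. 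I expect Step 2 to be the main obstacle: one must verify that the formal contraction reproduces the terms $\tfrac1m\varepsilon(\nu^q\alpha,\beta)Z(\nu^q\alpha+\beta,\zeta_2)$ and the central term with the stated normalizations, which requires careful control of the projections $p_i$, the $2$-cocycle $\varepsilon$ and its cocycle identity (relating $\varepsilon(\nu^q\alpha,\beta)$ to the structure constants of $\mathfrak{g}$), the order $m=2(2s-1)$ of $\nu$, and the level $k$, as well as the precise dictionary between rational functions expanded in opposite regions and the formal $\delta$-calculus.
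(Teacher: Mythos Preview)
The paper does not prove this theorem at all; it is quoted verbatim from Lepowsky--Wilson \cite[Theorem 3.10]{LW3} as a background tool and immediately followed by the next section. Your sketch is essentially the standard Lepowsky--Wilson argument (normal-ordering via the Heisenberg exponentials, contraction of the $X$-operators governed by the twisted rational function, and the $\delta$-calculus for the difference of the two expansions), so there is nothing to compare against in this paper, and your outline is correct in substance.
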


	%
	%

	\section{The case of $A^{(2)}_{\textrm{odd}}$ level 2}\label{MTpf}


		Recall that
		$\Lambda^{(n)}\coloneqq(\delta_{0,n}+\delta_{1,n})\Lambda_{0}+\Lambda_{n}$.
		From \cite[Theorem 4.6]{BM2},
		in the case of type $A^{(2)}_{2s-1}$, 
		we have
		for $1\le{2i-1}\le{s}$, 
			\begin{equation*}
				L(\Lambda^{(2i-1)}
				-(i-1)\delta)
				\subseteq{L(\Lambda_{0})\otimes{L(\Lambda_{1})}}.
			\end{equation*}
		On $L(\Lambda_{0})\otimes{L(\Lambda_{1})}$,  we have
			\begin{align}\label{tensor}
				Z_{2i+1}(\beta_{j})=Z_{2i}(\alpha_{s})=0
			\end{align}
		for $i\in\mathbb{Z}$ and $1\le{j}\le{s-1}$
		(See \cite[\S3]{Kan} \cite[p.279]{LW3} for the proof).
%
%
		As in the introduction,  define $Z_{i}$ for $i\in\mathbb{Z}$ by
		\begin{align*}
			Z_{i}
			=
			\begin{cases}
				Z_{i}(\alpha_{s}) & (i\in\mathbb{Z}\backslash2\mathbb{Z}),\\
				Z_{i}(\alpha_{1}) & (i\in2\mathbb{Z}).
			\end{cases}
		\end{align*} 
		
	\subsection{Auxiliary propositions}
		
		In this subsection, 
		let $v\in\Omega(L(\Lambda_{0})\otimes L(\Lambda_{1}))$.

		\begin{dfn}
			For $(i_{1},\dotsc,i_{l})\in\mathbb{Z}^{l}$ $(l\ge{1})$ and $n\ge0$, 
			define $\tilde T(i_{1},\dotsc,i_{l};v)$,  $T(i_{1},\dotsc,i_{l};v)$, $\tilde{S}(n;v)$
			and ${S}(n;i_{1},\cdots,i_{l};v)$ as follows,
			\begin{align}
				\tilde{T}(i_{1},\dotsc,i_{l};v)&=\{Z_{j_{1}}\cdots Z_{j_{l'}}v
				\mid l'<l,\ \mbox{or}\ (l=l'\ \mbox{and}\ (i_{1},\dotsc,i_{l})<_{T}(j_{1},\dotsc,j_{l'}))\},
				\notag\\
				T(i_{1},\dotsc,i_{l};v)&=
				\SPAN(\tilde T(i_{1},\dotsc,i_{l};v)),\label{OT}\\
				\tilde{S}(n;v)&=\{Z_{i_{1}}\cdots Z_{i_{l'}}v
				\mid l'\ge0,\ |\{i_{j}\in\mathbb{Z}\backslash2\mathbb{Z}
				\mid1\le j\le l'\}|=n\},\label{order}\\
				S(n;i_{1},\cdots,i_{l};v)
				&=\SPAN(\tilde{S}(n;v)\cap{\tilde{T}(i_{1},\dotsc,i_{l};v)}).\notag
			\end{align}
		\end{dfn}

		Let 
		$F(\alpha,\beta,\zeta)
		=\prod_{t\in\mathbb{Z}/m\mathbb{Z}}
		(1-\omega^{-t}\zeta)^{\langle\nu^{t}\alpha,\beta\rangle/2}$,
		and $c(\alpha,\beta,n)$ be the coefficient of $\zeta^{n}$ in $F(\alpha,\beta,\zeta)$.
		By comparing the coefficients of $\zeta_{1}^{a}\zeta_{2}^{b}$ for $a,b\in\mathbb{Z}$
		in (\ref{thrm}), 
		we get
		\begin{align*}
			&\sum_{p\ge0}c(\alpha,\beta,p)Z_{a-p}(\alpha)Z_{b+p}(\beta)-
			c(\beta,\alpha,p)Z_{b-p}(\beta)Z_{a+p}(\alpha)\notag\\*
			&=\frac{1}{m}\sum_{q\in{C_{-1}(\alpha,\beta)}}\varepsilon(\nu^{q}\alpha,\beta)						\omega^{-2qa}Z_{a+b}(\nu^{q}\alpha+\beta)+d(\alpha,\beta,a,b), 
		\end{align*}
		where $d(\alpha,\beta,a,b)$ is a constant determined by $\alpha,\beta,a,$ and $b$.
		

		\begin{prop}\label{pree}
			For $i,i'\in2\mathbb{Z}$ such that $i>i'$,  we have
			\begin{enumerate}[(a)]
				\item$Z_{i}Z_{i'}v
					\in{S}(0;i-1,i'+1;v)$,
				\item$Z_{i}Z_{i'}v
					\in{S}(0;i,i';v)$,
				\item$Z_{i+i'}(\beta_{2})v
					\in{S}(0;i-1,i'+1;v)$.
			\end{enumerate}
		\end{prop}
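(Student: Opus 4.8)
\medskip

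\noindent\emph{Proof sketch.}
The plan is to derive (a)--(c) from the coefficient form of the generalized commutation relation \eqref{thrm}, applied with $\alpha=\beta=\alpha_{1}$. First I would extract the needed $\nu$-orbit data from the table: $\nu^{t}\alpha_{1}$ runs over $\alpha_{1},\alpha_{2s-1},\alpha_{2},\alpha_{2s-2},\dotsc,\alpha_{1}+\cdots+\alpha_{s+1},\alpha_{s}+\cdots+\alpha_{2s-1}$ and their negatives, so for $s\ge3$ the only $t$ with $\langle\nu^{t}\alpha_{1},\alpha_{1}\rangle\neq0$ are $t\in\{0,2,2s-3,2s-1,2s+1,-2\}\pmod m$, with values $2,-1,1,-2,1,-1$ respectively. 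In particular $c(\alpha_{1},\alpha_{1},0)=F(\alpha_{1},\alpha_{1},0)=1$, and $C_{-1}(\alpha_{1},\alpha_{1})=\{2,-2\}$ with $\nu^{2}\alpha_{1}+\alpha_{1}=\beta_{2}$ and $\nu^{-2}\alpha_{1}+\alpha_{1}=\nu^{-2}\beta_{2}$. Using $Z_{n}(\nu^{p}\gamma)=\omega^{pn}Z_{n}(\gamma)$, the coefficient-extracted form of \eqref{thrm} in the variables $\zeta_{1}^{a}\zeta_{2}^{b}$ therefore reads
\begin{align*}
\sum_{p\ge0}c(\alpha_{1},\alpha_{1},p)\bigl(Z_{a-p}(\alpha_{1})Z_{b+p}(\alpha_{1})-Z_{b-p}(\alpha_{1})Z_{a+p}(\alpha_{1})\bigr)=C_{a,b}\,Z_{a+b}(\beta_{2})+d(\alpha_{1},\alpha_{1},a,b),
\end{align*}
where $C_{a,b}=\frac{1}{m}\bigl(\varepsilon(\alpha_{2},\alpha_{1})\omega^{-4a}+\varepsilon(\nu^{-2}\alpha_{1},\alpha_{1})\omega^{2a-2b}\bigr)$; I would check that $C_{a,b}\neq0$ for the specific pairs $(a,b)$ used below (if not, one replaces $(a,b)$ by another pair with $a+b=i+i'$ of the appropriate parity).

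Two reductions make everything finite and even. Since $\alpha_{1}=\beta_{1}$, \eqref{tensor} gives $Z_{2j+1}(\alpha_{1})v=0$, so every summand with $p$ of the wrong parity drops out; and since $v$ lies in the vacuum space of a module whose $d$-grading is bounded above, $Z_{n}(\alpha_{1})v=0$ for $n$ sufficiently positive, so the sum over $p$ is a finite sum when applied to $v$.

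For (c) take $a=i-1$, $b=i'+1$; then $a+b=i+i'$ and only odd $p$ survive, so
\begin{align*}
C_{i-1,i'+1}\,Z_{i+i'}(\beta_{2})v=\sum_{p\ \textrm{odd}}c(\alpha_{1},\alpha_{1},p)\bigl(Z_{i-1-p}Z_{i'+1+p}-Z_{i'+1-p}Z_{i-1+p}\bigr)v-d(\alpha_{1},\alpha_{1},i-1,i'+1)\,v.
\end{align*}
Each monomial on the right is a product of two even $Z(\alpha_{1})$'s, hence lies in $\tilde S(0;v)$, and a direct comparison of $(i-1-p,i'+1+p)$ and $(i'+1-p,i-1+p)$ with $(i-1,i'+1)$ for odd $p\ge1$ shows (using $i-i'\ge2$) that these tuples are $<_{T}$-larger; the last term has length $0$. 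Hence $Z_{i+i'}(\beta_{2})v\in S(0;i-1,i'+1;v)$, which is (c) (when $i=i'+2$ one has $a=b$, the left sum is empty, and $Z_{i+i'}(\beta_{2})v$ is just a scalar multiple of $v$). For (a) take $a=i$, $b=i'$; only even $p$ contribute, and separating off the $p=0$ term (coefficient $c(\alpha_{1},\alpha_{1},0)=1$) gives
\begin{align*}
Z_{i}Z_{i'}v=Z_{i'}Z_{i}v-\sum_{\substack{p\ge2\\ p\ \textrm{even}}}c(\alpha_{1},\alpha_{1},p)\bigl(Z_{i-p}Z_{i'+p}-Z_{i'-p}Z_{i+p}\bigr)v+C_{i,i'}\,Z_{i+i'}(\beta_{2})v+d(\alpha_{1},\alpha_{1},i,i')\,v.
\end{align*}
On the right, $Z_{i'}Z_{i}v$ and the monomials $Z_{i-p}Z_{i'+p}v$, $Z_{i'-p}Z_{i+p}v$ ($p\ge2$ even) are even length-$2$ monomials whose index tuples are $<_{T}$-above $(i-1,i'+1)$, the term $C_{i,i'}Z_{i+i'}(\beta_{2})v$ lies in $S(0;i-1,i'+1;v)$ by (c), and $d(\cdots)v$ has length $0$; thus $Z_{i}Z_{i'}v\in S(0;i-1,i'+1;v)$, proving (a). Finally (b) follows from (a): since $(i,i')<_{T}(i-1,i'+1)$ one has $\tilde T(i-1,i'+1;v)\subseteq\tilde T(i,i';v)$, whence $S(0;i-1,i'+1;v)\subseteq S(0;i,i';v)$.

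The main obstacle is the bookkeeping of the first paragraph: reading off $C_{-1}(\alpha_{1},\alpha_{1})$ and the exponents of $F(\alpha_{1},\alpha_{1},\zeta)$ from the $\nu$-orbit table, recognizing the two roots $\nu^{q}\alpha_{1}+\alpha_{1}$ ($q\in C_{-1}$) as $\nu$-translates of the single root $\beta_{2}$, and checking that the structure constants $C_{a,b}$ occurring in the two applications do not vanish. The remaining points --- the parity cancellation, the finiteness of the sums on $v$, and the $<_{T}$-comparisons --- are routine.
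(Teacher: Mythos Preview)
Your argument for (a) and (b) given (c) is fine, and the overall strategy is the right one. The gap is in your handling of (c) at the base case $i-i'=2$. There you set $a=b=i-1$ and claim the relation collapses to ``$Z_{i+i'}(\beta_{2})v$ is a scalar multiple of $v$.'' But the coefficient $C_{a,b}$ in front of $Z_{a+b}(\beta_{2})$ is (up to a nonzero constant) $\omega^{-2a}-\omega^{-2b}$ --- this is exactly the paper's $g_{1}(a,b)$ --- and it \emph{vanishes} when $a=b$. So in that case the identity reads $0=0$ and yields no information about $Z_{i+i'}(\beta_{2})v$ at all. More generally $C_{i-1,i'+1}=0$ whenever $i-i'\equiv 2\pmod{2s-1}$, so the problem is not isolated.

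Your suggested fallback, replacing $(a,b)$ by another odd pair with the same sum, does not rescue (c) on its own: any such pair with $a>i-1$ forces the term $Z_{i}Z_{i'}$ to appear among the $Z_{a-p}Z_{b+p}$ on the left, and $(i,i')$ is \emph{not} $<_{T}$-above $(i-1,i'+1)$, so you cannot absorb it into $S(0;i-1,i'+1;v)$ without already knowing (a) --- which in turn you derive from (c). The circularity is exactly what the paper avoids by treating (a) and (c) as a coupled $2\times2$ linear system: it extracts the coefficients of $\zeta_{1}^{i}\zeta_{2}^{i'}$ \emph{and} $\zeta_{1}^{i+1}\zeta_{2}^{i'-1}$, getting two relations in the two unknowns $Z_{i}Z_{i'}v$ and $Z_{i+i'}(\beta_{2})v$ modulo $S(0;i-1,i'+1;v)$, and then checks that the determinant $c(\alpha_{1},\alpha_{1},0)g_{1}(i+1,i'-1)-c(\alpha_{1},\alpha_{1},1)g_{1}(i,i')$ is nonzero for $i-i'\equiv 2,4\pmod{4s-2}$ (these two residues suffice by a short reduction). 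Your relation from $(a,b)=(i,i')$ is one of these two rows; you just need to pair it with the second row rather than with the degenerate $(i-1,i'+1)$ relation.
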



		\begin{proof}
			It is enough to show (a) and (c),
			as we have
			${S}(0;i-1,i'+1;v)\subseteq{S}(0;i,i';v)$
			from the definitions of ${S}$.
			Take $i,i'\in2\mathbb{Z}$ such that $i>i'$. 
			We denote ${S}(0;i-1,i'+1;v)$
			by ${S}(i,i')$.
			As $C_{-1}(\alpha_{1},\alpha_{1})=\{2,m-2\}$,
			by assigning $\alpha=\beta=\alpha_{1}$ and $k=2$ in (\ref{thrm}), 
			and comparing the coefficients of $\zeta_{1}^{i}\zeta_{2}^{i'}$ and
			$\zeta_{1}^{i+1}\zeta_{2}^{i'-1}$ respectively,
			we get the following formulae,
			\begin{align}
				&(c(\alpha_{1},\alpha_{1},0)Z_{i}Z_{i'}
				-g_{1}(i,i')Z_{i+i'}(\beta_{2}))v
				\in
				{{S}(i,i')},\label{ij1}\\
				&(c(\alpha_{1},\alpha_{1},1)Z_{i}Z_{i'}
				-g_{1}(i+1,i'-1)Z_{i+i'}(\beta_{2}))v
				\in
				{S}(i,i'),\label{ij2}
			\end{align}
			where for $n,n'\in\mathbb{Z}$,
			\begin{align*}
				g_{1}(n,n')=\frac{1}{m}\varepsilon(\nu^{2}\alpha_{1},\alpha_{1})
				(\omega^{-2n}-\omega^{-2n'}).
			\end{align*}

			It is sufficient to show that
			the following determinant of the matrix, 
			consisting of the coefficients of $Z_{i}Z_{i'}$
			and $Z_{i+i'}(\beta_{2})$ in (\ref{ij1}) and (\ref{ij2}),  is not 0 
			in the cases of $i-i'\equiv2,4\textrm{ (mod }{4s-2)}$.
			\begin{align*}
				\begin{vmatrix} 
					c(\alpha_{1},\alpha_{1},0) & c(\alpha_{1},\alpha_{1},1)\\
					g_{1}(i,i') & g_{1}(i+1,i'-1)
				\end{vmatrix}
				&=\frac{1}{m}\varepsilon(\nu^{2}\alpha_{1},\alpha_{1})
				\omega^{-2i}((2-\omega^{2})-\omega^{2i-2i'-2}(2\omega^{2}-1)).
			\end{align*}
			The reason we consider only the cases of $i-i'\equiv2,4$ is
			as follows.
			If the given determinant is not 0 in such cases, 
			we have $Z_{i}Z_{i'}v\in{{S}(i,i')}$ and $Z_{i+i'}(\beta_{2})v\in{{S}(i,i')}$
			for $i,i'\in2\mathbb{Z}$ such that $i>i'$ and $i-i'\equiv2,4$.
			The latter one means, for all $i,i'\in2\mathbb{Z}$ such that $i>i'$, 
			we have
			$Z_{i+i'}(\beta_{2})v\in{{S}(i,i')}$.
			Thus, from (\ref{ij1}),  we have $Z_{i}Z_{i'}v\in{{S}(i,i')}$
			for all $i,i'\in2\mathbb{Z}$ such that $i>i'$.

			Set $f(n)=(2-\omega^{2})-\omega^{2n-2}(2\omega^{2}-1)$. Then, we have
			\begin{align*}
				f(2)=2-2\omega^{4}=-2(\omega^{4}-1),\qquad
				f(4)=(\omega^{4}-1)(-2\omega^{4}+\omega^{2}-2).
			\end{align*}
			It is easy to see $f(2),f(4)\neq0$. 
			Therefore,  we obtain the consequence.
		\end{proof}


		\begin{prop}\label{preo}
			For
			$i\in2\mathbb{Z}$ and $j\in\mathbb{Z}\backslash2\mathbb{Z}$
			such that $i>j$,  we have
			\begin{align*}
				Z_{i}Z_{j}v
				\in{S}(1;i,j;v).
			\end{align*}
			Likewise, for 
			$j\in\mathbb{Z}\backslash2\mathbb{Z}$ and $i\in2\mathbb{Z}$ such that $j>i$, 
			we have
			\begin{align*}
				Z_{j}Z_{i}v
				\in{S}(1;j,i;v).
			\end{align*}
		\end{prop}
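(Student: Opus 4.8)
The plan is to mimic the strategy of Proposition~\ref{pree}, but now with the mixed pair $\alpha=\alpha_1$, $\beta=\alpha_s$ (or $\alpha=\alpha_s$, $\beta=\alpha_1$ for the second assertion). First I would compute, from Table~1 of the $\nu$-orbits, the inner products $\langle\nu^p\alpha_1,\alpha_s\rangle$ for $p\in\mathbb{Z}/m\mathbb{Z}$, and in particular identify the set $C_{-1}(\alpha_1,\alpha_s)=\{p\mid\langle\nu^p\alpha_1,\alpha_s\rangle=-1\}$ and check that $C_{-2}(\alpha_1,\alpha_s)=\varnothing$. From the table, $\alpha_1$ is adjacent (in the $A_{2s-1}$ Dynkin diagram) to $\alpha_s$ only when $\nu^p\alpha_1=\alpha_{s-1}$, $\nu^p\alpha_1=\alpha_{s+1}$, or $\nu^p\alpha_1=\alpha_1+\cdots+\alpha_{s+1}$ (which pairs with $\alpha_s$ to give $-1$); so $C_{-1}$ is a small explicit set, and the ``error term'' on the right side of the generalized commutation relation is a sum of $Z_{i+j}(\nu^q\alpha_1+\alpha_s)$ over $q\in C_{-1}$, where $\nu^q\alpha_1+\alpha_s$ is again a root. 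The key structural point is that $i+j$ is \emph{odd} here, so these $Z_{i+j}(\cdot)$ are odd-index $Z$-operators of length one; in $L(\Lambda_0)\otimes L(\Lambda_1)$ the relation (\ref{tensor}) forces $Z_{\mathrm{odd}}(\beta_j)=0$ except when the root lies in the $\nu$-orbit of $\alpha_s$, so after using $Z(\nu^q\alpha,\zeta)=Z(\alpha,\omega^q\zeta)$ to rotate each $\nu^q\alpha_1+\alpha_s$ into its representative in $\Phi'$, the only surviving term is a scalar multiple of $Z_{i+j}(\alpha_s)=Z_{i+j}$, a single odd-index operator, which lies in $\tilde T(i,j;v)$ by length and in $\tilde S(1;v)$ by odd-part count, hence in $S(1;i,j;v)$.

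Next I would extract, exactly as in the proof of Proposition~\ref{pree}, the coefficient of $\zeta_1^a\zeta_2^b$ in (\ref{thrm}) with $\alpha=\alpha_1$, $\beta=\alpha_s$, $k=2$. This yields, for the top-degree monomial $\zeta_1^i\zeta_2^j$, a relation of the shape
\begin{align*}
c(\alpha_1,\alpha_s,0)\,Z_i(\alpha_1)Z_j(\alpha_s)\,v
= \bigl(\text{scalar}\bigr)\,Z_{i+j}\,v + \bigl(\text{elements of }S(1;i,j;v)\bigr),
\end{align*}
where the $S(1;i,j;v)$-part collects all the strictly lower monomials $Z_{i-p}(\alpha_1)Z_{j+p}(\alpha_s)v$ and $Z_{j-p}(\alpha_s)Z_{i+p}(\alpha_1)v$ with $p\ge1$ (these are $\le_T$-smaller length-2 monomials, each with exactly one odd part, hence in $\tilde S(1;v)\cap\tilde T(i,j;v)$) together with the constant term. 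Since $c(\alpha_1,\alpha_s,0)=F(\alpha_1,\alpha_s,0)=1\neq0$, I can solve for $Z_i Z_j v$ directly — unlike in Proposition~\ref{pree}, no $2\times2$ determinant is needed because the error term $Z_{i+j}=Z_{i+j}(\alpha_s)$ is \emph{already} a legitimate length-one element of $S(1;i,j;v)$, not an auxiliary operator $Z(\beta_2)$ attached to a non-representative root that must be eliminated. This is precisely why the mixed case is cleaner than the even-even case.

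For the second assertion ($j>i$, with $Z_j(\alpha_s)$ on the left), I would instead take $\alpha=\alpha_s$, $\beta=\alpha_1$ in (\ref{thrm}) and compare coefficients of $\zeta_1^j\zeta_2^i$; the analysis is symmetric, using $C_{-1}(\alpha_s,\alpha_1)$ (again read off from the table) and $c(\alpha_s,\alpha_1,0)=1$, and the error term is again a single odd-index $Z_{i+j}=Z_{i+j}(\alpha_s)$ after rotating representatives, which lies in $S(1;j,i;v)$. The routine-but-necessary bookkeeping is: (i) confirming $C_{-2}(\alpha_1,\alpha_s)=\varnothing$ so there is no $(D\delta)$ contribution (this should follow since $\langle\nu^p\alpha_1,\alpha_s\rangle\in\{0,-1,1\}$ for all $p$, as $\alpha_1$ and $\alpha_s$ never lie in the same or opposite $\nu$-orbit), and (ii) checking that for each $q\in C_{-1}$ the root $\nu^q\alpha_1+\alpha_s$ actually rotates into the $\nu$-orbit of $\alpha_s$ rather than some $\beta_j$, so that (\ref{tensor}) kills it or leaves it as a multiple of $Z_{i+j}(\alpha_s)$. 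I expect step (ii) — correctly tracking which $\nu$-orbit each $\nu^q\alpha_1+\alpha_s$ belongs to, and hence which terms vanish in the tensor-product module versus which collapse to $Z_{i+j}$ — to be the main obstacle; everything else is a direct transcription of the argument already carried out for Proposition~\ref{pree}.
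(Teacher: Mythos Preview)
Your approach is essentially identical to the paper's: apply the generalized commutation relation with $\alpha=\alpha_1$, $\beta=\alpha_s$, $k=2$, extract the coefficient of $\zeta_1^i\zeta_2^j$, and use $c(\alpha_1,\alpha_s,0)=1$ so that no determinant argument is needed; the paper's proof is much terser than yours but follows exactly this line. One small correction: your tentative list of $C_{-1}(\alpha_1,\alpha_s)$ is off (for instance $\langle\alpha_1+\cdots+\alpha_{s+1},\alpha_s\rangle=0$, not $-1$); the paper records $C_{-1}(\alpha_1,\alpha_s)=\{2s-4,\,m-1\}$, corresponding to $\nu^{2s-4}\alpha_1=\alpha_{s-1}$ and $\nu^{m-1}\alpha_1=-(\alpha_s+\cdots+\alpha_{2s-1})$, and both resulting roots $\alpha_{s-1}+\alpha_s$ and $-(\alpha_{s+1}+\cdots+\alpha_{2s-1})$ lie in the $\nu$-orbit of $\alpha_s$ --- so your robustness remark (that any $\beta_j$-orbit contribution would vanish by~(\ref{tensor}) since $i+j$ is odd) is correct but in fact not needed.
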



		\begin{proof}
			Take 
			$i\in2\mathbb{Z}$ and $j\in\mathbb{Z}\backslash2\mathbb{Z}$
			such that $i>j$. 
			As $C_{-1}(\alpha_{1},\alpha_{s})=\{2s-4,m-1\}$,
			by assigning $\alpha=\alpha_{1}$, $\beta=\alpha_{s}$, and $k=2$
			in (\ref{thrm}), and comparing the coefficients of $\zeta_{1}^{i}\zeta_{2}^{j}$,
			we obtain
			\begin{align*}
				c(\alpha_{1},\alpha_{s},0)Z_{i}Z_{j}v
				=Z_{i}Z_{j}v
				\in{{S}(1;i,j;v)}.
			\end{align*}

			Likewise,  take 
			$j\in\mathbb{Z}\backslash2\mathbb{Z}$ and $i\in2\mathbb{Z}$
			such that $j>i$.
			By assigning $\alpha=\alpha_{1}$, $\beta=\alpha_{s}$, and $k=2$ in (\ref{thrm}),
			and comparing the coefficients of $\zeta_{1}^{j}\zeta_{2}^{i}$,
			we get
			\begin{align*}
				c(\alpha_{s},\alpha_{1},0)Z_{j}Z_{i}v
				=Z_{j}Z_{i}v\in{{S}(1;j,i;v)}.
			\end{align*}
		\end{proof}


		\begin{prop}\label{beta_n}
			For $a\in2\mathbb{Z}$ and $2\le n\le s-1$,  we have
			\begin{align*}
				Z_{a}(\beta_{n})v\in\SPAN(\tilde{S}(0;v)).
			\end{align*}
		\end{prop}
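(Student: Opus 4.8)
The plan is to induct on $n$, using the generalized commutation relations to express $Z_a(\beta_n)$ in terms of products involving $Z(\beta_{n-1})$ and $Z(\alpha_1)$. The base case $n=2$ is already handled by Proposition \ref{pree}(c), which gives $Z_{i+i'}(\beta_2)v \in S(0;i-1,i'+1;v) \subseteq \SPAN(\tilde S(0;v))$ for $i,i'\in 2\mathbb{Z}$ with $i>i'$; since every even integer $a$ can be written as $i+i'$ for such a pair, we get $Z_a(\beta_2)v \in \SPAN(\tilde S(0;v))$ for all $a\in 2\mathbb{Z}$. For the inductive step, I would look at the $\nu$-orbit structure: one checks (using the root system of type $A_{2s-1}$) that $\langle \beta_{n-1}, \alpha_{?}\rangle$ and the relevant pairing data produce $\beta_n = \nu^q(\beta_{n-1}) + (\text{something})$ or, more directly, that $\alpha_1 + \beta_{n-1}$ and its $\nu$-translates realize $\beta_n$ up to the $\nu$-action, so that $Z(\beta_n,\zeta)$ appears on the right-hand side of the generalized commutation relation applied to a pair of roots whose $Z$-operators are already known (inductively) to preserve $\SPAN(\tilde S(0;v))$.

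Concretely, I would apply Theorem \ref{thrm} with $\alpha = \beta_{n-1}$ (or a suitable $\nu$-conjugate of $\alpha_1$) and $\beta = \alpha_1$, compute the set $C_{-1}(\alpha,\beta)$, and extract from the $\delta$-term the coefficient of $Z_a(\beta_n)$. This expresses $Z_a(\beta_n)v$, up to an invertible scalar, as a $\mathbb{C}$-linear combination of: (i) products $Z_{a'}(\beta_{n-1}) Z_{a''}(\alpha_1)v$ and $Z_{a''}(\alpha_1) Z_{a'}(\beta_{n-1})v$ with $a'+a'' = a$, plus (ii) a constant multiple of $v$ coming from the $D\delta$-term, provided $C_{-2}$ is nonempty — which it is only when $\alpha = -\beta$, and since $\beta_{n-1} \neq -\alpha_1$ this term vanishes. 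By the inductive hypothesis $Z_{a'}(\beta_{n-1})$ applied to $v$ (or to $Z_{a''}(\alpha_1)v$, which itself lies in a sum of $\tilde S(0)$-vectors by repeated application) lands in $\SPAN(\tilde S(0;v))$; here one uses that all the roots $\beta_{n-1}$, $\alpha_1$, $\beta_n$ are even-indexed $Z$-operators, so none of the products acquire an odd part and they all stay in $\tilde S(0;v)$. One subtlety: the indices $a',a''$ that appear are not a priori both even — but they are, because the nonzero pairings $\langle \nu^t\beta_{n-1},\alpha_1\rangle$ only occur at even powers $t$ of $\nu$ (this is exactly the parity phenomenon behind $Z_{2i+1}(\beta_j) = 0$ in (\ref{tensor})), forcing $a' \equiv a'' \equiv a \equiv 0 \pmod 2$.

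The main obstacle is bookkeeping the $\nu$-orbit and pairing data so that $\beta_n$ (rather than some other root, or $-\beta_n$) actually shows up in the $C_{-1}$ sum with a nonzero $\varepsilon$-coefficient, and ensuring the scalar $c(\beta_{n-1},\alpha_1,0) = 1$ so that the "leading" product term can be solved for. The parity argument ensuring that only even-indexed $Z$-operators enter is the conceptual heart, but it follows from the same orbit table used in \S\ref{MTpf}; the remaining work — checking $C_{-2}(\beta_{n-1},\alpha_1)=\emptyset$ and that the relevant structure constants are nonzero — is routine given the explicit formulas for $\varepsilon$ and $\langle\cdot,\cdot\rangle$. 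Thus the proof reduces to: (1) base case from Proposition \ref{pree}(c); (2) set up Theorem \ref{thrm} with $(\beta_{n-1},\alpha_1)$; (3) isolate $Z_a(\beta_n)$; (4) invoke the induction hypothesis on the resulting even-indexed $Z(\beta_{n-1})$- and $Z(\alpha_1)$-monomials; (5) conclude $Z_a(\beta_n)v \in \SPAN(\tilde S(0;v))$.
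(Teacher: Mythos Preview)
Your approach is essentially the paper's: induction on $n$ with base case $n=2$ from Proposition~\ref{pree}(c), and the inductive step via the generalized commutation relation between $Z(\alpha_1)$ and $Z(\beta_{\bullet})$. The paper uses the pair $(\alpha_1,\beta_n)$ to pass from $\{\beta_{n-1},\beta_n\}$ to $\beta_{n+1}$, whereas you use $(\beta_{n-1},\alpha_1)$ to pass from $\beta_{n-1}$ to $\beta_n$; these are the same argument with the index shifted. Two points in your outline need correction, though.

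First, the $C_{-1}$ sum does not yield only $Z(\beta_n)$: for the pair $(\alpha_1,\beta_{n-1})$ (equivalently your $(\beta_{n-1},\alpha_1)$) the set $C_{-1}$ has four elements and the roots $\nu^q\alpha+\beta$ fall into \emph{two} $\nu$-orbits, producing both a $Z(\beta_n)$ term and a $Z(\beta_{n-2})$ term (in the paper's version, the constants $g_2$ and $g_3$ multiply $Z(\beta_{n+1})$ and $Z(\beta_{n-1})$ respectively). You therefore need the induction hypothesis at two consecutive values, not one; strong induction handles this, but it must be stated. Second, the ``invertible scalar'' in front of $Z_a(\beta_n)$ is not automatic: the two $C_{-1}$ contributions to that orbit combine into a difference of roots of unity that can vanish. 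The paper writes $a=i+i'$ with $i,i'\in 2\mathbb{Z}$ and chooses them so that $g_2(i,i',n)=\varepsilon(\nu^{2n}\alpha_1,\beta_n)(\omega^{-2ni}-\omega^{-2i'})\neq 0$; you need the analogous check. Finally, your parity justification is mis-stated: it is not that $\langle\nu^t\beta_{n-1},\alpha_1\rangle$ vanishes at odd $t$ (it does not --- e.g.\ $t=2s-1$ lies in $C_{-1}$), but rather that $Z_{\text{odd}}(\beta_j)$ and $Z_{\text{odd}}(\alpha_1)$ themselves vanish on $L(\Lambda_0)\otimes L(\Lambda_1)$ by~(\ref{tensor}), which kills the odd-$p$ summands directly.
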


		\begin{proof}
			We use induction on $n$,  and denote $\SPAN(\tilde{S}(0;v))$ by $S'$.
			From Proposition \ref{pree}(c),  we have $Z_{a}(\beta_{2})v\in{S'}$.
			Set $2\le n\le s-2$ and take $i,i'\in2\mathbb{Z}$ such that $i+i'=a$ and $i\neq{i'}$.
			Assume that
			$Z_{\textrm{even}}(\beta_{n-1})v,
			Z_{\textrm{even}}(\beta_{n})v\in{S'}$.
			As $C_{-1}(\alpha_{1},\beta_{n})=\{2i,2s-1,2s+2i-3,m-2\}$,
			by assigning $\alpha=\alpha_{1}$,  $\beta=\beta_{n}$,  and $k=2$ in (\ref{thrm}), 
			and comparing the coefficients of $\zeta_{1}^{i}\zeta_{2}^{i'}$, 
			we obtain
			\begin{align}
				&
				g_{2}(i,i',n)
				Z_{i+i'}(\beta_{n+1})v\notag\\*
				&=(\sum_{p=0}^{\infty}(c(\alpha_{1},\beta_{n},p)
				Z_{i-p}(\alpha_{1})Z_{i'+p}(\beta_{n})
				-c(\beta_{n},\alpha_{1},p)Z_{i'-p}(\beta_{n})Z_{i+p}(\alpha_{1}))\notag\\*
				&
				-g_{3}(i,i',n)
				Z_{i+i'}(\beta_{n-1}))v,\label{beta_n+}
			\end{align}
			where
			\begin{align*}
				g_{2}(i,i',n)
				&=
				\varepsilon(\nu^{2n}\alpha_{1},\beta_{n})\omega^{-2ni}
				+\varepsilon(\nu^{-2}\alpha_{1},\beta_{n})\omega^{-2i'}\\*
				&=
				\varepsilon(\nu^{2n}\alpha_{1},\beta_{n})
				(\omega^{-2ni}-\omega^{-2i'}),\\
				g_{3}(i,i',n)
				&=\varepsilon(-\alpha_{1},\beta_{n})\omega^{2(i+i')}
				+\varepsilon(-\nu^{2n-2}\alpha_{1},\beta_{n})\omega^{(-2n+2)i}.
			\end{align*}
			By taking $i,i'$ properly,  we obtain $\omega^{-2ni}-\omega^{-2i'}\neq0$, 
			i.e., $g_{2}(i,i',n)\neq0$.
			Thus from (\ref{beta_n+}), 
			we get $Z_{\textrm{even}}(\beta_{n+1})v\in{S'}$. 
			Therefore we obtain the consequence.
		\end{proof}

		\begin{prop}\label{proo}
			For $j,j'\in\mathbb{Z}\backslash2\mathbb{Z}$ such that $j>j'$, we have
			\begin{align*}
				Z_{j}Z_{j'}v\in{\SPAN((\tilde{S}(2;v)\cap\tilde{T}(j,j';v))\cup\tilde{S}(0;v))}.
			\end{align*}
		\end{prop}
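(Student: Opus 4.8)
The strategy mirrors the proofs of Propositions \ref{pree} and \ref{preo}, now with the pair $\alpha=\beta=\alpha_s$; in contrast to Proposition \ref{pree}, no invertibility-of-a-matrix step will be needed, because a single coefficient of one instance of the generalized commutation relations already isolates $Z_jZ_{j'}v$. First I would record the orbit data for $\alpha_s$. Since $\langle\nu^p\alpha_s,\alpha_s\rangle=-2$ forces $\nu^p\alpha_s=-\alpha_s$, i.e.\ $p=m/2=2s-1$, we get $C_{-2}(\alpha_s,\alpha_s)=\{m/2\}$, so the $(D\delta)$-part of (\ref{thrm}) contributes only a scalar multiple of $v$ after extracting a coefficient. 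For each $q\in C_{-1}(\alpha_s,\alpha_s)$ the relation $\langle\nu^q\alpha_s,\alpha_s\rangle=-1$ makes $\nu^q\alpha_s+\alpha_s$ a root, and since the $\nu$-orbits of $\alpha_s,\beta_1,\dots,\beta_{s-1}$ exhaust $\Phi$, this root equals $\nu^r\gamma$ for some $r$ and some $\gamma\in\{\alpha_s,\beta_1,\dots,\beta_{s-1}\}$; I would not need to pin down $C_{-1}(\alpha_s,\alpha_s)$ itself.

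Next I would put $\alpha=\beta=\alpha_s$, $k=2$, $a=j$, $b=j'$ in the coefficient-extracted identity displayed just before Proposition \ref{pree}, and apply it to $v$. For odd summation index $p$ all four shifted indices $j\mp p$ and $j'\pm p$ are even, so by (\ref{tensor}) every $Z$-operator $Z_\bullet(\alpha_s)$ of even index occurring there acts as $0$ on $L(\Lambda_0)\otimes L(\Lambda_1)$; hence only even $p=2m$ contribute, and the identity becomes
\[
\sum_{m\ge0}c(\alpha_s,\alpha_s,2m)\bigl(Z_{j-2m}Z_{j'+2m}-Z_{j'-2m}Z_{j+2m}\bigr)v\ \in\ \SPAN(\tilde{S}(0;v)).
\]
Indeed, on the right-hand side of the identity $j+j'$ is even, and $Z_{j+j'}(\nu^q\alpha_s+\alpha_s)=\omega^{r(j+j')}Z_{j+j'}(\gamma)$ (using $Z(\nu^r\gamma,\zeta)=Z(\gamma,\omega^r\zeta)$) kills $v$ when $\gamma=\alpha_s$ by (\ref{tensor}), lies in $\SPAN(\tilde{S}(0;v))$ when $\gamma=\beta_n$ with $2\le n\le s-1$ by Proposition \ref{beta_n}, and lies in $\tilde{S}(0;v)$ directly when $\gamma=\beta_1=\alpha_1$; and the $C_{-2}$-part is a scalar multiple of $v\in\tilde{S}(0;v)$.

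Finally, since $c(\alpha_s,\alpha_s,0)=F(\alpha_s,\alpha_s,0)=1$, the $m=0$ term above is $(Z_jZ_{j'}-Z_{j'}Z_j)v$, so
\[
Z_jZ_{j'}v=Z_{j'}Z_jv-\sum_{m\ge1}c(\alpha_s,\alpha_s,2m)\bigl(Z_{j-2m}Z_{j'+2m}-Z_{j'-2m}Z_{j+2m}\bigr)v+w,\qquad w\in\SPAN(\tilde{S}(0;v)).
\]
Each monomial on the right other than $Z_jZ_{j'}v$ is a length-$2$ $Z$-monomial in two odd-indexed $Z$-operators whose indices sum to $j+j'$, whose right-hand index is $\ge j'$, and which is $\neq(j,j')$; a direct check of Definition \ref{OrderT} gives $(j,j')<_T(k_1,k_2)$ for every such pair $(k_1,k_2)$, so each of these monomials lies in $\tilde{S}(2;v)\cap\tilde{T}(j,j';v)$. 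Hence $Z_jZ_{j'}v\in\SPAN\bigl((\tilde{S}(2;v)\cap\tilde{T}(j,j';v))\cup\tilde{S}(0;v)\bigr)$, as asserted. The only step needing care is the right-hand-side bookkeeping in the middle paragraph—confirming that every $C_{-1}$-term really falls under (\ref{tensor}) or Proposition \ref{beta_n}—but this is automatic once one notes that $\nu^q\alpha_s+\alpha_s$ is forced to be a root and that the $\nu$-orbits of $\alpha_s,\beta_1,\dots,\beta_{s-1}$ already cover $\Phi$; so, unlike in Proposition \ref{pree}, there is no nonvanishing determinant to verify.
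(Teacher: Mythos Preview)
Your proof is correct and follows essentially the same route as the paper: apply the generalized commutation relation with $\alpha=\beta=\alpha_s$, $k=2$, extract the coefficient of $\zeta_1^{j}\zeta_2^{j'}$, discard the odd-$p$ terms via (\ref{tensor}), place the remaining length-$2$ monomials into $\tilde S(2;v)\cap\tilde T(j,j';v)$ by the order check, and absorb the $C_{-1}$-contributions into $\SPAN(\tilde S(0;v))$ via Proposition \ref{beta_n}. The only cosmetic difference is that the paper computes $C_{-1}(\alpha_s,\alpha_s)$ explicitly and writes each right-hand term as $Z_{j+j'}(\beta_{s-q})$, whereas you argue abstractly that $\nu^q\alpha_s+\alpha_s$ lands in the $\nu$-orbit of some $\gamma\in\{\alpha_s,\beta_1,\dots,\beta_{s-1}\}$; both lead to the same conclusion.
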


		\begin{proof}
			Take $j,j'\in\mathbb{Z}\backslash2\mathbb{Z}$ such that $j>j'$. 
			As $C_{-1}(\alpha_{s},\alpha_{s})
			=\{n\in\mathbb{Z}\backslash2\mathbb{Z}\mid1\le{n}\le{m-1},\ n\neq2s-1\}$,
			by assigning $\alpha=\beta=\alpha_{s}$, $k=2$ in (\ref{thrm}), 
			and comparing the coefficients of $\zeta_{1}^{j}\zeta_{2}^{j'}$, we obtain
			\begin{align*}
				&(c(\alpha_{s},\alpha_{s},0)Z_{j}Z_{j'}
				-\frac{1}{m}\sum^{s-1}_{q=1}
				g_{4}(j,j',q)
				Z_{j+j'}(\beta_{s-q}))v\\*
				&=(Z_{j}Z_{j'}
				-\frac{1}{m}\sum^{s-1}_{q=1}
				g_{4}(j,j',q)
				Z_{j+j'}(\beta_{s-q}))v
				\in{S}(2;j,j';v),
			\end{align*}
			where $g_{4}(j,j',q)$ is a constant dependent on $j,j'$ and $q$.
			We get
			\begin{align*}
				(\frac{1}{m}\sum^{s-1}_{q=1}
				g_{4}(j,j',q)Z_{j+j'}(\beta_{s-q}))v\in\SPAN(\tilde{S}(0;v)),
			\end{align*}
			from Proposition \ref{beta_n}.  Thus we get the consequence.
		\end{proof}

		\subsection{Proof of the theorem}
			We reduce the $Z$-monomial generating set of the vacuum space
			from (\ref{reduced}) to (\ref{MT'}).
			From (\ref{tensor}) and Proposition \ref{beta_n}, 
			$Z_{\textrm{even}}(\alpha_{s})$, 
			$Z_{\textrm{odd}}(\alpha_{1})$ and
			$Z_{i}(\beta_{n})$ for $i\in\mathbb{Z}$ are eliminated from (\ref{reduced}),
			remaining
			\begin{align}\label{remaining}
				&\{Z_{i_{1}}\cdots{Z_{i_{l}}}v_{L}\mid
				l\ge0,\ i_{j}\in\mathbb{Z}\mbox{ for }1\le i\le l,\ 
				(i_{1},\dotsc,i_{l})\le_{T}(0,\dotsc,0)
				\}.
			\end{align}
			Thus it is sufficient to show that 
			$Z$-monomials $Z_{i_{1}}\cdots{Z_{i_{l}}}v_{L}$ with some parts $(i_{j},i_{j+1})$
			such that $i_{j}>i_{j+1}$ are eliminated from (\ref{remaining}).
			Take such $Z_{i_{1}}\cdots{Z_{i_{l}}}v_{L}$
			and
			assume that $Z_{i_{1}}\cdots Z_{i_{l}}v_{L}\in S(n;v_{L})$.
			From Proposition \ref{proo},
			if some parts $(i_{j},i_{j+1})$
			such that $i_{j}>i_{j+1}$
			are odd-odd parts,
			we have
			\begin{align*}
			Z_{i_{1}}\cdots Z_{i_{l}}v_{L}\in
			{\SPAN((\tilde{S}(n;v)\cap\tilde{T}(i_{1},\dotsc,i_{l};v))\cup\tilde{S}(n-2;v))}.
			\end{align*}
			From Proposition \ref{pree}(b) and \ref{preo},
			if no part $(i_{j},i_{j+1})$
			such that $i_{j}>i_{j+1}$
			is an odd-odd part,
			then we have
			\begin{align*}
			Z_{i_{1}}\cdots Z_{i_{l}}v_{L}\in{\tilde{S}(n;i_{1},\dotsc,i_{l};v_{L})}.
			\end{align*}
			Therefore,  $Z_{i_{1}}\cdots{Z_{i_{l}}}v_{L}$
			are eliminated from (\ref{remaining}).

		\begin{rmk}\label{rmk}
			In Theorem \ref{MT}, we can add the partition condition
			\begin{align*}
				i_{n+1}-i_{n}\neq1,
			\end{align*}
			for $1\le{n}\le{l}$,
			since we have
			\begin{enumerate}
			\item
			$Z_{i}Z_{i+1}v\in{S}(1;i,i+1;v)$ for $i\in2\mathbb{Z}$,
			\item
			$Z_{j}Z_{j+1}v\in{S}(1;j,j+1;v)$ for $j\in\mathbb{Z}\backslash2\mathbb{Z}$,
			\end{enumerate}
			We show (1) and (2) briefly.
			By assigning $\alpha=\alpha_{1}$, $\beta=\alpha_{s}$, and $k=2$ in (\ref{thrm}),	
			and comparing the coefficients of $\zeta_{1}^{i+1}\zeta_{2}^{i}$ for $i\in2\mathbb{Z}$,
			we obtain 
			\begin{align}\label{rmk1}
				c(\alpha_{1},\alpha_{s},1)Z_{i}Z_{i+1}v
				=Z_{i}Z_{i+1}v
				\in{S}(1;i,i+1;v),
			\end{align} 
			and by comparing the coefficients of $\zeta_{1}^{j+1}\zeta_{2}^{j}$
			for $j\in\mathbb{Z}\backslash2\mathbb{Z}$, 
			we have
			\begin{align}\label{rmk2}
				c(\alpha_{s},\alpha_{1},1)Z_{j}Z_{j+1}v
				=Z_{j}Z_{j+1}v
				\in{S}(1;j,j+1;v).
			\end{align}

		\end{rmk}

\section{The cases of $A^{(2)}_{5}$, $A^{(2)}_{7}$, and $A^{(2)}_{9}$ type level 2}\label{concrete579}
		In this section,  we consider the cases of 
		$A^{(2)}_{2s-1}$ type level 2 for $s=3,4,5$.
		In these cases,  we prove $Z_{i_{1}}\cdots Z_{i_{l}}v\in{T(i_{1},\dotsc,i_{l};v)}$
		for $(i_{1},\dotsc,i_{l})\in\mathbb{Z}^{l}$
		which does not satisfy certain partition conditions
		(See (\ref{OT}) for the definition of $T(i_{1},\dotsc,i_{l};v)$).
		In Proposition \ref{pree}(b),  \ref{preo},  and (\ref{rmk1}),  (\ref{rmk2}),
		we have proven that $Z_{i}Z_{j}v\in{T(i,j;v)}$
		in the cases such that
		\begin{enumerate}
			\item $i,j\in2\mathbb{Z}$ and $i>j$,
			\item $i\in2\mathbb{Z}$, $j\in\mathbb{Z}\backslash2\mathbb{Z}$ and $i>j$,
			\item $i\in\mathbb{Z}\backslash2\mathbb{Z}$, $j\in2\mathbb{Z}$ and $i>j$,
			\item $j-i=1$. 
		\end{enumerate}
		In addition, from Proposition \ref{beta_n}, 
		we have for $1\le{n}\le{s-1}$,
		\begin{align*}
			Z_{\textrm{even}}(\beta_{n})v
			\in\SPAN\{Z_{i_{1}}Z_{i_{2}}\cdots{Z_{i_{l}}}v\mid l\ge0\}.
		\end{align*}
		Here,  we focus on the remaining partition conditions.
		As in the previous section,  
		let $v\in\Omega(L)$ for $L=L(\Lambda_{0})\otimes{L(\Lambda_{1})}$. 

	\subsection{The case of type $A^{(2)}_{5}$}\label{5th}
		We prove Theorem \ref{Thm5th} in this section.
		The remaining conditions are as follows.
		\begin{enumerate}
			\item
				$Z_{j}Z_{j'}v\in{{T}(j,j';v)}$
				for $j,j'\in\mathbb{Z}\backslash2\mathbb{Z}$ such that $j\ge{j'}$.
			\item
				$Z_{i}Z_{i}v\in{{T}(i,i;v)}$ for $i\in2\mathbb{Z}$.
			\item
				$Z_{i}Z_{i+2}v\in{{T}(i,i+2;v)}$ for $i\in2\mathbb{Z}$.
			\item
				${i_{l}}\le{-3}$ in the case of $L(\Lambda_{3})$.
		\end{enumerate}

		For $m,n\in\mathbb{Z}$, define $h_{1}(m,n)$ and $h_{2}(m,n)$ as follows.
		\begin{align*}
			h_{1}(m,n)=\frac{1}{10}\varepsilon(\nu\alpha_{3},\alpha_{3})
			(\omega^{-m}-\omega^{-n}),\quad
			h_{2}(m,n)=\frac{1}{10}\varepsilon(\nu^{2}\alpha_{1},\alpha_{1})
			(\omega^{-m}-\omega^{-n}).
		\end{align*}

		\begin{prop}
			For $j,j'\in\mathbb{Z}\backslash2\mathbb{Z}$ such that $j>j'$,  we have
			$Z_{j}Z_{j'}v\in{{T}(j,j';v)}$.
		\end{prop}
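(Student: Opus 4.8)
The plan is to use the generalized commutation relation (\ref{thrm}) with $\alpha=\beta=\alpha_{s}=\alpha_{3}$, $k=2$, exactly as in the proof of Proposition \ref{proo}, but now specialized to the case $s=3$ and combined with the extra order parameter recording the number of odd parts. Since we are in type $A^{(2)}_{5}$, we have $m=10$ and $C_{-1}(\alpha_{3},\alpha_{3})=\{n\in\mathbb{Z}\backslash2\mathbb{Z}\mid 1\le n\le 9,\ n\neq 5\}=\{1,3,7,9\}$, so the right-hand side of (\ref{thrm}) produces only $Z_{j+j'}(\beta_{3-q})$ for $q=1,2$, i.e. $Z_{j+j'}(\beta_{2})$ and $Z_{j+j'}(\beta_{1})=Z_{j+j'}(\alpha_{1})$. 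By Proposition \ref{beta_n}, $Z_{j+j'}(\beta_{2})v\in\SPAN(\tilde S(0;v))$; and $Z_{j+j'}(\alpha_{1})v=Z_{j+j'}v$ when $j+j'$ is even, which it is since $j,j'$ are both odd — so this term is a single $Z$-monomial of length $1$, hence lies in $T(j,j';v)$ trivially (shorter length). Thus from Proposition \ref{proo} we already get $Z_{j}Z_{j'}v\in\SPAN\big((\tilde S(2;v)\cap\tilde T(j,j';v))\cup\tilde S(0;v)\big)$.

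First I would fix the level-$2$ standard module $L=L(\Lambda^{(2i-1)})-(i-1)\delta\subseteq L(\Lambda_0)\otimes L(\Lambda_1)$ and let $v=v_L$ or, more generally, work with $v\in\Omega(L)$. For the downward induction I would order $Z$-monomials as in Definition \ref{OrderT} refined by length and, crucially, by the number of odd parts via $\tilde S(n;v)$ as in (\ref{order}); this is precisely the new ingredient flagged in the introduction. Concretely, assume inductively that every $Z$-monomial strictly below $Z_jZ_{j'}v$ in this combined order already lies in the span of monomials with nondecreasing indices. The relation (\ref{thrm}) with the coefficients compared at $\zeta_1^{j}\zeta_2^{j'}$ expresses $c(\alpha_3,\alpha_3,0)Z_jZ_{j'}v = Z_jZ_{j'}v$ as a combination of (i) products $Z_{j-p}Z_{j'+p}v$ and $Z_{j'-p}Z_{j+p}v$ with $p\ge 1$, which are $\le_T$-smaller, hence in $T(j,j';v)$ by induction, (ii) the $Z(\beta_{3-q})$ terms handled above, and (iii) the constant term $d(\alpha_3,\alpha_3,j,j')$, which is an honest scalar multiple of $v$, a length-$0$ monomial. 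Since $c(\alpha_3,\alpha_3,0)=1$ (the constant term of $F$ is $1$), this directly gives $Z_jZ_{j'}v\in T(j,j';v)$.

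The main obstacle is bookkeeping the two-parameter order correctly: the reflected products $Z_{j'-p}Z_{j+p}v$ with $p\ge 1$ keep the number of odd parts equal to $2$ but could in principle have indices that jump above the diagonal, so I must check that they are genuinely $\le_T$-smaller than $(j,j')$ and fall inside $\tilde T(j,j';v)$; this is the standard Lepowsky–Wilson argument and goes through because partial sums decrease. The one genuinely new point to verify carefully is that the $Z_{j+j'}(\beta_2)v$ term, which lands in $\SPAN(\tilde S(0;v))$ with possibly long monomials, does not obstruct the induction — but this is fine because those monomials have $0$ odd parts, hence sit strictly below $Z_jZ_{j'}v$ (which has $2$) in the order (\ref{order}), so they are already accounted for by the inductive hypothesis on $\tilde S(n;v)$ with smaller $n$. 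With these checks the claim follows; the remaining items (2)–(4) of \S\ref{5th} will be treated by analogous specializations of (\ref{thrm}) with $\alpha=\beta=\alpha_1$ or by a direct weight/degree count for the $L(\Lambda_3)$ boundary condition.
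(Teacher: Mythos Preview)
Your overall setup is right---use the generalized commutation relation with $\alpha=\beta=\alpha_{3}$ and compare coefficients at $\zeta_{1}^{j}\zeta_{2}^{j'}$---and your treatment of the $p\ge 1$ terms and of $Z_{j+j'}(\alpha_{1})v$ (a length-$1$ monomial, hence in $T(j,j';v)$) is fine. The gap is in how you handle $Z_{j+j'}(\beta_{2})v$.

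You invoke Proposition~\ref{beta_n}, which only places $Z_{j+j'}(\beta_{2})v$ in $\SPAN(\tilde S(0;v))$. But $\SPAN(\tilde S(0;v))$ is not contained in $T(j,j';v)$: an even-index monomial of length~$2$ can perfectly well be $\ge_{T}(j,j')$, and ``having $0$ odd parts'' is irrelevant to membership in $T(j,j';v)$, whose definition~(\ref{OT}) uses only length and the order $\le_{T}$. Your attempted rescue---passing to the refined order with the number of odd parts---silently changes the target of the argument from ``$Z_{j}Z_{j'}v\in T(j,j';v)$'' to ``$Z_{j}Z_{j'}v$ lies in the span of monomials that are smaller in the combined (length, odd-parts, $T$)-order''. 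That is a different statement, and it is not what the proposition asserts; moreover the later propositions in \S\ref{5th} (and their analogues in \S\ref{7th}, \S\ref{9th}) genuinely need the $T$-order conclusion, since the partition conditions are extracted by straightening monomials with respect to $\le_{T}$ alone.

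The fix is to use Proposition~\ref{pree}(c) instead of Proposition~\ref{beta_n}. Writing $j+j'=(j+1)+(j'-1)$ with $j+1,j'-1\in 2\mathbb{Z}$ and $j+1>j'-1$, Proposition~\ref{pree}(c) gives
\[
Z_{j+j'}(\beta_{2})v\in S(0;(j+1)-1,(j'-1)+1;v)=S(0;j,j';v)\subseteq T(j,j';v),
\]
which is exactly the missing $T$-order control. With this in place, the coefficient comparison at $\zeta_{1}^{j}\zeta_{2}^{j'}$ immediately yields $Z_{j}Z_{j'}v\in T(j,j';v)$, which is the paper's argument.
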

		\begin{proof}
			By assigning $\alpha=\beta=\alpha_{3}$ and $k=2$ in (\ref{thrm}), 
			and comparing the coefficients of $\zeta_{1}^{j}\zeta_{2}^{j'}$, 
			we get
			\begin{align*}
				&(c(\alpha_{3},\alpha_{3},0)Z_{j}Z_{j'}
				-h_{1}(j,j')
				Z_{j+j'}(\beta_{2}))v\\*
				&=
				(Z_{j}Z_{j'}
				-h_{1}(j,j')
				Z_{j+j'}(\beta_{2}))v
				\in{T(j,j';v)}.
			\end{align*}
			From Proposition \ref{pree}, we get 
			\begin{align*}
				Z_{j+j'}(\beta_{2})v=Z_{(j+1)+(j'-1)}(\beta_{2})v\in{T((j+1)-1,(j'-1)+1;v)}=T(j,j';v),
			\end{align*}
			thus $Z_{j}Z_{j'}v\in{T(j,j';v)}$ holds.
		\end{proof}

		\begin{prop}
			For $j\in\mathbb{Z}\backslash2\mathbb{Z}$,
			we have
			\begin{enumerate}[(a)]
				\item$Z_{j}Z_{j}v\in{T(j,j;v)}$,
				\item$Z_{2j}(\beta_{2})v\in{T(j-1,j+1;v)}$.
			\end{enumerate}
			((b) is necessary for proving Proposition \ref{25+2}.)
		\end{prop}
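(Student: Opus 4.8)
The plan is to deduce (a) and (b) from the generalized commutation relation (\ref{thrm}), applied with $k=2$ to the root pairs $\alpha=\beta=\alpha_{1}$ (which governs the even--even monomials and the $\beta_{2}$-part) and $\alpha=\beta=\alpha_{3}$ (which governs the odd--odd monomials), together with Proposition \ref{pree} and one parity observation: since $j\in\mathbb{Z}\backslash2\mathbb{Z}$, an \emph{even} integer $k$ satisfies $k\ge j$ if and only if $k\ge j+1$. Hence, among length-$2$ even--even $Z$-monomials of total degree $2j$, the set $\tilde{S}(0;v)\cap\tilde{T}(j,j;v)$ exceeds $\tilde{S}(0;v)\cap\tilde{T}(j-1,j+1;v)$ by exactly the one monomial $Z_{j-1}Z_{j+1}v$; thus $S(0;j,j;v)=S(0;j-1,j+1;v)+\mathbb{C}\,Z_{j-1}Z_{j+1}v$ with $S(0;j-1,j+1;v)\subseteq T(j-1,j+1;v)$. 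I would establish (b) first and deduce (a) from it.

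For (b): Proposition \ref{pree}(c) with $i=j+1$, $i'=j-1$ gives $Z_{2j}(\beta_{2})v\in S(0;j,j;v)$, so by the parity observation $Z_{2j}(\beta_{2})v\equiv c\,Z_{j-1}Z_{j+1}v\pmod{T(j-1,j+1;v)}$ for a scalar $c$, and the task is to absorb the $Z_{j-1}Z_{j+1}v$ term. I would do this by reading off coefficients of $\zeta_{1}^{a}\zeta_{2}^{b}$ with $a+b=2j$ in (\ref{thrm}) for $\alpha=\beta=\alpha_{1}$: because $Z_{\textrm{odd}}(\alpha_{1})=0$ only even index shifts survive, while $\nu^{2}\alpha_{1}+\alpha_{1}=\beta_{2}$ and $\nu^{8}\alpha_{1}+\alpha_{1}=-\alpha_{2}-\alpha_{3}-\alpha_{4}$ lies in the $\nu$-orbit of $\beta_{2}$, so each such comparison expresses a combination of the ``bad'' classes $Z_{j-1}Z_{j+1}v$, $Z_{j+1}Z_{j-1}v$, $Z_{2j}(\beta_{2})v$ (the only length-$\le2$ monomials at degree $2j$ outside $T(j-1,j+1;v)$, once Proposition \ref{pree}(a)--(b) are used to push $Z_{j+1}Z_{j-1}v$ back to $Z_{j-1}Z_{j+1}v$) in terms of elements of $T(j-1,j+1;v)$. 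Taking the comparisons at $\zeta_{1}^{j+1}\zeta_{2}^{j-1}$ and $\zeta_{1}^{j+2}\zeta_{2}^{j-2}$ (and, if needed, a further one), together with the relations obtained in the proof of Proposition \ref{pree} for $i=j+1,i'=j-1$, yields a linear system for the bad classes modulo $T(j-1,j+1;v)$ with coefficients explicit in $c(\alpha_{1},\alpha_{1},0),c(\alpha_{1},\alpha_{1},1),\dots$, the $\varepsilon$'s and powers of $\omega$; once its governing determinant is checked to be nonzero, every bad class, in particular $Z_{2j}(\beta_{2})v$, lies in $T(j-1,j+1;v)$.

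For (a): apply (\ref{thrm}) with $\alpha=\beta=\alpha_{3}$, $k=2$, and compare the coefficient of $\zeta_{1}^{j+1}\zeta_{2}^{j-1}$. Since $Z_{\textrm{even}}(\alpha_{3})=0$, only odd shifts contribute; the shift $p=1$ produces $c(\alpha_{3},\alpha_{3},1)(Z_{j}Z_{j}-Z_{j-2}Z_{j+2})v$, and every remaining term is either a length-$2$ odd--odd monomial $Z_{j-2r}Z_{j+2r}v$ with $r\ge1$ --- which lies in $\tilde{T}(j,j;v)$, as $j+2r\ge j$ and the total degree is $2j$ --- or, on the right, a scalar multiple of $Z_{2j}(\beta_{2})v$ or of $Z_{2j}(\alpha_{1})v=Z_{2j}v$ (the two $\nu$-orbits into which the roots $\nu^{q}\alpha_{3}+\alpha_{3}$, $q\in C_{-1}(\alpha_{3},\alpha_{3})=\{1,3,7,9\}$, fall for $A^{(2)}_{5}$), plus a constant. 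Hence $c(\alpha_{3},\alpha_{3},1)\,Z_{j}Z_{j}v\equiv(\text{scalar})\,Z_{2j}(\beta_{2})v\pmod{T(j,j;v)}$, and since $Z_{2j}(\beta_{2})v\in T(j-1,j+1;v)\subseteq T(j,j;v)$ by (b), while $c(\alpha_{3},\alpha_{3},1)\ne0$ --- read off from $F(\alpha_{3},\alpha_{3},\zeta)=\tfrac{1-\zeta}{1+\zeta}\prod_{t\in\{1,3,7,9\}}(1-\omega^{-t}\zeta)^{-1/2}$, whose linear coefficient is readily seen to equal $-\tfrac{3}{2}$ --- we get $Z_{j}Z_{j}v\in T(j,j;v)$.

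The decisive point is the determinant check in (b): identifying precisely which length-$2$ monomials of degree $2j$ survive outside $T(j-1,j+1;v)$ (the parity shift ``$k\ge j\Leftrightarrow k\ge j+1$'' for even $k$ is the subtle ingredient, and is what upgrades $T(j,j;v)$ to $T(j-1,j+1;v)$), and then verifying the determinant is nonzero for \emph{every} residue of $j$ modulo $10$ --- including the finitely many degenerate residues where an individual coefficient such as $\varepsilon(\nu^{2}\alpha_{1},\alpha_{1})(\omega^{-2(j+1)}-\omega^{-2(j-1)})$ vanishes, which must be handled by switching to another coefficient extraction. Given (b), (a) follows mechanically as above.
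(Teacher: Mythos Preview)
Your plan for (a), assuming (b), is essentially the paper's relation (\ref{5-1}) and is correct in spirit. One computational slip: for $s=3$ one has $\langle\nu^{t}\alpha_{3},\alpha_{3}\rangle=+1$ at $t\in\{2,4,6,8\}$, so
\[
F(\alpha_{3},\alpha_{3},\zeta)=\frac{1-\zeta}{1+\zeta}\prod_{t\in\{1,3,7,9\}}(1-\omega^{-t}\zeta)^{-1/2}\prod_{t\in\{2,4,6,8\}}(1-\omega^{-t}\zeta)^{1/2},
\]
and the linear coefficient is $c(\alpha_{3},\alpha_{3},1)=-1$, not $-3/2$. This does not affect the conclusion.

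Where your route diverges from the paper is (b). The paper does \emph{not} use $\alpha=\beta=\alpha_{1}$ here at all: it stays with $\alpha=\beta=\alpha_{3}$, extracts the coefficients of $\zeta_{1}^{j+1}\zeta_{2}^{j-1}$, $\zeta_{1}^{j+2}\zeta_{2}^{j-2}$, $\zeta_{1}^{j+3}\zeta_{2}^{j-3}$, and obtains a $3\times3$ system in the three ``bad'' classes $Z_{j+2}Z_{j-2}$, $Z_{j}Z_{j}$, $Z_{2j}(\beta_{2})$ modulo $T(j-1,j+1;v)$ (the length-one pieces $Z_{2j}(\alpha_{1})$ coming from $q=3,7$ are absorbed). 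The determinant factors as $h_{1}(j+1,j-1)\cdot(-2\omega^{3}+2\omega^{2}+2)$, which is nonzero for every $j$; no residue-by-residue check is needed. This single computation yields (a) and (b) simultaneously.

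Your proposed route for (b) via $\alpha=\beta=\alpha_{1}$ has a genuine gap. The two extractions at $\zeta_{1}^{j+1}\zeta_{2}^{j-1}$ and $\zeta_{1}^{j+2}\zeta_{2}^{j-2}$ are exactly the relations already used to prove Proposition~\ref{pree} for $i=j+1,\ i'=j-1$; modulo $T(j-1,j+1;v)$ they give only two linear conditions on the three unknowns $Z_{j+1}Z_{j-1}v$, $Z_{j-1}Z_{j+1}v$, $Z_{2j}(\beta_{2})v$. Invoking Proposition~\ref{pree}(a)--(b) to ``push $Z_{j+1}Z_{j-1}v$ back to $Z_{j-1}Z_{j+1}v$'' adds no new information, since those propositions were derived from the very same extractions. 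Your suggested ``further one'' at $\zeta_{1}^{j+3}\zeta_{2}^{j-3}$ introduces the new unknown $Z_{j+3}Z_{j-3}v$, so the system does not close at $3\times3$; one must pass to (at least) a $4\times4$ system and verify \emph{its} determinant. You neither set this up nor compute it, so (b) is not established along this line. The paper's $\alpha_{3}$-only argument is both shorter and avoids the case analysis you anticipated.
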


		\begin{proof}
			By assigning $\alpha=\beta=\alpha_{3}$ and $k=2$ in (\ref{thrm}), 
			and comparing the coefficients of
			$\zeta_{1}^{j+1}\zeta_{2}^{j-1}$, 
			$\zeta_{1}^{j+2}\zeta_{2}^{j-2}$ and
			$\zeta_{1}^{j+3}\zeta_{2}^{j-3}$ respectively,
			we get
			\begin{align}
				&(c(\alpha_{3},\alpha_{3},1)Z_{j}Z_{j}
				-h_{1}(j+1,j-1)
				Z_{2j}(\beta_{2}))v
				\in{T(j-1,j+1;v)},\label{5-1}\\
				&(c(\alpha_{3},\alpha_{3},0)Z_{j+2}Z_{j-2}
				+c(\alpha_{3},\alpha_{3},2)Z_{j}Z_{j}
				\notag\\*
				&
				-h_{1}(j+2,j-2)Z_{2j}(\beta_{2}))v
				\in{T(j-1,j+1;v)},\label{5-2}\\
				&(c(\alpha_{3},\alpha_{3},1)Z_{j+2}Z_{j-2}
				+c(\alpha_{3},\alpha_{3},3)Z_{j}Z_{j}\notag\\*
				&-h_{1}(j+3,j-3)Z_{2j}(\beta_{2}))v
				\in{T(j-1,j+1;v)}.\label{5-3}
			\end{align}

			The determinant of the matrix consisting of the coefficients of
			$Z_{j+2}Z_{j-2}$, 
			$Z_{j}Z_{j}$
			and $Z_{2j}(\beta_{2})$ in (\ref{5-1}), (\ref{5-2}), and (\ref{5-3}) is,
			\begin{align*}
				&
				\begin{vmatrix}
					0 & c(\alpha_{3},\alpha_{3},0) & c(\alpha_{3},\alpha_{3},1)\\
					c(\alpha_{3},\alpha_{3},1)
					& c(\alpha_{3},\alpha_{3},2)
					& c(\alpha_{3},\alpha_{3},3)\\
					h_{1}(j+1,j-1)
					& h_{1}(j+2,j-2)
					& h_{1}(j+3,j-3)\\
				\end{vmatrix}\\
				&=h_{1}(j+1,j-1)(-2\omega^3 + 2\omega^2 + 2)\neq0.
			\end{align*}
			Thus, we obtain
			\begin{align}\label{beta_2}
				Z_{j}Z_{j}v, Z_{2j}(\beta_{2})v
				\in{T(j-1,j+1;v)}(\subseteq{T(j,j;v)}).
			\end{align}
		\end{proof}

		\begin{prop}\label{25+2}
			We have $Z_{i}Z_{i+2}v\in{{T}(i,i+2;v)}$
			for $i\in2\mathbb{Z}$.
		\end{prop}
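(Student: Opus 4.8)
The goal is to express $Z_iZ_{i+2}v$ through $Z$-monomials that are strictly larger in $\le_{T}$ together with shorter ones, i.e.\ to show $Z_iZ_{i+2}v\in T(i,i+2;v)$. The plan is to apply the generalized commutation relation (\ref{thrm}) with $\alpha=\beta=\alpha_1$ and $k=2$ twice, comparing the coefficients of $\zeta_1^{i+2}\zeta_2^{i}$ and of $\zeta_1^{i+3}\zeta_2^{i-1}$, read off a $2\times 2$ linear system for the pair $Z_{i+2}Z_iv$, $Z_iZ_{i+2}v$ modulo $T(i,i+2;v)$, and solve it by checking that a small determinant of structure constants is nonzero. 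Throughout we use $Z_{\textrm{odd}}(\alpha_1)=0$ on $L(\Lambda_0)\otimes L(\Lambda_1)$ from (\ref{tensor}), which kills half of the terms.

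Comparing the coefficient of $\zeta_1^{i+2}\zeta_2^{i}$: only even shifts $p$ survive, the terms with $p=0$ and $p=2$ are $Z_{i+2}Z_iv$ and $Z_iZ_{i+2}v$, and every other surviving term has right index exceeding $i+2$, hence is $>_{T}(i,i+2)$ and lies in $T(i,i+2;v)$. On the right-hand side of (\ref{thrm}), $C_{-1}(\alpha_1,\alpha_1)=\{2,m-2\}$ and the only operator appearing is $Z_{2i+2}(\nu^q\alpha_1+\alpha_1)$; since $\nu^2\alpha_1+\alpha_1=\beta_2$ and $\nu^{m-2}\alpha_1+\alpha_1$ lies in the $\nu$-orbit of $\beta_2$, this is a scalar multiple of $Z_{2i+2}(\beta_2)v=Z_{2(i+1)}(\beta_2)v$, and since $i+1$ is odd, (\ref{beta_2}) with $j=i+1$ gives $Z_{2(i+1)}(\beta_2)v\in T((i+1)-1,(i+1)+1;v)=T(i,i+2;v)$, while the leftover constant times $v$ is a length-$0$ monomial. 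Hence the right-hand side lies in $T(i,i+2;v)$, and we obtain
\[
c(\alpha_1,\alpha_1,0)\,Z_{i+2}Z_iv+\bigl(c(\alpha_1,\alpha_1,2)-c(\alpha_1,\alpha_1,0)\bigr)Z_iZ_{i+2}v\in T(i,i+2;v).
\]
Comparing instead the coefficient of $\zeta_1^{i+3}\zeta_2^{i-1}$ (both exponents now odd): only odd shifts $p$ survive, the terms with $p=1$ and $p=3$ are $Z_{i+2}Z_iv$ and $Z_iZ_{i+2}v$, every other surviving term again has right index at least $i+4$, and the right-hand side is once more a scalar multiple of $Z_{2(i+1)}(\beta_2)v$ plus a constant, hence in $T(i,i+2;v)$; this gives
\[
c(\alpha_1,\alpha_1,1)\,Z_{i+2}Z_iv+c(\alpha_1,\alpha_1,3)\,Z_iZ_{i+2}v\in T(i,i+2;v).
\]

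These two relations form a $2\times 2$ system in $Z_{i+2}Z_iv$ and $Z_iZ_{i+2}v$ modulo $T(i,i+2;v)$, so it suffices to show
\[
c(\alpha_1,\alpha_1,0)\,c(\alpha_1,\alpha_1,3)-c(\alpha_1,\alpha_1,1)\bigl(c(\alpha_1,\alpha_1,2)-c(\alpha_1,\alpha_1,0)\bigr)\neq 0,
\]
after which both unknowns, in particular $Z_iZ_{i+2}v$, lie in $T(i,i+2;v)$. This nonvanishing is the one genuine computation and the main obstacle. Reading the pairings $\langle\nu^t\alpha_1,\alpha_1\rangle$ off the $\nu$-orbit table of $\alpha_1$ gives, for $A^{(2)}_{5}$ where $m=10$, the closed form $F(\alpha_1,\alpha_1,\zeta)=\dfrac{1-\zeta}{1+\zeta}\Bigl(\dfrac{1+\phi\zeta+\zeta^2}{1-\phi\zeta+\zeta^2}\Bigr)^{1/2}$ with $\phi=\omega^2+\omega^{-2}=2\cos\frac{2\pi}{5}$; expanding to order $\zeta^3$ and using $\phi^2=1-\phi$, the determinant collapses to $2\phi(\phi-2)$, which is nonzero. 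This is the same type of root-of-unity determinant check already carried out in Proposition \ref{pree} (via $f(2),f(4)$) and in (\ref{5-1})--(\ref{5-3}); if a wider margin is desired one can add a third comparison, e.g.\ of $\zeta_1^{i+4}\zeta_2^{i-2}$, and run a $3\times 3$ determinant exactly as in the preceding proposition.
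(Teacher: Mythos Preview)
Your proof is correct and follows essentially the same route as the paper: the same two coefficient comparisons in (\ref{thrm}) with $\alpha=\beta=\alpha_1$, the same elimination of $Z_{2i+2}(\beta_2)v$ via (\ref{beta_2}), and the same $2\times2$ determinant, which you write as $2\phi(\phi-2)=2-6\phi$ with $\phi=\omega^2+\omega^{-2}=\omega^2-\omega^3$ and which the paper records as $6\omega^3-6\omega^2+2$.
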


		\begin{proof}
			By assigning $\alpha=\beta=\alpha_{1}$ and $k=2$ in (\ref{thrm}),
			and comparing the coefficients of
			$\zeta_{1}^{i+2}\zeta_{2}^{i}$ and $\zeta_{1}^{i+3}\zeta_{2}^{i-1}$ respectively,
			we get
			\begin{align}
				&(c(\alpha_{1},\alpha_{1},0)Z_{i+2}Z_{i}
				+(c(\alpha_{1},\alpha_{1},2)
				-c(\alpha_{1},\alpha_{1},0))Z_{i}Z_{i+2}\notag\\*
				&
				-h_{2}(2i+4,2i)
				Z_{2i+2}(\beta_{2}))v
				\in{T(i,i+2;v)},\label{cat}\\
				&(c(\alpha_{1},\alpha_{1},1)Z_{i+2}Z_{i}
				+c(\alpha_{1},\alpha_{1},3)Z_{i}Z_{i+2}\notag\\*
				&-h_{2}(2i+6,2i-2)
				Z_{2i+2}(\beta_{2}))v
				\in{T(i,i+2;v)}.\label{cat'}
			\end{align}
			From (\ref{beta_2}), we have
			\begin{align*}
				Z_{2i+2}(\beta_{2})v=Z_{2(i+1)}(\beta_{2})v\in{T((i+1)-1,(i+1)+1;v)}=T(i,i+2;v),
			\end{align*}
			thus,  we have to calculate the determinant of the matrix
			consisting of the coefficients of
			$Z_{i+2}(\alpha_{1})Z_{i}(\alpha_{1})$ and $Z_{i}(\alpha_{1})Z_{i+2}(\alpha_{1})$
			in (\ref{cat}) and (\ref{cat'}).
			\begin{align*}
				\begin{vmatrix}
					c(\alpha_{1},\alpha_{1},0)
					& c(\alpha_{1},\alpha_{1},1)\\
					c(\alpha_{1},\alpha_{1},2)-c(\alpha_{1},\alpha_{1},0)
					& c(\alpha_{1},\alpha_{1},3)
				\end{vmatrix}
				=6\omega^3-6\omega^2+2\neq0.
			\end{align*}
			Therefore, we obtain
			$Z_{i}Z_{i+2}v\in{T(i,i+2;v)}$ for $i\in2\mathbb{Z}$.
		\end{proof}

		\begin{prop}
			We have $Z_{i}Z_{i}v\in{{T}(i,i;v)}$
			for $i\in2\mathbb{Z}$. 	
		\end{prop}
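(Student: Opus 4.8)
The plan is to rerun the argument of the preceding proposition with $\alpha_{3}$ replaced by $\alpha_{1}$: set $\alpha=\beta=\alpha_{1}$ and $k=2$ in the generalized commutation relation (\ref{thrm}). As already recorded in Proposition \ref{pree}, $C_{-1}(\alpha_{1},\alpha_{1})=\{2,m-2\}$, and from the $\nu$-orbit table together with $Z(\nu^{p}\gamma,\zeta)=Z(\gamma,\omega^{p}\zeta)$ one checks that $\nu^{2}\alpha_{1}+\alpha_{1}=\beta_{2}$ and that $\nu^{m-2}\alpha_{1}+\alpha_{1}=-(\alpha_{2}+\alpha_{3}+\alpha_{4})$ lies in the $\nu$-orbit of $\beta_{2}$; hence, on extracting the coefficient of $\zeta_{1}^{a}\zeta_{2}^{b}$ with $a+b=2i$ from (\ref{thrm}), the right-hand side contributes only a scalar multiple of $Z_{2i}(\beta_{2})$ together with a constant. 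On the left-hand side $Z_{\textrm{odd}}(\alpha_{1})=0$ on $L$ by (\ref{tensor}), so only the products $Z_{\textrm{even}}(\alpha_{1})Z_{\textrm{even}}(\alpha_{1})$ survive, and among them every ``sorted'' monomial $Z_{i-2k}Z_{i+2k}v$ with $k\ge1$ already lies in $T(i-1,i+1;v)$, since $(i-1,i+1)<_{T}(i-2k,i+2k)$.

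Next I would compare the coefficients of $\zeta_{1}^{i+1}\zeta_{2}^{i-1}$, $\zeta_{1}^{i+2}\zeta_{2}^{i-2}$ and $\zeta_{1}^{i+3}\zeta_{2}^{i-3}$. By the previous paragraph these yield, modulo $T(i-1,i+1;v)$, three linear relations among $Z_{i+2}Z_{i-2}v$, $Z_{i}Z_{i}v$ and $Z_{2i}(\beta_{2})v$, of exactly the same shape as (\ref{5-1}), (\ref{5-2}), (\ref{5-3}) but with $c(\alpha_{3},\alpha_{3},\cdot)$ replaced by $c(\alpha_{1},\alpha_{1},\cdot)$ and $h_{1}$ by $h_{2}$; in particular the coefficient of $\zeta_{1}^{i+1}\zeta_{2}^{i-1}$ gives $(c(\alpha_{1},\alpha_{1},1)Z_{i}Z_{i}-h_{2}(\cdot,\cdot)Z_{2i}(\beta_{2}))v\in T(i-1,i+1;v)$, with vanishing $Z_{i+2}Z_{i-2}$-coefficient. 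If the $3\times3$ determinant of the coefficient matrix (built from $c(\alpha_{1},\alpha_{1},0),\dots,c(\alpha_{1},\alpha_{1},3)$ and the three values of $h_{2}$) is nonzero, the system can be solved for all three unknowns, giving in particular $Z_{i}Z_{i}v\in T(i-1,i+1;v)\subseteq T(i,i;v)$ (and, as a byproduct, $Z_{2i}(\beta_{2})v\in T(i-1,i+1;v)$, the even-$i$ analogue of part (b) of the preceding proposition).

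The main obstacle is precisely this determinant computation. One has to evaluate $c(\alpha_{1},\alpha_{1},p)$, the Taylor coefficients of $F(\alpha_{1},\alpha_{1},\zeta)=\prod_{t\in\mathbb{Z}/10\mathbb{Z}}(1-\omega^{-t}\zeta)^{\langle\nu^{t}\alpha_{1},\alpha_{1}\rangle/2}$ — the exponents $\langle\nu^{t}\alpha_{1},\alpha_{1}\rangle$ are read off the $\nu$-orbit table and equal $2,0,-1,1,0,-2,0,1,-1,0$ for $t=0,\dots,9$ — and verify that the resulting element of $\mathbb{Q}(\omega)$ does not vanish, just as the scalar $-2\omega^{3}+2\omega^{2}+2$ was checked in the odd-index case; one expects the even-case determinant to factor similarly. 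For instance $c(\alpha_{1},\alpha_{1},0)=1$ and a short computation gives $c(\alpha_{1},\alpha_{1},1)=-2+\tfrac{1}{2}(\omega^{2}+\omega^{8}-\omega^{3}-\omega^{7})=-2+2\cos(2\pi/5)\ne0$, so the remaining work is a finite computation in $\mathbb{Q}(\omega)$.
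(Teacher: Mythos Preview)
Your plan is natural—mirror the odd-index argument with $\alpha_{1}$ in place of $\alpha_{3}$—but it breaks down at exactly the step you flagged as ``the main obstacle'': the $3\times3$ determinant is \emph{zero}. Write $c_{p}=c(\alpha_{1},\alpha_{1},p)$ and set $A=\omega^{2}+\omega^{-2}$ (a root of $A^{2}+A-1=0$, since $\omega^{2}$ is a primitive fifth root of unity). The identity $F(\alpha_{1},\alpha_{1},-\zeta)\,F(\alpha_{1},\alpha_{1},\zeta)=1$, which follows from $\omega^{5}=-1$ and the exponent pattern $2,0,-1,1,0,-2,0,1,-1,0$ you recorded, forces $c_{2}=c_{1}^{2}/2$; together with $c_{0}=1$, $c_{1}=A-2$, and $c_{3}=3A-\tfrac{7}{2}$ (obtained from the log-expansion), one finds
\[
c_{1}(c_{0}+c_{1})A+(c_{0}c_{3}-c_{1}c_{2})=0,
\]
and since $h_{2}(2i+4,2i-4)=A\,h_{2}(2i+2,2i-2)$ and $h_{2}(2i+6,2i-6)=-A\,h_{2}(2i+2,2i-2)$, this is exactly (up to a nonzero factor) your $3\times3$ determinant. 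The three relations are linearly dependent: they let you express $Z_{i}Z_{i}v$ and $Z_{i+2}Z_{i-2}v$ as multiples of $Z_{2i}(\beta_{2})v$ modulo $T(i-1,i+1;v)$, but they do \emph{not} force $Z_{2i}(\beta_{2})v$ itself into $T(i-1,i+1;v)$.

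The paper avoids this degeneracy by not trying to handle $Z_{2i}(\beta_{2})$ with $\alpha_{1}$-relations at all. Instead it goes back to $\alpha=\beta=\alpha_{3}$ and compares the coefficients of $\zeta_{1}^{j+2}\zeta_{2}^{j}$ and $\zeta_{1}^{j+3}\zeta_{2}^{j-1}$ for $j$ odd; this is a $2\times2$ system in $Z_{j+2}Z_{j}v$ and $Z_{2j+2}(\beta_{2})v$ whose determinant is $h_{1}(j+2,j-2)(\omega^{-1}+\omega+1)\neq0$, yielding $Z_{2j+2}(\beta_{2})v\in T(j+1,j+1;v)$. Setting $j=i-1$ gives $Z_{2i}(\beta_{2})v\in T(i,i;v)$, and then the single $\alpha_{1}$-relation from $\zeta_{1}^{i+1}\zeta_{2}^{i-1}$ (your first equation), namely $(c_{1}Z_{i}Z_{i}-h_{2}(2i+2,2i-2)Z_{2i}(\beta_{2}))v\in T(i,i;v)$ with $c_{1}=A-2\neq0$, finishes the proof. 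So the missing idea is that the required control of $Z_{2i}(\beta_{2})$ must be imported from the $\alpha_{3}$-sector.
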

	
		\begin{proof}
			First we show $Z_{2j+2}(\beta_{2})v\in{T(j+1,j+1;v)}$
			for $j\in\mathbb{Z}\backslash2\mathbb{Z}$.
			By assigning $\alpha=\beta=\alpha_{3}$ and $k=2$ in (\ref{thrm}), 
			and comparing the coefficients of
			$\zeta_{1}^{j+2}\zeta_{2}^{j}$ and $\zeta_{1}^{j+3}\zeta_{2}^{j-1}$ respectively,
			we get
			\begin{align}
				&(c(\alpha_{3},\alpha_{3},0)Z_{j+2}Z_{j}
				-h_{1}(j+2,j)
				Z_{2j+2}(\beta_{2}))v
				\in{T(j+1,j+1;v)},\label{5_4}\\
				&(c(\alpha_{3},\alpha_{3},1)Z_{j+2}Z_{j}
				-h_{1}(j+3,j-1)
				Z_{2j+2}(\beta_{2}))v
				\in{T(j+1,j+1;v)}.\label{5_5}
			\end{align}
			The determinant of the matrix consisting of the coefficients of
			$Z_{j+2}(\alpha_{3})Z_{j}(\alpha_{3})$ and $Z_{2j+2}(\beta_{2})$
			in (\ref{5_4}) and (\ref{5_5}) is,
			\begin{align*}
				\begin{vmatrix}
					c(\alpha_{3},\alpha_{3},0) & c(\alpha_{3},\alpha_{3},1)\\
					h_{1}(j+2,j-2) & h_{1}(j+3,j-3)
				\end{vmatrix}
				&=h_{1}(j+2,j-2)(\omega^{-1}+\omega+1)\neq0.
			\end{align*}
			Thus,  for $j\in\mathbb{Z}\backslash2\mathbb{Z}$ we obtain 
			\begin{align}\label{dog}
				Z_{2j+2}(\beta_{2})v\in{T(j+1,j+1;v)},
			\end{align}
			and now,  we show $Z_{i}Z_{i}v\in{{T}(i,i;v)}$
			for $i\in2\mathbb{Z}$. 
			By assigning $\alpha=\beta=\alpha_{1}$ and $k=2$ in (\ref{thrm}), 
			and comparing the coefficients of $\zeta_{1}^{i+1}\zeta_{2}^{i-1}$, 
			we get
			\begin{align*}
				(c(\alpha_{1},\alpha_{1},1)Z_{i}Z_{i}
				-h_{2}(2i+2,2i-2)
				Z_{2i}(\beta_{2}))v
				\in{T(i,i;v)}.
			\end{align*}
			We have $c(\alpha_{1},\alpha_{1},1)\neq0$, and from (\ref{dog}), we have 
			\begin{align*}
				Z_{2i}(\beta_{2})v=Z_{2(i-1)+2}(\beta_{2})v\in{T((i-1)+1,(i-1)+1;v)}=T(i,i;v).
			\end{align*}
			Thus
			we obtain $Z_{i}Z_{i}v\in{T(i,i;v)}$ for $i\in2\mathbb{Z}$.
		\end{proof}

			The condition $i_{l}\le{-3}$ in the case of $L(\Lambda_{3})$
			follows from the principal character $\chi(L(\Lambda_{3}))$.
			By the G\"ollnitz-Gordon identities,  we see that the given sets are bases.

	\subsection{The case of type $A^{(2)}_{7}$}\label{7th}
		We prove Theorem \ref{Thm7th}. 
			In this case, the remaining conditions are as follows.
			\begin{enumerate}
				\item
					$Z_{j}Z_{j'}v\in{{T}(j,j';v)}$
					for $j,j'\in\mathbb{Z}\backslash2\mathbb{Z}$ such that $j\ge j'$.
				\item
					$Z_{i}Z_{i}v\in{{T}(i,i;v)}$ for $i\in2\mathbb{Z}$.
				\item
					$i_{l}\le-2$ in the case of $L(\Lambda_{3})$.
			\end{enumerate}
		For $l,m,n\in\mathbb{Z}$, define $h_{1}(l,m,n)$ and $h_{2}(l,m,n)$ as follows.
		\begin{align*}
			h_{1}(l,m,n)&=\frac{1}{14}\varepsilon(\nu^{l}\alpha_{4},\alpha_{4})
			(\omega^{-m}-\omega^{-n}),\\
			h_{2}(l,m,n)&=\frac{1}{14}\varepsilon(\nu^{l}\alpha_{1},\alpha_{1})
			(\omega^{-m}-\omega^{-n}).
		\end{align*}
		\begin{prop}
			We have $Z_{j}Z_{j'}v\in T(j,j';v)$
			for $j,j'\in\mathbb{Z}\backslash2\mathbb{Z}$ such that $j>j'$.
		\end{prop}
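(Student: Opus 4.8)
The plan is to mimic the argument already used for type $A^{(2)}_{5}$ in the corresponding proposition, namely to apply the generalized commutation relation (\ref{thrm}) with $\alpha=\beta=\alpha_{4}$ and $k=2$, and then extract the single coefficient that expresses the disfavored monomial $Z_{j}Z_{j'}$ (with $j>j'$, both odd) in terms of monomials that are strictly higher in the order $\le_{T}$ together with an ``error'' term built from $Z_{\textrm{even}}(\beta_{n})$-operators. First I would compute the set $C_{-1}(\alpha_{4},\alpha_{4})$ using Table 1 (the $\nu$-orbit of $\alpha_{s}=\alpha_{4}$) — this plays the role that $C_{-1}(\alpha_{3},\alpha_{3})$ played in the $A^{(2)}_{5}$ case — so that the right-hand side of (\ref{thrm}) is a sum of terms $Z_{j+j'}(\nu^{q}\alpha_{4}+\alpha_{4})$, each of which is either $Z_{j+j'}(\beta_{n})$ for some $2\le n\le s-1=3$ or lies in $T(j,j';v)$ already.

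Next I would compare the coefficient of $\zeta_{1}^{j}\zeta_{2}^{j'}$ on both sides. On the left side, since $j>j'$, every term $Z_{j-p}(\alpha_{4})Z_{j'+p}(\alpha_{4})$ with $p\ge 1$ is strictly larger in $\le_{T}$ than $(j,j')$ (its tail sum $j'+p>j'$, and the total $j+j'$ is unchanged), hence lies in $\tilde T(j,j';v)$, and similarly the ``reversed'' terms $Z_{j'-p}(\alpha_{4})Z_{j+p}(\alpha_{4})$ land in $T(j,j';v)$; the $p=0$ term on the $Z(\beta,\zeta_{2})Z(\alpha,\zeta_{1})$ side is also of the disfavored type but with $j'$ on the left, which is again $\le_{T}$-larger. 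So modulo $T(j,j';v)$ the identity reads (schematically, with $c(\alpha_{4},\alpha_{4},0)=1$)
\begin{align*}
 Z_{j}Z_{j'}v \equiv \bigl(\text{constant}\bigr)\cdot\sum_{n=2}^{3} g(j,j',n)\,Z_{j+j'}(\beta_{n})v \pmod{T(j,j';v)}.
\end{align*}
Finally I would invoke Proposition \ref{beta_n} to conclude $Z_{\textrm{even}}(\beta_{n})v\in\SPAN(\tilde S(0;v))$ — but that only gives membership in a span of $Z$-monomials, not directly in $T(j,j';v)$; to close the gap I would instead use the sharper statements already available for $s=4$, i.e. the analogue of Proposition \ref{pree}(c) and the $A^{(2)}_{7}$-specific versions of (\ref{beta_2})/(\ref{dog}) (which will be proved, as in \S\ref{5th}, by comparing several coefficients $\zeta_{1}^{j+1+r}\zeta_{2}^{j'-1-r}$ and checking a Vandermonde-type determinant built from $h_{1}$ is nonzero). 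Substituting $Z_{j+j'}(\beta_{n})v\in T(j,j';v)$ back into the displayed congruence yields $Z_{j}Z_{j'}v\in T(j,j';v)$.

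The main obstacle I anticipate is the bookkeeping on the right-hand side of (\ref{thrm}): for $A^{(2)}_{7}$ the set $C_{-1}(\alpha_{4},\alpha_{4})$ is larger than in the $A^{(2)}_{5}$ case, so there are several roots $\nu^{q}\alpha_{4}+\alpha_{4}$ to identify — some will be the various $\beta_{n}$ and I must make sure each associated $Z_{j+j'}(\beta_{n})$ has already been shown to lie in $T(j,j';v)$ via the correct shift (writing $j+j' = (j+1)+(j'-1)$ and appealing to the $A^{(2)}_{7}$ analogue of Proposition \ref{pree}), and the constants $g(j,j',n)$, $\varepsilon(\nu^{q}\alpha_{4},\alpha_{4})$, $\omega^{-2qj}$ must be tracked well enough only to verify the relevant $2\times2$ (or $3\times3$) determinant is nonzero — exactly as the identities $f(2),f(4)\neq0$ and the determinants $6\omega^{3}-6\omega^{2}+2\neq0$, $h_{1}(\cdots)(\omega^{-1}+\omega+1)\neq 0$ were checked in \S\ref{5th}. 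No genuinely new idea is needed beyond the $A^{(2)}_{5}$ template; the work is purely the explicit root/$\varepsilon$/$\omega$ computation in the principal realization of $A^{(2)}_{7}$.
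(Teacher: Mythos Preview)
Your approach is essentially the paper's, but the two-step framing --- first establishing $Z_{j+j'}(\beta_{3})v\in T(j,j';v)$ separately and then substituting into the single-coefficient identity --- cannot be executed as described. The only relation producing $Z_{j+j'}(\beta_{3})$ as a length-one error term is precisely the $\alpha_{4},\alpha_{4}$ instance of (\ref{thrm}), and every coefficient extraction from it simultaneously carries a $Z_{j}Z_{j'}$ contribution (or a $T$-larger shift thereof); the route via Proposition~\ref{beta_n} that you consider and reject would, as you note, produce length-$3$ monomials, which do not lie in $T(j,j';v)$. So there is no independent handle on $\beta_{3}$.

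The paper therefore does what your final paragraph already anticipates: it extracts \emph{two} coefficients, $\zeta_{1}^{j}\zeta_{2}^{j'}$ and $\zeta_{1}^{j+1}\zeta_{2}^{j'-1}$, disposes of $Z_{j+j'}(\beta_{2})v$ via Proposition~\ref{pree}(c) (writing $j+j'=(j+1)+(j'-1)$), and checks that the $2\times 2$ determinant in the remaining unknowns $Z_{j}Z_{j'}$ and $Z_{j+j'}(\beta_{3})$ --- with entries $c(\alpha_{4},\alpha_{4},0),\,c(\alpha_{4},\alpha_{4},1)$ against $h_{1}(1,j,j'),\,h_{1}(1,j+1,j'-1)$ --- is nonzero, yielding both memberships in $T(j,j';v)$ at once. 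The computational content you describe is correct; only the order of operations needs to be collapsed from sequential to simultaneous.
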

		\begin{proof}
			Assigning $\alpha=\beta=\alpha_{4}$ and $k=2$ in (\ref{thrm}), 
			and comparing the coefficients of
			$\zeta_{1}^{j}\zeta_{2}^{j'}$ and $\zeta_{1}^{j+1}\zeta_{2}^{j'-1}$ respectively, 
			we get
			\begin{align}
				&(c(\alpha_{4},\alpha_{4},0)Z_{j}Z_{j'}
				-h_{1}(3,j-2j',-2j+j')
				Z_{j+j'}(\beta_{2})\notag\\*
				&
				-h_{1}(1,j,j')
				Z_{j+j'}(\beta_{3}))v
				\in{T(j,j';v)},\label{7_1}\\
				&(c(\alpha_{4},\alpha_{4},1)Z_{j}Z_{j'}
				-h_{1}(3,j-2j'+3,-2j+j'-3)
				Z_{j+j'}(\beta_{2})
				\notag\\*
				&
				-h_{1}(1,j+1,j'-1)
				Z_{j+j'}(\beta_{3}))v
				\in{T(j,j';v)}.\label{7_2}
			\end{align}
			From Proposition \ref{pree} (c), we have 
			\begin{align*}
				Z_{j+j'}(\beta_{2})v=Z_{(j+1)+(j'-1)}(\beta_{2})v\in{T((j+1)-1,(j'-1)+1;v)}=T(j,j';v).
			\end{align*}
			Thus, 
			it is sufficient
			to show
			the following determinant of the matrix consisting of the coefficients of 
			$Z_{j}(\alpha_{4})Z_{j'}(\alpha_{4})$ and $Z_{j+j'}(\beta_{3})$
			in (\ref{7_1}) and (\ref{7_2}) is not 0.
			\begin{align*}
				\begin{vmatrix}
					c(\alpha_{4},\alpha_{4},0) & c(\alpha_{4},\alpha_{4},1)\\
					h_{1}(1,j,j')& h_{1}(1,j+1,j'-1)
				\end{vmatrix}
				&=\frac{1}{14}\varepsilon(\nu\alpha_{1},\alpha_{1})
				\omega^{-j-1}(1-\omega^{j-j'+1})(1+\omega).
			\end{align*}
			As $j-j'\in2\mathbb{Z}$, 
			we have ${j-j'}\not\equiv{-1}\textrm{ (mod } 14)$ and $1-\omega^{j-j'+1}\neq0$. 
			Therefore, we obtain $Z_{j}Z_{j'}v\in{T(j,j';v)}$
			for $j,j'\in\mathbb{Z}\backslash2\mathbb{Z}$ such that $j>j'$.
		\end{proof}

		\begin{prop}
			We have $Z_{j}Z_{j}v\in T(j,j;v)$
			for $j\in\mathbb{Z}\backslash2\mathbb{Z}$. 
		\end{prop}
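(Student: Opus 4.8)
The plan is to argue exactly as in the preceding propositions and in \S\ref{5th}: specialize the generalized commutation relation \eqref{thrm} to $\alpha=\beta=\alpha_4$, $k=2$, compare three coefficients, and invert a $3\times3$ matrix. Throughout $v\in\Omega(L)$ with $L=L(\Lambda_0)\otimes L(\Lambda_1)$, so the vanishings \eqref{tensor}, namely $Z_{\textrm{even}}(\alpha_4)=Z_{\textrm{odd}}(\beta_n)=0$ on $L$, are available. Concretely I would compare the coefficients of $\zeta_1^{j+1}\zeta_2^{j-1}$, $\zeta_1^{j+2}\zeta_2^{j-2}$ and $\zeta_1^{j+3}\zeta_2^{j-3}$ in \eqref{thrm}.

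First I would sort the terms each comparison produces. On the left-hand side of \eqref{thrm}, after applying \eqref{tensor}, the only surviving products $Z_aZ_bv$ that are $Z_jZ_jv$ or $Z_{j+2}Z_{j-2}v$; every other surviving product is a scalar multiple of $v$ or a sorted pair $Z_{j-2k}Z_{j+2k}v$ with $k\ge1$, and these all lie in $\tilde T(j,j;v)\subseteq T(j,j;v)$. On the right-hand side, the $\delta$-term contributes $Z_{2j}(\beta_1)$, $Z_{2j}(\beta_2)$ and $Z_{2j}(\beta_3)$: here $Z_{2j}(\beta_1)=Z_{2j}$ is a length-one monomial, while $Z_{2j}(\beta_2)v\in S(0;j,j;v)\subseteq T(j,j;v)$ by Proposition \ref{pree}(c) (applied with $i=j+1$, $i'=j-1$). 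Hence each comparison yields, modulo $T(j,j;v)$, a linear relation among $Z_{j+2}Z_{j-2}v$, $Z_jZ_jv$ and $Z_{2j}(\beta_3)v$, whose $Z$-side coefficients are among $c(\alpha_4,\alpha_4,0),\dots,c(\alpha_4,\alpha_4,3)$ and whose $\beta_3$-coefficient is, in each equation, a fixed linear combination of $\varepsilon$-values and powers of $\omega$, determined by which $q\in C_{-1}(\alpha_4,\alpha_4)=\{1,3,5,9,11,13\}$ satisfy that $\nu^q\alpha_4+\alpha_4$ lies in the $\nu$-orbit of $\beta_3$ — this last point being read off the $\nu$-orbit table. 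The resulting matrix has exactly the shape of the one in \S\ref{5th}: the row of coefficients of $Z_{j+2}Z_{j-2}$ is $\bigl(0,c(\alpha_4,\alpha_4,0),c(\alpha_4,\alpha_4,1)\bigr)$, the row of coefficients of $Z_jZ_j$ is $\bigl(c(\alpha_4,\alpha_4,1),c(\alpha_4,\alpha_4,2),c(\alpha_4,\alpha_4,3)\bigr)$, and the third row collects the three $\beta_3$-coefficients.

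It then suffices to show this $3\times3$ determinant is nonzero; granting that, Cramer's rule gives $Z_jZ_jv\in T(j,j;v)$ — and, as a byproduct, $Z_{j+2}Z_{j-2}v,\,Z_{2j}(\beta_3)v\in T(j,j;v)$, the analogue of the extra statement proved for $\beta_2$ in \S\ref{5th}. The numbers $c(\alpha_4,\alpha_4,p)$ are explicit Taylor coefficients of $F(\alpha_4,\alpha_4,\zeta)$ and the $\beta_3$-coefficients are explicit elements of $\mathbb{Q}(\omega)$, so the determinant is an explicit element of $\mathbb{Q}(\omega)$; evaluating it and checking nonvanishing — as in \S\ref{7th}, possibly after a short case distinction modulo $14$ — is the only substantive computation, and the one place that requires care is tracking the $\beta_3$-coefficients through the $\nu$-orbit table. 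Everything else is the $A^{(2)}_5$ argument of \S\ref{5th} with $\alpha_3$ and $m=10$ replaced by $\alpha_4$ and $m=14$.
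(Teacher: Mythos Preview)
Your proposal is correct and follows the paper's proof essentially line for line: the paper also specializes \eqref{thrm} to $\alpha=\beta=\alpha_4$, $k=2$, compares the coefficients of $\zeta_1^{j+1}\zeta_2^{j-1}$, $\zeta_1^{j+2}\zeta_2^{j-2}$, $\zeta_1^{j+3}\zeta_2^{j-3}$, absorbs $Z_{2j}(\beta_2)v$ into $T(j,j;v)$ via Proposition~\ref{pree}(c), and inverts the resulting $3\times3$ matrix in $Z_{j+2}Z_{j-2}$, $Z_jZ_j$, $Z_{2j}(\beta_3)$. The only thing the paper adds is the explicit evaluation of the determinant as $h_1(1,j+1,j-1)(-2\omega^5+\omega^4-\omega^3+2\omega^2+2)$, and no case distinction modulo $14$ is needed since the factor $h_1(1,j+1,j-1)=\tfrac{1}{14}\varepsilon(\nu\alpha_4,\alpha_4)\omega^{-j}(\omega^{-1}-\omega)$ never vanishes.
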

		\begin{proof}
			By assigning $\alpha=\beta=\alpha_{4}$ and $k=2$ in (\ref{thrm}), 
			and comparing the coefficients of
			$\zeta_{1}^{j+1}\zeta_{2}^{j-1}$, 
			$\zeta_{1}^{j+2}\zeta_{2}^{j-2}$ and $\zeta_{1}^{j+3}\zeta_{2}^{j-3}$ respectively, 
			we get
			\begin{align}
				&(c(\alpha_{4},\alpha_{4},1)Z_{j}Z_{j}
				-
				h_{1}(3,-j+3,-j-3)
				Z_{2j}(\beta_{2})\notag\\*
				&
				-
				h_{1}(1,j+1,j-1)
				Z_{2j}(\beta_{3}))v\in{T(j,j;v)},\label{7_3}\\
				&(c(\alpha_{4},\alpha_{4},0)Z_{j+2}Z_{j-2}
				+c(\alpha_{4},\alpha_{4},2)Z_{j}Z_{j}\notag\\*
				&
				-
				h_{1}(3,-j+6,-j-6)
				Z_{2j}(\beta_{2})
				-
				h_{1}(1,j+2,j-2)
				Z_{2j}(\beta_{3}))v\in{T(j,j;v)},\label{7_4}\\
				&(c(\alpha_{4},\alpha_{4},1)Z_{j+2}Z_{j-2}
				+c(\alpha_{4},\alpha_{4},3)Z_{j}Z_{j}\notag\\*
				&
				-
				h_{1}(3,-j+9,-j-9)
				Z_{2j}(\beta_{2})
				-
				h_{1}(1,j+3,j-3)
				Z_{2j}(\beta_{3}))v\in{T(j,j;v)}\label{7_5}.
			\end{align}
			We also have 
			\begin{align*}
				Z_{2j}(\beta_{2})v=Z_{(j+1)+(j-1)}(\beta_{2})v\in{T((j+1)-1,(j-1)+1;v)}=T(j,j;v)
			\end{align*}
			from Proposition \ref{pree}, 
			thus we have to calculate the determinant of the matrix
			consisting of the coefficients of
			$Z_{j+2}Z_{j-2}$, 
			$Z_{j}Z_{j}$ and $Z_{2j}(\beta_{3})$
			in (\ref{7_3}), (\ref{7_4}), and (\ref{7_5}).
			\begin{align*}
				&
				\begin{vmatrix}
					0 & c(\alpha_{4},\alpha_{4},0) & c(\alpha_{4},\alpha_{4},1)\\
					c(\alpha_{4},\alpha_{4},1) & c(\alpha_{4},\alpha_{4},2) & c(\alpha_{4},\alpha_{4},3)\\
					h_{1}(1,j+1,j-1) & h_{1}(1,j+2,j-2) & h_{1}(1,j+3,j-3)
				\end{vmatrix}\\
				&
				=h_{1}(1,j+1,j-1)
				(-2\omega^{5}+\omega^{4}-\omega^{3}+2\omega^{2}+2)\neq0.
			\end{align*}
			Thus, 
			we obtain ${Z_{j}Z_{j}}v\in{T(j,j;v)}$ for $j\in\mathbb{Z}\backslash2\mathbb{Z}$.
		\end{proof}
		
		\begin{prop}
			We have $Z_{i}Z_{i}v\in{{T}(i,i;v)}$ for $i\in2\mathbb{Z}$. 
		\end{prop}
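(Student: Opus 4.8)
The plan is to adapt the proof of the corresponding statement for type $A^{(2)}_{5}$ (the last proposition of \S\ref{5th}), the only genuinely new feature being that the Heisenberg commutator of two $Z$-operators attached to $\alpha_{4}$ now produces \emph{two} auxiliary operators, $Z_{\mathrm{even}}(\beta_{2})$ and $Z_{\mathrm{even}}(\beta_{3})$, rather than just $Z_{\mathrm{even}}(\beta_{2})$. Accordingly the argument splits into two stages: first, for a fixed odd integer $j$, I would show that $Z_{2j+2}(\beta_{2})v$ and $Z_{2j+2}(\beta_{3})v$ both lie in $T(j+1,j+1;v)$; then, specialising this to $j=i-1$, I would feed it into the relation obtained from $\alpha=\beta=\alpha_{1}$.

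For the first stage, apply the generalized commutation relation (\ref{thrm}) with $\alpha=\beta=\alpha_{4}$, $k=2$, and compare the coefficients of $\zeta_{1}^{j+2}\zeta_{2}^{j}$, $\zeta_{1}^{j+3}\zeta_{2}^{j-1}$ and $\zeta_{1}^{j+4}\zeta_{2}^{j-2}$. Since $Z_{\mathrm{even}}(\alpha_{4})=0$ on $L(\Lambda_{0})\otimes L(\Lambda_{1})$ by (\ref{tensor}), every product of opposite parity on the left side vanishes, and the surviving products other than $Z_{j+2}Z_{j}v$ are strictly $\le_{T}$-above $(j+1,j+1)$, hence already lie in $T(j+1,j+1;v)$; on the right side one reads off, from $C_{-1}(\alpha_{4},\alpha_{4})$ and the $\nu$-orbit table, a constant plus a linear combination of $Z_{2j+2}(\beta_{2})v$ and $Z_{2j+2}(\beta_{3})v$. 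The three comparisons thus become, modulo $T(j+1,j+1;v)$, a $3\times3$ linear system in the unknowns $Z_{j+2}Z_{j}v$, $Z_{2j+2}(\beta_{2})v$, $Z_{2j+2}(\beta_{3})v$, entirely parallel to the system (\ref{7_3})--(\ref{7_5}) used for the odd-odd case; it then suffices to check that the determinant of its coefficient matrix --- an expression in $c(\alpha_{4},\alpha_{4},0)$, $c(\alpha_{4},\alpha_{4},1)$, $c(\alpha_{4},\alpha_{4},2)$ and the structure constants $h_{1}$, of the same shape as the determinants verified in \S\ref{7th} --- does not vanish, which forces all three into $T(j+1,j+1;v)$.

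For the second stage, apply (\ref{thrm}) with $\alpha=\beta=\alpha_{1}$, $k=2$, and extract the coefficient of $\zeta_{1}^{i+1}\zeta_{2}^{i-1}$ for even $i$. Here $i\pm1$ are odd, so $Z_{i\pm1}(\alpha_{1})=Z_{\mathrm{odd}}(\alpha_{1})=0$ by (\ref{tensor}); exactly as in \S\ref{5th} this annihilates the $(i+1,i-1)$- and $(i-1,i+1)$-type products and leaves $c(\alpha_{1},\alpha_{1},1)Z_{i}Z_{i}v$ modulo $T(i,i;v)$, while the right side is a constant plus a linear combination of $Z_{2i}(\beta_{2})v$ and $Z_{2i}(\beta_{3})v$ (identified via $C_{-1}(\alpha_{1},\alpha_{1})=\{2,m-2\}$ and the $\nu$-orbit table). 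Taking $j=i-1$ in the first stage gives $Z_{2i}(\beta_{2})v,Z_{2i}(\beta_{3})v\in T((i-1)+1,(i-1)+1;v)=T(i,i;v)$, and since $c(\alpha_{1},\alpha_{1},1)\neq0$ (as noted in \S\ref{5th}) we conclude $Z_{i}Z_{i}v\in T(i,i;v)$. I expect the main obstacle to be purely computational: pinning down which $\nu$-orbit representatives ($\beta_{2}$ versus $\beta_{3}$, and checking no $\alpha_{1}$- or $\alpha_{4}$-orbit term survives) occur in each coefficient comparison, and verifying the nonvanishing of the single $3\times3$ determinant over $\mathbb{Q}(\omega)$ with $\omega$ a primitive $14$th root of unity; everything else is a formal manipulation of (\ref{thrm}). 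Together with the earlier propositions of \S\ref{concrete579} and the reduction of \S\ref{MTpf}, this shows that the set in Theorem \ref{Thm7th} spans $\Omega(L(\Lambda^{(2i-1)}))$, and comparison of its generating function with the product (\ref{Angelica}) via the Rogers--Ramanujan identities then yields that it is a basis, the extra constraint $i_{l}\le-2$ for $L(\Lambda_{3})$ being forced by $\chi(L(\Lambda_{3}))$.
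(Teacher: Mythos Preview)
Your overall strategy matches the paper's, but the first stage has a real gap. When you extract the coefficient of $\zeta_{1}^{j+4}\zeta_{2}^{j-2}$ with $\alpha=\beta=\alpha_{4}$, the term $c(\alpha_{4},\alpha_{4},0)Z_{j+4}Z_{j-2}v$ appears on the left. This product is \emph{not} $\le_{T}$-above $(j+1,j+1)$: the rightmost partial sum gives $j-2<j+1$, so $Z_{j+4}Z_{j-2}v\notin T(j+1,j+1;v)$ a priori. Hence your three comparisons do not yield a $3\times3$ system in the three unknowns you list; there is a fourth unknown $Z_{j+4}Z_{j-2}v$, and the system is underdetermined. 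The paper fixes this exactly as one would expect: it compares one more coefficient, $\zeta_{1}^{j+5}\zeta_{2}^{j-3}$, obtaining four relations in the four unknowns $Z_{j+4}Z_{j-2}v$, $Z_{j+2}Z_{j}v$, $Z_{2j+2}(\beta_{2})v$, $Z_{2j+2}(\beta_{3})v$, and checks that the resulting $4\times4$ determinant is nonzero.

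A minor inaccuracy in your second stage: for $\alpha=\beta=\alpha_{1}$ one has $C_{-1}(\alpha_{1},\alpha_{1})=\{2,m-2\}$ and both contributions lie in the $\nu$-orbit of $\beta_{2}$, so the right side contains only $Z_{2i}(\beta_{2})v$, not $Z_{2i}(\beta_{3})v$. This actually simplifies matters (you only need $Z_{2i}(\beta_{2})v\in T(i,i;v)$ from the first stage), but it means the shape of the calculation is not quite what you describe. Once the first stage is repaired with the $4\times4$ system, the rest of your argument goes through verbatim.
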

		
		\begin{proof}
			First,
			we show $Z_{2j+2}(\beta_{2})v\in{T(j+1,j+1;v)}$
			for $j\in\mathbb{Z}\backslash2\mathbb{Z}$.
			By assigning $\alpha=\beta=\alpha_{4}$ and $k=2$ in (\ref{thrm}), 
			and comparing the coefficients of $\zeta_{1}^{j+2}\zeta_{2}^{j}$, 
			$\zeta_{1}^{j+3}\zeta_{2}^{j-1}$, 
			$\zeta_{1}^{j+4}\zeta_{2}^{j-2}$ and $\zeta_{1}^{j+5}\zeta_{2}^{j-3}$ respectively, 
			we get
			\begin{align}
				&(c(\alpha_{4},\alpha_{4},0)Z_{j+2}Z_{j}
				-
				h_{1}(3,-j+2,-j-4)
				Z_{2j+2}(\beta_{2})\notag\\*
				&-
				h_{1}(1,j+2,j)
				Z_{2j+2}(\beta_{3}))v
				\in{T(j+1,j+1;v)},\label{7_6}\\
				&(c(\alpha_{4},\alpha_{4},1)Z_{j+2}Z_{j}
				-
				h_{1}(3,-j+5,-j-7)
				Z_{2j+2}(\beta_{2})\notag\\*
				&-
				h_{1}(1,j+3,j-1)
				Z_{2j+2}(\beta_{3}))v
				\in{T(j+1,j+1;v)},\label{7_7}\\
				&(c(\alpha_{4},\alpha_{4},0)Z_{j+4}Z_{j-2}
				+c(\alpha_{4},\alpha_{4},2)Z_{j+2}Z_{j}
				-
				h_{1}(3,-j+8,-j-10)
				Z_{2j+2}(\beta_{2})\notag\\*
				&-
				h_{1}(1,j+4,j-2)
				Z_{2j+2}(\beta_{3}))v
				\in{T(j+1,j+1;v)},\label{7_8}\\
				&(c(\alpha_{4},\alpha_{4},1)Z_{j+4}Z_{j-2}
				+c(\alpha_{4},\alpha_{4},3)Z_{j+2}Z_{j}
				-
				h_{1}(3,-j+11,-j-13)
				Z_{2j+2}(\beta_{2})\notag\\*
				&
				-
				h_{1}(1,j+5,j-3)
				Z_{2j+2}(\beta_{3}))v
				\in{T(j+1,j+1;v)}.\label{7_9}
			\end{align}
			The determinant consisting of the coefficients of
			$Z_{j+4}Z_{j-2}$, 
			$Z_{j+2}Z_{j}$, 
			$Z_{2j+2}(\beta_{2})$, $Z_{2j+2}(\beta_{3})$
			in (\ref{7_6}),  (\ref{7_7}),  (\ref{7_8}), and  (\ref{7_9}) is,
			\begin{align*}
				&
				\begin{vmatrix}
					0 & 0 & c(\alpha_{4},\alpha_{4},0) & c(\alpha_{4},\alpha_{4},1)\\
					c(\alpha_{4},\alpha_{4},0) & c(\alpha_{4},\alpha_{4},1)
					& c(\alpha_{4},\alpha_{4},2) & c(\alpha_{4},\alpha_{4},3)\\
					h_{1}(3,-j+2,-j-4)
					& h_{1}(3,-j+5,-j-7)
					& h_{1}(3,-j+8,-j-10)
					& h_{1}(3,-j+11,-j-13)
					\\
					h_{1}(1,j+2,j)
					& h_{1}(1,j+3,j-1)
					& h_{1}(1,j+4,j-2)
					& h_{1}(1,j+5,j-3)
				\end{vmatrix}\\
				&=h_{1}(1,j+2,j)h_{1}(3,-j+2,-j-4)(-4\omega^5 + 4\omega^2 + 6)\neq0.
			\end{align*}
			Thus, for $j\in\mathbb{Z}\backslash2\mathbb{Z}$, we obtain
			\begin{align}\label{rabbit}
				Z_{2j+2}(\beta_{2})v\in{T(j+1,j+1;v)},
			\end{align}
			and next, 
			we show $Z_{i}Z_{i}v\in{{T}(i,i;v)}$ for $i\in2\mathbb{Z}$.
			By assigning $\alpha=\beta=\alpha_{1}$ and $k=2$ in (\ref{thrm}), 
			and comparing the coefficients of $\zeta_{1}^{i+1}\zeta_{2}^{i-1}$, 
			we get
			\begin{align*}
				(c(\alpha_{4},\alpha_{4},1)Z_{i}Z_{i}
				-
				h_{2}(2,2i+2,2i-2)
				Z_{2i}(\beta_{2}))v\in{T(i,i;v)}.
			\end{align*}
			We have $c(\alpha_{4},\alpha_{4},1)\neq0$, and from (\ref{rabbit}), we get
			\begin{align*}
				Z_{2i}(\beta_{2})v=Z_{2(i-1+1)}(\beta_{2})v\in{T((i-1)+1,(i-1)+1;v)}=T(i,i;v). 
			\end{align*}
			Thus, we obtain $Z_{i}Z_{i}v\in{T(i,i;v)}$ for $i\in2\mathbb{Z}$.
		\end{proof}

			The condition ${i_{l}}\le{-2}$ in the case of $L(\Lambda_{3})$
			follows from the principal character $\chi(L(\Lambda_{3}))$.
			By the RR identities,  we can see that the given sets are bases.

	\subsection{The case of type $A^{(2)}_{9}$}\label{9th}
	\noindent
		In this section, we prove Theorem \ref{A29thm}.
		The remaining conditions are as follows.
	\begin{enumerate}
		\item
			$Z_{j}Z_{j'}v\in{T(j,j';v)}$ for $j,j'\in{\mathbb{Z}\backslash2\mathbb{Z}}$
			such that $j\ge{j'}$.
		\item
			$Z_{i}Z_{i}Z_{i}v\in{T(i,i,i;v)}$ for $i\in2\mathbb{Z}$.
		\item
			$Z_{i}Z_{i}Z_{i+2}v\in{T(i,i,i+2;v)}$ and
			$Z_{i-2}Z_{i}Z_{i}v\in{T(i-2,i,i;v)}$
			for $i\in2\mathbb{Z}$.
		\item
			$Z_{i}Z_{i}Z_{i+3}v\in{T(i,i,i+3;v)}$ and
			$Z_{i-3}Z_{i}Z_{i}v\in{T(i-3,i,i;v)}$
			for $i\in2\mathbb{Z}$.
		\item
			$(i_{l-1},i_{l})\neq(-2,-2)$ in the case of $L(\Lambda_{0}+\Lambda_{1})$.
		\item
			$i_{l}\le-2$ (resp. $i_{l}\le-4$)
			in the case of $L(\Lambda_{3})$ (resp. $L(\Lambda_{5})$).
	\end{enumerate}

	\subsubsection{Adjacent $Z$-operators with odd indices}\label{conti_even}
		In this section, we verify the condition of adjacent odd parts.
		We can verify this condition as in the cases of $A^{(2)}_{5}$ and $A^{(2)}_{7}$ types.
		\begin{prop}\label{9odd}
			For $j,j'\in{\mathbb{Z}\backslash2\mathbb{Z}}$ such that $j\ge{j'}$,
			we have
			\begin{enumerate}
				\item$Z_{j}Z_{j'}v\in{T(j,j';v)}$,
				\item$Z_{j+j'}(\beta_{n})v\in{T(j,j';v)}$.
			\end{enumerate}
		\end{prop}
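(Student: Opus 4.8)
The plan is to imitate the proofs given above for $A^{(2)}_{5}$ and $A^{(2)}_{7}$, now with more coefficient comparisons. I would apply the generalized commutation relations (\ref{thrm}) with $\alpha=\beta=\alpha_{5}$ and $k=2$, and compare the coefficients of $\zeta_{1}^{j+t}\zeta_{2}^{j'-t}$ for $t=0,1,2,\dots$. From the $\nu$-orbit table, for $q\in C_{-1}(\alpha_{5},\alpha_{5})$ the root $\nu^{q}\alpha_{5}+\alpha_{5}$ runs exactly over the $\nu$-orbits of $\beta_{1},\beta_{2},\beta_{3},\beta_{4}$; since $j+j'$ is even and $Z_{\mathrm{even}}(\alpha_{5})=0$ on $L(\Lambda_{0})\otimes L(\Lambda_{1})$ by (\ref{tensor}), the orbit of $\alpha_{5}$ contributes nothing, and, using $Z(\nu^{p}\gamma,\zeta)=Z(\gamma,\omega^{p}\zeta)$, every right-hand side term is a scalar multiple of some $Z_{j+j'}(\beta_{n})$ with $1\le n\le 4$. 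A convenient feature of this choice of $\alpha,\beta$ is that all even-even $Z$-products vanish as well, so only odd-odd products $Z_{j+2k}Z_{j'-2k}v$ occur; checking Definition \ref{OrderT}, such a product (and the ``mirror'' products $Z_{j'-p}Z_{j+p}v$) already lies in $\tilde T(j,j';v)$ except for $Z_{j}Z_{j'}v$ itself and the $Z_{j+2k}Z_{j'-2k}v$ with $k\ge 1$, so in the $t$-th comparison the only genuine unknowns are $Z_{j}Z_{j'}v$, the finitely many products $Z_{j+2k}Z_{j'-2k}v$ with $1\le k\le\lfloor t/2\rfloor$, and $Z_{j+j'}(\beta_{1})v,\dots,Z_{j+j'}(\beta_{4})v$.

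To close the argument it suffices to establish $(2)$, i.e.\ $Z_{j+j'}(\beta_{n})v\in T(j,j';v)$ for $1\le n\le 4$: granting this, the $t=1$ comparison (whose coefficient of $Z_{j}Z_{j'}$ is the nonzero constant $c(\alpha_{5},\alpha_{5},1)$) immediately yields $(1)$. The case $n=1$ is trivial, since $Z_{j+j'}(\beta_{1})v=Z_{j+j'}(\alpha_{1})v$ is a monomial of length $1<2$. When $j>j'$, the case $n=2$ follows from Proposition \ref{pree}(c) applied to the even integers $j+1>j'-1$: $Z_{j+j'}(\beta_{2})v=Z_{(j+1)+(j'-1)}(\beta_{2})v\in S(0;j,j';v)\subseteq T(j,j';v)$; and for $n=3,4$ I would take the comparisons $t=0,1,2,3$, which (after absorbing the $\beta_{1},\beta_{2}$ contributions) produce four congruences modulo $T(j,j';v)$ in the four unknowns $Z_{j}Z_{j'}v$, $Z_{j+2}Z_{j'-2}v$, $Z_{j+j'}(\beta_{3})v$, $Z_{j+j'}(\beta_{4})v$, and then verify that the associated $4\times 4$ matrix of structure constants is invertible.

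When $j=j'$ the second-coordinate constraint in Definition \ref{OrderT} turns $Z_{j+2}Z_{j-2}v$, $Z_{j+4}Z_{j-4}v$, \dots\ into genuine unknowns, so the system must be enlarged --- exactly as for the diagonal proposition in the $A^{(2)}_{7}$ case. I would compare $\zeta_{1}^{j+t}\zeta_{2}^{j-t}$ for $t=1,\dots,6$, treating $Z_{j}Z_{j}v$, $Z_{j+2}Z_{j-2}v$, $Z_{j+4}Z_{j-4}v$, $Z_{2j}(\beta_{2})v$, $Z_{2j}(\beta_{3})v$, $Z_{2j}(\beta_{4})v$ as unknowns, and show the resulting square coefficient matrix is invertible; this yields $(2)$, hence $(1)$, in the diagonal case. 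As in the $A^{(2)}_{5}$ and $A^{(2)}_{7}$ arguments, auxiliary relations from (\ref{thrm}) with $\alpha=\alpha_{1}$ and $\beta$ an even generator can be used to peel off some of the $Z_{2j}(\beta_{n})v$ in advance, shrinking the main system.

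The main obstacle is the organization of this larger family of commutation relations --- keeping track, for each coefficient comparison, of precisely which $Z$-products and which $Z_{a}(\beta_{n})$ have already been shown to lie in $T(j,j';v)$ --- together with verifying that the various determinants built from the constants $c(\alpha,\beta,p)$ and from $\varepsilon$ do not vanish. As in the preceding cases, this last step is a finite and entirely explicit computation: each determinant is a Laurent polynomial in a primitive $18$th root of unity $\omega$, and its non-vanishing can be checked directly.
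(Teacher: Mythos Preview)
Your proposal is correct and follows essentially the same route as the paper: the paper also takes $\alpha=\beta=\alpha_{5}$, $k=2$ in (\ref{thrm}), compares the coefficients of $\zeta_{1}^{j+t}\zeta_{2}^{j'-t}$ for $t=0,1,2,3$, uses Proposition \ref{pree}(c) to dispose of $Z_{j+j'}(\beta_{2})v$, and then checks that the resulting $4\times4$ coefficient matrix in the unknowns $Z_{j}Z_{j'}v$, $Z_{j+2}Z_{j'-2}v$, $Z_{j+j'}(\beta_{3})v$, $Z_{j+j'}(\beta_{4})v$ is nonsingular. Two small points to tighten: (i) the determinant depends on $\omega^{j-j'}$, and the paper reduces (exactly as in the proof of Proposition \ref{pree}) to the two cases $j-j'\equiv 2,4\ (\mathrm{mod}\ 18)$ before computing; (ii) in your diagonal sketch, the comparison at $t=6$ introduces the extra unknown $Z_{j+6}Z_{j-6}v$, so you should indeed peel off $Z_{2j}(\beta_{2})v$ first via Proposition \ref{pree}(c) and work with $t=1,\dots,5$ and five unknowns (the paper simply says this case is analogous and omits the details).
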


		\begin{proof}
			By assigning $\alpha=\beta=\alpha_{5}$ and $k=2$ in (\ref{thrm}),
			and comparing the coefficients of $\zeta_{1}^{j}\zeta_{2}^{j'}$, 
			$\zeta_{1}^{j+1}\zeta_{2}^{j'-1}$, 
			$\zeta_{1}^{j+2}\zeta_{2}^{j'-2}$
			and
			$\zeta_{1}^{j+3}\zeta_{2}^{j'-3}$ respectively,
			we get
			\begin{align}
				&(c(\alpha_{5},\alpha_{5},0)Z_{j}Z_{j'}
				-\frac{1}{18}\varepsilon(\nu\alpha_{5},\alpha_{5})
				(\omega^{-j}
				-\omega^{-j'})
				Z_{j+j'}(\beta_{4})\notag\\*
				&-\frac{1}{18}\varepsilon(\nu^{3}\alpha_{5},\alpha_{5})
				(\omega^{-j+2j'}
				-\omega^{2j-j'})Z_{j+j'}(\beta_{3})\notag\\*
				&-\frac{1}{18}\varepsilon(\nu^{5}\alpha_{5},\alpha_{5})
				(\omega^{-j+4j'}
				-\omega^{4j-j'})Z_{j+j'}(\beta_{2}))v
				\in{T(j,j';v)},\label{7_10}\\
				&(c(\alpha_{5},\alpha_{5},1)Z_{j}Z_{j'}
				-\frac{1}{18}\varepsilon(\nu\alpha_{5},\alpha_{5})
				(\omega^{-j-1}-\omega^{-j'+1})
				Z_{j+j'}(\beta_{4})\notag\\*
				&-\frac{1}{18}\varepsilon(\nu^{3}\alpha_{5},\alpha_{5})
				(\omega^{-j+2j'-3}
				-\omega^{2j-j'+3})Z_{j+j'}(\beta_{3})\notag\\*
				&-\frac{1}{18}\varepsilon(\nu^{5}\alpha_{5},\alpha_{5})
				(\omega^{-j+4j'-5}-\omega^{4j-j'+5})Z_{j+j'}(\beta_{2}))v
				\in{T(j,j';v)},\label{7_11}\\
				&(c(\alpha_{5},\alpha_{5},2)Z_{j+2}Z_{j'-2}
				+c(\alpha_{5},\alpha_{5},0)Z_{j}Z_{j'}\notag\\*
				&-\frac{1}{18}\varepsilon(\nu\alpha_{5},\alpha_{5})
				(\omega^{-j-2}
				-\omega^{-j'+2})
				Z_{j+j'}(\beta_{4})\notag\\*
				&-\frac{1}{18}\varepsilon(\nu^{3}\alpha_{5},\alpha_{5})
				(\omega^{-j+2j'-6}
				-\omega^{2j-j'+6})Z_{j+j'}(\beta_{3})\notag\\*
				&-\frac{1}{18}\varepsilon(\nu^{5}\alpha_{5},\alpha_{5})
				(\omega^{-j+4j'-10}
				-\omega^{4j-j'+10})Z_{j+j'}(\beta_{2}))v
				\in{T(j,j';v)},\label{7_12}\\
				&(c(\alpha_{5},\alpha_{5},3)Z_{j+2}Z_{j'-2}
				+c(\alpha_{5},\alpha_{5},1)Z_{j}Z_{j'}\notag\\*
				&-\frac{1}{18}\varepsilon(\nu\alpha_{5},\alpha_{5})
				(\omega^{-j-3}
				-\omega^{-j'+3})
				Z_{j+j'}(\beta_{4})\notag\\*
				&-\frac{1}{18}\varepsilon(\nu^{3}\alpha_{5},\alpha_{5})
				(\omega^{-j+2j'-9}
				-\omega^{2j-j'+9})Z_{j+j'}(\beta_{3})\notag\\*
				&-\frac{1}{18}\varepsilon(\nu^{5}\alpha_{5},\alpha_{5})
				(\omega^{-j+4j'-15}
				-\omega^{4j-j'+15})Z_{j+j'}(\beta_{2}))v
				\in{T(j,j';v)}.\label{7_13}
			\end{align}
			From Proposition \ref{pree}, we have
			\begin{align*}
				Z_{j+j'}(\beta_{2})v=Z_{(j+1)+(j'-1)}(\beta_{2})v\in{T((j+1)-1,(j'-1)+1;v)}=T(j,j';v).
			\end{align*}
			Thus, we have to show that the following determinant of the matrix, 
			consisting of the coefficients of $Z_{j+2}Z_{j'-2}$, 
			$Z_{j}Z_{j'}$, 
			$Z_{j+j'}(\beta_{3})$ and $Z_{i+i'}(\beta_{4})$
			in (\ref{7_10}), (\ref{7_11}), (\ref{7_12}), and (\ref{7_13}), is not 0.
			\begin{align*}
				&\begin{vmatrix}
					0 & 0 & c(\alpha_{5},\alpha_{5},0) & c(\alpha_{5},\alpha_{5},1)\\
					c(\alpha_{5},\alpha_{5},0) & c(\alpha_{5},\alpha_{5},1)
					& c(\alpha_{5},\alpha_{5},2) & c(\alpha_{5},\alpha_{5},3)\\
					\omega^{-j}-\omega^{-j'} & \omega^{-j-1}-\omega^{-j'+1}
					& \omega^{-j-2}-\omega^{-j'+2} & \omega^{-j-3}-\omega^{-j'+3}\\
					\omega^{-j+2j'}-\omega^{2j-j'} & \omega^{-j+2j'-3}-\omega^{2j-j'+3} 
					& \omega^{-j+2j'-6}-\omega^{2j-j'+6} & \omega^{-j+2j'-9}-\omega^{2j-j'+9}
				\end{vmatrix}
				\\*
				&=\omega^{-2j+2j'}
				\begin{vmatrix}
					0 & 0 & 1 & -1\\ 
					1 & -1 & \frac{1}{2} & -\frac{1}{2}\\ 
					1-\omega^{j-j'} & \omega^{-1}-\omega^{j-j'+1}
					& \omega^{-2}-\omega^{j-j'+2} & \omega^{-3}-\omega^{j-j'+3}\\ 
					1-\omega^{3j-3j'} & \omega^{-3}-\omega^{3j-3j'+3} 
					& \omega^{-6}-\omega^{3j-3j'+6} & \omega^{-9}-\omega^{3j-3j'+9} 
				\end{vmatrix}.
			\end{align*}
			From the same reason stated in the proof of Proposition \ref{pree}, 
			we only have to calculate in the cases of $j-j'=2,4$.
			If $j-j'=2$, this determinant equals to
			$\omega^{-4}(3\omega^5 + 6\omega^4 + 6\omega^3 + 3\omega^2 - 3)(\neq0)$,
			and if $j-j'=4$, this determinant equals to
			$\omega^{-6}(6\omega^5 + 6\omega^4 - 6\omega^2 - 6\omega - 6)(\neq0)$.
	
	We can prove $Z_{j}Z_{j}v\in T(j,j;v)$ for $j\in\mathbb{Z}\backslash2\mathbb{Z}$ similarly,
	and we omit it here. 
		\end{proof}

	\subsubsection{Three contiguous $Z$-operators with even indices}\label{3contie}
		In this section and next, 
		we verify the condition of three contiguous parts, 
		and in this section,  we focus on the cases of even indices only.


		\begin{dfn}
			For $i,j\in\mathbb{Z}$, define $T''(i,j;v)\subseteq{T(i,j;v)}$ by
			\begin{align*}
				T''(i,j;v)
				=\SPAN\{Z_{i-2k}Z_{j+2k}v,\ Z_{l}v,\ v\mid k,l\in\mathbb{Z},\ k\ge1\}.
			\end{align*}
		\end{dfn}


		\begin{lem} \label{lem1}
			For $i\in2\mathbb{Z}$, we have
			\begin{align}
				&(d(i)Z_{i}Z_{i}
				-
				Z_{2i}(\beta_{2}))v
				\in{T(i,i;v)},\label{lem1T}\\
				&(d(i)Z_{i}Z_{i}
				-
				Z_{2i}(\beta_{2}))v
				\in{T''(i,i;v)},\label{lem1T''}
			\end{align}
			where $d(i)$ is defined by
			\begin{align*}
				d(i)
				=(2\omega^5 - 2\omega^3 - 2\omega^2 + 2\omega + 1)
				(18\omega^{2i}/\varepsilon(\nu^{2}\alpha_{1},\alpha_{1})).
			\end{align*}
		\end{lem}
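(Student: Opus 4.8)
The plan is to specialize the generalized commutation relation (\ref{thrm}) to $\alpha=\beta=\alpha_1$, $k=2$, acting on $v\in\Omega(L(\Lambda_0)\otimes L(\Lambda_1))$, and to extract a single coefficient. Since for $s=5$ one has $\langle\nu^t\alpha_1,\alpha_1\rangle=-2$ only for $t=2s-1=9$ and $\langle\nu^t\alpha_1,\alpha_1\rangle=-1$ only for $t\in C_{-1}(\alpha_1,\alpha_1)=\{2,m-2\}=\{2,16\}$, this is exactly the set-up of the proof of Proposition \ref{pree}, the only difference being that here we keep the two indices equal. Concretely I would compare the coefficients of $\zeta_1^{i+1}\zeta_2^{i-1}$ in (\ref{thrm}) (equivalently, put $a=i+1$, $b=i-1$ in the displayed coefficient form of the commutation relation). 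On the right-hand side the $q=2$ and $q=16$ contributions collapse, exactly as in the proof of Proposition \ref{pree}, into $g_1(i+1,i-1)Z_{2i}(\beta_2)v$ with $g_1(n,n')=\tfrac1m\varepsilon(\nu^2\alpha_1,\alpha_1)(\omega^{-2n}-\omega^{-2n'})$, plus a scalar multiple of $v$ (the $(D\delta)$-term only ever contributes such a scalar).

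For the left-hand side I would use $Z_n(\alpha_1)=0$ for $n$ odd, which is the case $j=1$ of (\ref{tensor}). In the (finite, when applied to $v$) sum $\sum_{p\ge0}c(\alpha_1,\alpha_1,p)\bigl(Z_{i+1-p}(\alpha_1)Z_{i-1+p}(\alpha_1)-Z_{i-1-p}(\alpha_1)Z_{i+1+p}(\alpha_1)\bigr)v$, every term with $p$ even vanishes, since $i\in2\mathbb{Z}$ makes the first factor of each product an odd-index $Z(\alpha_1)$. Writing $p=2r-1$ with $r\ge1$, the two products become $Z_{i-2(r-1)}Z_{i+2(r-1)}v$ and $Z_{i-2r}Z_{i+2r}v$, both of the symmetric shape $Z_{i-2l}Z_{i+2l}v$. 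For $l\ge1$ these lie in $T''(i,i;v)$ by definition, and a fortiori in $T(i,i;v)$ because $(i,i)<_T(i-2l,i+2l)$. Hence the only term of the left-hand side not of this "error" type is the $r=1$ first-product term $c(\alpha_1,\alpha_1,1)Z_iZ_iv$.

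Combining the two sides and absorbing the scalar $v$-term and all the symmetric products $Z_{i-2l}Z_{i+2l}v$ into $T''(i,i;v)$ yields $c(\alpha_1,\alpha_1,1)Z_iZ_iv-g_1(i+1,i-1)Z_{2i}(\beta_2)v\in T''(i,i;v)$. Since $g_1(i+1,i-1)=\tfrac1m\varepsilon(\nu^2\alpha_1,\alpha_1)\omega^{-2i}(\omega^{-2}-\omega^2)\ne0$ ($\omega$ being a primitive $18$-th root of unity, $\omega^4\ne1$), I would divide through by it; this gives (\ref{lem1T''}) with $d(i)=c(\alpha_1,\alpha_1,1)/g_1(i+1,i-1)$, once one checks that this quotient equals the stated closed form, i.e. the $18$-th-root-of-unity identity $c(\alpha_1,\alpha_1,1)=(\omega^{-2}-\omega^2)(2\omega^5-2\omega^3-2\omega^2+2\omega+1)$, which is a routine computation from the explicit factorization of $F(\alpha_1,\alpha_1,\zeta)$. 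Statement (\ref{lem1T}) then follows since $T''(i,i;v)\subseteq T(i,i;v)$. The only mildly delicate point is the bookkeeping that the two $C_{-1}$-contributions really collapse to a single multiple of $Z_{2i}(\beta_2)v$ with the asserted coefficient, but this is precisely the computation already carried out in the proof of Proposition \ref{pree}, here specialized to $i'=i$; note that, unlike there, no determinant argument is needed, because the vanishing of $Z_{\mathrm{odd}}(\alpha_1)$ forces all error terms into the symmetric form $Z_{i-2l}Z_{i+2l}v$, which is automatically below $(i,i)$ in the $\le_T$-order.
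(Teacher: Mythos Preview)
Your proof is correct and follows essentially the same approach as the paper: both specialize the generalized commutation relation to $\alpha=\beta=\alpha_1$, $k=2$, extract the coefficient of $\zeta_1^{i+1}\zeta_2^{i-1}$, and divide through by the (nonzero) coefficient of $Z_{2i}(\beta_2)$. You supply more explicit justification for why the surviving error terms land in $T''(i,i;v)$, but the argument is otherwise identical to the paper's.
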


		\begin{proof}
			It is enough to show (\ref{lem1T''}) as $T''(i,i;v)\subseteq{T(i,i;v)}$.
			Take $i\in2\mathbb{Z}$. 
			By assigning $\alpha=\beta=\alpha_{1}$ and $k=2$ in (\ref{thrm}), 
			and comparing the coefficients of $\zeta_{1}^{i+1}\zeta_{2}^{i-1}$, 
			we have
			\begin{align*}
				&(c(\alpha_{1},\alpha_{1},1)Z_{i}Z_{i}
				-\frac{1}{18}\varepsilon(\nu^{2}\alpha_{1},\alpha_{1})
				(\omega^{-2i-2}-\omega^{-2i+2})Z_{2i}(\beta_{2}))v
				\in T''(i,i;v).
			\end{align*}
			We have the consequence by dividing this formula
			by the coefficient of $Z_{2i}(\beta_{2})$.
		\end{proof}

		\begin{lem} \label{lem2}
			For $i\in2\mathbb{Z}$, we have
			\begin{align}
				&(e(i)Z_{i}Z_{i+2}-Z_{2i+2}(\beta_{2}))v
				\in{T(i,i+2;v)},\label{lem2T}\\
				&(e(i)Z_{i}Z_{i+2}-Z_{2i+2}(\beta_{2}))v
				\in{T''(i,i+2;v)},\label{lem2T''}
			\end{align}
			where $e(i)$ is defined by
			\begin{align*}
				e(i)
				=(4\omega^5 - \omega^4 - 4\omega^3 - 2\omega^2 + 4\omega + 1)
				(18\omega^{2i}/\varepsilon(\nu^{2}\alpha_{1},\alpha_{1})).
			\end{align*}
		\end{lem}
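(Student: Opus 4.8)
The plan is to rerun the argument of Lemma \ref{lem1}, the only change being that the pair of indices is now $(i,i+2)$ rather than $(i,i)$: this is exactly the situation of Proposition \ref{25+2}, but carried out modulo the smaller space $T''$ instead of $T$. Since $T''(i,i+2;v)\subseteq T(i,i+2;v)$, it suffices to prove (\ref{lem2T''}). Fix $i\in 2\mathbb{Z}$ and apply the generalized commutation relation (\ref{thrm}) with $\alpha=\beta=\alpha_1$ and $k=2$; as in \S\ref{3contie} one has $C_{-1}(\alpha_1,\alpha_1)=\{2,m-2\}$ with $m=18$, and $\nu^{2}\alpha_1+\alpha_1=\beta_2$, so that, after the $\nu$-equivariance $Z(\nu^{p}\gamma,\zeta)=Z(\gamma,\omega^{p}\zeta)$, the $\delta$-part of (\ref{thrm}) contributes only $Z(\beta_2,\cdot)$ together with the scalar $D\delta$-term.

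Because the expansion for the pair $(i,i+2)$ inevitably produces the descending even--even monomial $Z_{i+2}Z_iv$, a single coefficient comparison no longer suffices; I compare the coefficients of the two monomials $\zeta_1^{i+2}\zeta_2^{i}$ and $\zeta_1^{i+3}\zeta_2^{i-1}$, exactly as in the proof of Proposition \ref{25+2}. In each of the two resulting identities the staircase pairs $Z_{i-2k}Z_{i+2+2k}v$ $(k\ge1)$ and the scalar $D\delta$-contribution lie in $T''(i,i+2;v)$ by the definition of $T''$, while the remaining surviving monomials are of length $\le 1$, or are products $Z_jZ_{j'}v$ with $j,j'$ odd, or are $Z_{\mathrm{even}}(\beta_n)v$ with $n\ge 2$; these are pushed into $T''(i,i+2;v)$ using Propositions \ref{beta_n} and \ref{9odd} (at the $T''$-level, obtained just as in Lemma \ref{lem1}). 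What is left is, for $r=0,1$,
\[
\Bigl(c(\alpha_1,\alpha_1,r)\,Z_{i+2}Z_i+b_r\,Z_iZ_{i+2}-\tfrac{1}{m}\varepsilon(\nu^{2}\alpha_1,\alpha_1)\bigl(\omega^{-2(i+2+r)}-\omega^{-2(i-r)}\bigr)Z_{2i+2}(\beta_2)\Bigr)v\in T''(i,i+2;v),
\]
with $b_0=c(\alpha_1,\alpha_1,2)-c(\alpha_1,\alpha_1,0)$ and $b_1=c(\alpha_1,\alpha_1,3)$.

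Finally, eliminate $Z_{i+2}Z_iv$ between the $r=0$ and $r=1$ relations -- the same $2\times2$ determinant manipulation as in Propositions \ref{pree} and \ref{25+2} -- to obtain $\bigl(A\,Z_iZ_{i+2}-B\,Z_{2i+2}(\beta_2)\bigr)v\in T''(i,i+2;v)$ with $A$ and $B$ explicit in $\omega$, $\omega^{2i}$ and $\varepsilon(\nu^{2}\alpha_1,\alpha_1)$. Reducing everything modulo the relation $\omega^{6}-\omega^{3}+1=0$ of the primitive $18$-th root $\omega$ (so that all coefficients lie in the $\mathbb{Q}$-span of $1,\omega,\dots,\omega^{5}$), one checks $B\neq 0$, divides through by $B$ to get (\ref{lem2T''}), and identifies $A/B$ with the stated $e(i)$; the reduction (\ref{lem2T}) then follows from (\ref{lem2T''}).

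The step I expect to be the main obstacle is the middle paragraph: verifying that every monomial other than $Z_{i+2}Z_iv$, $Z_iZ_{i+2}v$ and $Z_{2i+2}(\beta_2)v$ really does land in the small space $T''(i,i+2;v)$, and not merely in $T(i,i+2;v)$. This forces the reductions of Propositions \ref{beta_n} and \ref{9odd} to be iterated until they terminate at staircase even--even pairs and monomials of length $\le 1$; in particular the odd--odd monomials such as $Z_{i+1}Z_{i+1}v$ that appear (with nonzero coefficient $c(\alpha_1,\alpha_1,1)^2/2$ after the elimination) must first be rewritten via the $T''$-strengthening of Proposition \ref{9odd} obtained as in Lemma \ref{lem1}. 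Once this bookkeeping is in place, the elimination of $Z_{i+2}Z_iv$ and the final identification of $e(i)$ are routine computations in $\mathbb{Q}[\omega]/(\omega^{6}-\omega^{3}+1)$.
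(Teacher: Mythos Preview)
Your overall plan --- compare coefficients of $\zeta_1^{i+2}\zeta_2^{i}$ and $\zeta_1^{i+3}\zeta_2^{i-1}$ in the relation (\ref{thrm}) with $\alpha=\beta=\alpha_1$, then eliminate $Z_{i+2}Z_iv$ --- is exactly the paper's argument, and the reduction from (\ref{lem2T}) to (\ref{lem2T''}) is also the same.

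Where you go astray is in the ``main obstacle'' you anticipate. There is no obstacle: since $\alpha=\beta=\alpha_1$, every operator on the left of (\ref{thrm}) is $Z_{\bullet}(\alpha_1)$, and by (\ref{tensor}) we have $Z_{2k+1}(\alpha_1)=0$ on $L(\Lambda_0)\otimes L(\Lambda_1)$. Hence odd--odd monomials such as $Z_{i+1}Z_{i+1}v$ never occur at all (your claimed coefficient $c(\alpha_1,\alpha_1,1)^2/2$ is a phantom), and on the right $C_{-1}(\alpha_1,\alpha_1)=\{2,m-2\}$ produces only $Z_{2i+2}(\beta_2)$, not any $Z(\beta_n)$ with $n\ge3$. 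Propositions~\ref{beta_n} and~\ref{9odd} are therefore irrelevant here. Concretely, for $(a,b)=(i+2,i)$ only even $p$ contribute, and for $(a,b)=(i+3,i-1)$ only odd $p$ contribute; in both cases every term with $p$ beyond the first two is already a staircase pair $Z_{i-2k}(\alpha_1)Z_{i+2+2k}(\alpha_1)v$ with $k\ge1$, hence lies in $T''(i,i+2;v)$ by definition, while the length~$\le1$ and constant pieces do as well. What remains is precisely your displayed pair of relations, and the elimination and identification of $e(i)$ are then the straightforward computation you describe.
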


		\begin{proof}
			It is enough to show (\ref{lem2T''}).
			Take $i\in2\mathbb{Z}$. 
			By assigning $\alpha=\beta=\alpha_{1}$ and $k=2$ in (\ref{thrm}), 
			and comparing the coefficients of $\zeta_{1}^{i+2}\zeta_{2}^{i}$ and
			$\zeta_{1}^{i+3}\zeta_{2}^{i-1}$ respectively, 
			we get
			\begin{align*}
				&(c(\alpha_{1},\alpha_{1},0)Z_{i+2}Z_{i}
				+(c(\alpha_{1},\alpha_{1},2)-c(\alpha_{1},\alpha_{1},0))
				Z_{i}Z_{i+2}\\*
				&-\frac{1}{18}\varepsilon(\nu^{2}\alpha_{1},\alpha_{1})
				(\omega^{-2i-4}-\omega^{-2i})Z_{2i+2}(\beta_{2}))v
				\in{T''(i,i+2;v)},\\
				&(c(\alpha_{1},\alpha_{1},1)Z_{i+2}Z_{i}
				+c(\alpha_{1},\alpha_{1},3)
				Z_{i}Z_{i+2}\\*
				&-\frac{1}{18}\varepsilon(\nu^{2}\alpha_{1},\alpha_{1})
				(\omega^{-2i-6}-\omega^{-2i+2})Z_{2i+2}(\beta_{2}))v
				\in{T''(i,i+2;v)}.
			\end{align*}
			We obtain the consequence by vanishing $Z_{i+2}Z_{i}v$
			from these formulae.
		\end{proof}
		
		\begin{rmk}
			The formulae (\ref{lem1T''}) and (\ref{lem2T''})
			are utilized only in the proof of Proposition \ref{Butler}.
		\end{rmk}

		\begin{prop}\label{Austin}
			For $i\in2\mathbb{Z}$, we have
				$Z_{i}Z_{i}Z_{i}v
				\in{T(i,i,i;v)}$.
		\end{prop}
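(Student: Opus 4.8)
The strategy is to reduce $Z_iZ_iZ_iv=Z_i(\alpha_1)Z_i(\alpha_1)Z_i(\alpha_1)v$, modulo $T(i,i,i;v)$, to a scalar multiple of itself with a scalar one checks is nonzero --- exactly as in the determinant arguments of the earlier propositions, but now run on top of the $\beta_2$-reductions of Lemmas \ref{lem1} and \ref{lem2}. First I would use that the $Z$-operators preserve the vacuum space, so $Z_i(\alpha_1)v\in\Omega(L)$, and apply Lemma \ref{lem1} both to $v$ and to $Z_i(\alpha_1)v$. An elementary check on the $T$-order gives $Z_i(\alpha_1)\,T(i,i;v)\subseteq T(i,i,i;v)$ and $T(i,i;Z_i(\alpha_1)v)\subseteq T(i,i,i;v)$: a length-two monomial $Z_{j_1}Z_{j_2}$ over $Z_i(\alpha_1)v$ with $(j_1,j_2)>_T(i,i)$ becomes a length-three monomial $Z_{j_1}Z_{j_2}Z_iv$ with $(j_1,j_2,i)>_T(i,i,i)$, because then $j_2\ge i$; and similarly on the other side. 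Hence both $Z_i(\alpha_1)Z_{2i}(\beta_2)v\equiv d(i)Z_iZ_iZ_iv$ and $Z_{2i}(\beta_2)Z_i(\alpha_1)v\equiv d(i)Z_iZ_iZ_iv$ modulo $T(i,i,i;v)$, so, since $d(i)\neq0$, it suffices to place $Z_i(\alpha_1)Z_{2i}(\beta_2)v$ into $T(i,i,i;v)$.

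Comparing coefficients of (\ref{thrm}) at the degree-matched bidegree $\zeta_1^i\zeta_2^{2i}$ with $\alpha=\alpha_1$, $\beta=\beta_2$, $k=2$ only exchanges $Z_i(\alpha_1)Z_{2i}(\beta_2)v$ and $Z_{2i}(\beta_2)Z_i(\alpha_1)v$, giving nothing new; so instead I would compare at the shifted bidegree $\zeta_1^{i+1}\zeta_2^{2i-1}$. Since $i+1$ is odd and $Z_{\textrm{odd}}(\alpha_1)=0$ on $L(\Lambda_0)\otimes L(\Lambda_1)$ the leading commutator drops out, $C_{-2}(\alpha_1,\beta_2)=\emptyset$ kills the $D\delta$ term, and the relation takes the shape
\[
c(\alpha_1,\beta_2,1)\,Z_i(\alpha_1)Z_{2i}(\beta_2)v=\sum_{l\ge1}\bigl((\cdots)Z_{i-2l}(\alpha_1)Z_{2i+2l}(\beta_2)v+(\cdots)Z_{2i-2l}(\beta_2)Z_{i+2l}(\alpha_1)v\bigr)+\frac1m\sum_{q\in C_{-1}(\alpha_1,\beta_2)}(\cdots)Z_{3i}(\nu^q\alpha_1+\beta_2)v .
\]
In each term of the first sum the $\beta_2$-index has strictly grown (or the $\alpha_1$-index has been pushed into the $Z(\beta_2)$-factor), so re-expanding that factor by Lemmas \ref{lem1}, \ref{lem2} and Proposition \ref{pree} produces length-$\le2$ $Z$-monomials, length-three $Z$-monomials whose rightmost index exceeds $i$, and a few ``bad'' triples such as $(i+2,i-2,i)$; the latter are themselves reduced, via Proposition \ref{pree}(b) applied to the leading even--even pair, to a multiple of $Z_iZ_iZ_iv$ plus terms of $T(i,i,i;v)$. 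For the right-hand side, each root $\nu^q\alpha_1+\beta_2$ lies (for $A^{(2)}_9$) in the $\nu$-orbit of $\alpha_1$ or of $\beta_3$, so the contribution is a multiple of $Z_{3i}v\in T(i,i,i;v)$ plus a multiple of $Z_{3i}(\beta_3)v$, and $Z_{3i}(\beta_3)v$ is itself brought into $T(i,i,i;v)$ via Proposition \ref{beta_n}: its relation (\ref{beta_n+}) expresses $Z_{3i}(\beta_3)v$ through $Z(\alpha_1)$--$Z(\beta_2)$ products and $Z_{3i}(\alpha_1)v$, all of which reduce exactly as above. Collecting everything, the identity becomes a relation $c\,Z_iZ_iZ_iv\in T(i,i,i;v)$ with $c$ an explicit scalar built from $c(\alpha_1,\beta_2,1)$, $d(i)$, and the constants coming out of Propositions \ref{pree} and \ref{beta_n}; one checks that $c$ is a nonzero polynomial in $\omega$, and then the first paragraph finishes the proof.

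The hard part is exactly the combinatorial bookkeeping of the previous paragraph: every shifted product $Z_{i-2l}(\alpha_1)Z_{2i+2l}(\beta_2)v$, $Z_{2i-2l}(\beta_2)Z_{i+2l}(\alpha_1)v$ and every $Z_{3i}(\nu^q\alpha_1+\beta_2)v$ must be tracked through the parity-dependent reductions of Lemmas \ref{lem1}, \ref{lem2} and Proposition \ref{pree}, using $Z_{\textrm{even}}(\alpha_s)=0$ and the explicit $\nu$-orbit of $\alpha_1$, and the residual ``bad'' triples must be fed back into $Z_iZ_iZ_iv$; the last step is then the short but fiddly verification that the resulting scalar does not vanish. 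If it happened that $c(\alpha_1,\beta_2,1)=0$, one would restart with a different odd shift $\zeta_1^{i+c}\zeta_2^{2i-c}$.
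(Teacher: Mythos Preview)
Your approach is essentially the paper's: apply the generalized commutation relation with $\alpha=\alpha_1$, $\beta=\beta_2$ at the odd-shifted bidegree $\zeta_1^{i+1}\zeta_2^{2i-1}$, then convert the surviving term $c(\alpha_1,\beta_2,1)Z_i(\alpha_1)Z_{2i}(\beta_2)v$ to $c(\alpha_1,\beta_2,1)d(i)\,Z_iZ_iZ_iv$ via Lemma~\ref{lem1} and check the scalar is nonzero.

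Where you diverge from the paper is in the bookkeeping, and here you make it harder than necessary. In the paper's analysis there are \emph{no} ``bad'' triples feeding back into $Z_iZ_iZ_iv$: every shifted product $Z_{i+1-p}(\alpha_1)Z_{2i-1+p}(\beta_2)v$ (for $p\ge 3$ odd) and $Z_{2i-1-p}(\beta_2)Z_{i+1+p}(\alpha_1)v$ (for $p\ge 1$ odd) lands directly in $T(i,i,i;v)$. The trick is to choose, according to the residue of $p-1$ mod $4$, between Lemma~\ref{lem1} (which turns $Z_{2(i+2n)}(\beta_2)$ into $Z_{i+2n}Z_{i+2n}$) and Proposition~\ref{pree}(c) (which places $Z_{2(i+2n+1)}(\beta_2)v'$ into $T(i+2n+1,i+2n+1;v')$). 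With this parity-split, every resulting length-three tuple already dominates $(i,i,i)$ in the $T$-order, so the final scalar is simply $c(\alpha_1,\beta_2,1)d(i)$, not a complicated aggregate. Your anticipated triples like $(i+2,i-2,i)$ never appear. Likewise, $Z_{3i}(\beta_3)v$ is disposed of in one line via Proposition~\ref{9odd}(2), which already shows it lies in a length-$\le 2$ space and hence in $T(i,i,i;v)$; there is no need to unwind Proposition~\ref{beta_n} and feed the $Z(\alpha_1)$--$Z(\beta_2)$ products back through the same reduction.
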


		\begin{proof}
%
%
			We consider the commutation relation
			between $Z_{i}(\alpha_{1})$ and $Z_{2i}(\beta_{2})$
			for showing $Z_{i}(\alpha_{1})Z_{2i}(\beta_{2})v\in{T(i,i,i;v)}$.
			By assigning $\alpha=\alpha_{1}$, $\beta=\beta_{2}$ and $k=2$ in (\ref{thrm}), 	
			comparing the coefficients of $\zeta_{1}^{i+1}\zeta_{2}^{2i-1}$, we get
		        \begin{align}
				&\sum_{p\ge1,\ p\in\mathbb{Z}\backslash2\mathbb{Z}}
				(c(\alpha_{1},\beta_{2},p)
				Z_{i+1-p}(\alpha_{1})
				Z_{2i-1+p}(\beta_{2})\notag\\*
				&-
				c(\beta_{2},\alpha_{1},p)
				Z_{2i-1-p}(\beta_{2})
				Z_{i+1+p}(\alpha_{1})
				)\notag\\*
				&=d_{1}(i+1,2i-1)Z_{3i}(\alpha_{1})
				+d_{2}(i+1,2i-1)Z_{3i}(\beta_{3}),\label{coef1'}
			\end{align}
			where $d_{1}(a,b)$ and $d_{2}(a,b)$ are the constants
			determined by comparing the coefficients of $\zeta_{1}^{a}\zeta_{2}^{b}$.
			From proposition \ref{9odd}, 
			$Z_{3i}(\beta_{3})v$ is spanned by the $Z$-monomials
			with length shorter than 3,  thus we have
			\begin{align}\label{3i}
				Z_{3i}(\beta_{3})v\in{T(i,i,i;v)}.
			\end{align}

			From (\ref{coef1'}) and (\ref{3i}), we have
			\begin{align}\label{i,i,i,1}
				&\sum_{p\ge1,\ p\in\mathbb{Z}\backslash2\mathbb{Z}}
				(c(\alpha_{1},\beta_{2},p)
				Z_{i+1-p}(\alpha_{1})Z_{2i-1+p}(\beta_{2})\notag\\*
				&-
				c(\beta_{2},\alpha_{1},p)
				Z_{2i-1-p}(\beta_{2})Z_{i+1+p}(\alpha_{1})
				)v
				\in{T(i,i,i;v)}.
			\end{align}

			Now,  let $p\in\mathbb{Z}\backslash2\mathbb{Z}$,  and
			we prove that $Z_{i+1-p}(\alpha_{1})Z_{2i-1+p}(\beta_{2})v$
			for $p\ge3$
			and
			$Z_{2i-1-p}(\beta_{2})Z_{i+1+p}(\alpha_{1})v$ for $p\ge1$
			belong to $T(i,i,i;v)$,
			classifying the cases by whether $p-1$ is a multiple of 4 or not.
			Then,  from (\ref{i,i,i,1}) we obtain $Z_{i}(\alpha_{1})Z_{2i}(\beta_{2})v\in{T(i,i,i;v)}$,
			and rewrite $Z_{i}(\alpha_{1})Z_{2i}(\beta_{2})v$
			via the element
			$Z_{i}(\beta_{1})Z_{i}(\alpha_{1})Z_{i}(\alpha_{1})v$.


			Set $p\ge1,\ p\in\mathbb{Z}\backslash2\mathbb{Z}$ and $p-1\in4\mathbb{Z}$. 
			Then $p-1$ can be denoted by $4n\ (n\ge0)$,
			and we have
			$
				Z_{i+1-p}(\alpha_{1})
				Z_{2i-1+p}(\beta_{2})
				=
				Z_{i-4n}(\alpha_{1})
				Z_{2i+4n}(\beta_{2}).
			$
			From (\ref{lem1T}), we obtain
			\begin{align*}
				&(Z_{i-4n}(\alpha_{1})
				Z_{2i+4n}(\beta_{2})\\*
				&-
				d(i+2n)
				Z_{i-4n}
				Z_{i+2n}
				Z_{i+2n})v
				\in{T(i-4n,i+2n,i+2n;v)}.
			\end{align*}
			Thus, if $n=0$,  we have
			\begin{align} \label{i,2i}
				(Z_{i}(\alpha_{1})
				Z_{2i}(\beta_{2})
				-
				d(i)Z_{i}
				Z_{i}Z_{i})v
				\in{T(i,i,i;v)},
			\end{align}
			and if $n\ge1$, 
			we have
			\begin{align} \label{i-4n,2i+4n}
				Z_{i-4n}(\alpha_{1})
				Z_{2i+4n}(\beta_{2})v
				\in{T(i,i,i;v)}.
			\end{align}
			We also have $p+1=4n+2$,  and 
			$
				Z_{2i-1-p}(\beta_{2})
				Z_{i+1+p}(\alpha_{1})
				=
				Z_{2i-4n-2}(\beta_{2})
				Z_{i+4n+2}(\alpha_{1}).
			$
			From Proposition \ref{pree}, we have
			\begin{align} \label{2i-4n-2,i+4n+2}
				Z_{2i-4n-2}(\beta_{2})
				Z_{i+4n+2}(\alpha_{1})v
				\in{T(i-2n-1,i-2n-1,i+4n+2;v)}\subseteq{T(i,i,i;v)}.
			\end{align}

    
			Likewise, let $p\in\mathbb{Z}\backslash2\mathbb{Z}$ such that $p\ge1$
			and $p-1\notin4\mathbb{Z}$. 
			Then $p-1$ can be denoted by $4n+2\ (n\ge0)$, and we have
			$
				Z_{i+1-p}(\alpha_{1})
				Z_{2i-1+p}(\beta_{2})
				=
				Z_{i-4n-2}(\alpha_{1})
				Z_{2i+4n+2}(\beta_{2}).
			$ 
			From Proposition \ref{pree},  we have
			\begin{align} \label{i-4n-2,2i+4n+2}
					Z_{i-4n-2}(\alpha_{1})
					Z_{2i+4n+2}(\beta_{2})v
					\in{T(i-4n-2,i+2n+1,i+2n+1;v)}
					\subseteq{T(i,i,i;v)}.
			\end{align}
			We also have $p+1=4n+4$ and
			$
				Z_{2i-1-p}(\beta_{2})
				Z_{i+1+p}(\alpha_{1})
				=
				Z_{2i-4n-4}(\beta_{2})
				Z_{i+4n+4}(\alpha_{1}).
			$ 
			From
			(\ref{lem1T}), we have
			\begin{align}
				&(Z_{2i-4n-4}(\beta_{2})
				Z_{i+4n+4}(\alpha_{1})\notag\\*
				&-
				d(i-2n-2)
				Z_{i-2n-2}
				Z_{i-2n-2}
				Z_{i+4n+4})v
				\in{T(i-2n-2,i-2n-2,i+4n+4;v)}
				\subseteq{T(i,i,i;v)}.\label{star}
			\end{align}
			From (\ref{star}) and the fact that
			$Z_{i-2n-2}
			Z_{i-2n-2}
			Z_{i+4n+4}v\in{T(i,i,i;v)}$,  we have
			\begin{align} \label{2i-4n-4,i+4n+4}
				Z_{2i-4n-4}(\beta_{2})
				Z_{i+4n+4}(\alpha_{1})v
				\in{T(i,i,i;v)}.
			\end{align}
    
			Thus, from (\ref{i-4n,2i+4n}) and (\ref{i-4n-2,2i+4n+2}), 
			for $p\in\mathbb{Z}\backslash2\mathbb{Z}$ such that $p\ge3$, we get
			\begin{align}\label{garden}
				Z_{i+1-p}(\alpha_{1})Z_{2i-1+p}(\beta_{2})v\in{T(i,i,i;v)},
			\end{align}
			and from (\ref{2i-4n-2,i+4n+2}) and (\ref{2i-4n-4,i+4n+4}), 
			for $p\in\mathbb{Z}\backslash2\mathbb{Z}$ such that $p\ge1$, we get 
			\begin{align}\label{garden'}
				Z_{2i-1-p}(\beta_{2})Z_{i+1+p}(\alpha_{1})v\in{T(i,i,i;v)}.
			\end{align}
			Therefore, from (\ref{i,i,i,1}), (\ref{garden}), and (\ref{garden'}),  we have
			\begin{align}\label{garden''}
				c(\alpha_{1},\beta_{2},1)Z_{i}(\alpha_{1})Z_{2i}(\beta_{2})v\in{T(i,i,i;v)}. 
			\end{align}
			From (\ref{garden''}) and (\ref{i,2i}),
			we get
			\begin{align*}
				c(\alpha_{1},\beta_{2},1)d(i)
				Z_{i}Z_{i}Z_{i}v\in{T(i,i,i;v)}.
			\end{align*}
			As $c(\alpha_{1},\beta_{2},1)d(i)
			\neq0$, 
			we obtain $Z_{i}Z_{i}Z_{i}v\in{T(i,i,i;v)}$.
		\end{proof}

		\begin{prop}
			For $i\in2\mathbb{Z}$, we have
			\begin{align*}
				Z_{i}Z_{i}Z_{i+2}v
				\in{T(i,i,i+2;v)},\qquad
				Z_{i-2}Z_{i}Z_{i}v
				\in{T(i-2,i,i;v)}.
			\end{align*}
		\end{prop}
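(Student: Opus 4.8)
The plan is to imitate the proof of Proposition~\ref{Austin} step for step, treating $Z_iZ_iZ_{i+2}v\in T(i,i,i+2;v)$ in detail (the claim $Z_{i-2}Z_iZ_iv\in T(i-2,i,i;v)$ is entirely parallel, the differences being that one peels off the \emph{left} pair $Z_iZ_i$ by Lemma~\ref{lem1} rather than the right pair $Z_iZ_{i+2}$ by Lemma~\ref{lem2}, and one reads off the coefficient of $\zeta_1^{i-1}\zeta_2^{2i-1}$ in place of $\zeta_1^{i+1}\zeta_2^{2i+1}$ below). First I would peel: by the $T''$-form of Lemma~\ref{lem2}, $e(i)Z_iZ_iZ_{i+2}v-Z_i(\alpha_1)Z_{2i+2}(\beta_2)v$ lies in $Z_i(\alpha_1)\cdot T''(i,i+2;v)$, and a direct check against $\le_T$ shows $Z_i(\alpha_1)\cdot T''(i,i+2;v)\subseteq T(i,i,i+2;v)$ (each arising length-$3$ monomial $Z_iZ_{i-2k}Z_{i+2+2k}v$, $k\ge1$, has index tuple strictly $\le_T$-above $(i,i,i+2)$, and length-$\le2$ monomials are automatically inside). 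So it suffices to prove $Z_i(\alpha_1)Z_{2i+2}(\beta_2)v\in T(i,i,i+2;v)$.

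To do this I would apply the generalized commutation relation~(\ref{thrm}) with $\alpha=\alpha_1$, $\beta=\beta_2$, $k=2$, and compare coefficients of $\zeta_1^{i+1}\zeta_2^{2i+1}$. As $i+1$ and $2i+1$ are odd, (\ref{tensor}) annihilates every $Z_{\mathrm{odd}}(\alpha_1)$ and $Z_{\mathrm{odd}}(\beta_2)$, so the identity only involves $Z_{\mathrm{even}}(\alpha_1)$'s, $Z_{\mathrm{even}}(\beta_2)$'s, and $Z_{3i+2}(\alpha_1)$, $Z_{3i+2}(\beta_3)$; the $p=1$ terms are $c(\alpha_1,\beta_2,1)Z_i(\alpha_1)Z_{2i+2}(\beta_2)$ in the first sum and $c(\beta_2,\alpha_1,1)Z_{2i}(\beta_2)Z_{i+2}(\alpha_1)$ in the second. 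By Proposition~\ref{9odd}(2), $Z_{3i+2}(\beta_3)v$ is a span of $Z$-monomials of length $<3$, and $Z_{3i+2}(\alpha_1)v$ has length $1$, so the whole right-hand side lies in $T(i,i,i+2;v)$. For the cross terms with $p\ge3$ — each of the shape $Z_{\mathrm{even}}(\alpha_1)Z_{2c}(\beta_2)v$ or $Z_{2c}(\beta_2)Z_{\mathrm{even}}(\alpha_1)v$ with $p=4n+3$ or $4n+5$ — I would use Lemma~\ref{lem1} when $4\mid 2c$ (rewriting $Z_{2c}(\beta_2)$ as $d(c)Z_cZ_c$ modulo $T''(c,c;\cdot)$) and Proposition~\ref{pree}(c) when $2c\equiv2\pmod4$ (expanding $Z_{2c}(\beta_2)$ directly into even-index monomials of length $\le2$); in every case the resulting length-$3$ monomials have index tuples strictly $\le_T$-above $(i,i,i+2)$, and the $T''$-errors and shorter monomials lie in $T(i,i,i+2;v)$ too, leaving the single relation $c(\alpha_1,\beta_2,1)Z_i(\alpha_1)Z_{2i+2}(\beta_2)v-c(\beta_2,\alpha_1,1)Z_{2i}(\beta_2)Z_{i+2}(\alpha_1)v\in T(i,i,i+2;v)$.

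Finally the second-sum remnant reduces too: the operator form of Lemma~\ref{lem1} applied to $Z_{i+2}(\alpha_1)v$ gives $Z_{2i}(\beta_2)Z_{i+2}(\alpha_1)v\equiv d(i)Z_iZ_iZ_{i+2}v\pmod{T(i,i,i+2;v)}$ (again a $\le_T$-check absorbs the $T''$-error), while Lemma~\ref{lem2} gives $Z_i(\alpha_1)Z_{2i+2}(\beta_2)v\equiv e(i)Z_iZ_iZ_{i+2}v$; substituting both into the above relation yields $\bigl(c(\alpha_1,\beta_2,1)e(i)-c(\beta_2,\alpha_1,1)d(i)\bigr)Z_iZ_iZ_{i+2}v\in T(i,i,i+2;v)$, and since this scalar is nonzero — a concrete computation with $\omega$ a primitive $18$th root of unity, of exactly the kind carried out in \S\ref{5th}--\S\ref{7th} — we conclude $Z_iZ_iZ_{i+2}v\in T(i,i,i+2;v)$. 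In the parallel case the second-sum remnant $Z_{2i-2}(\beta_2)Z_i(\alpha_1)v$ instead has $2i-2\equiv2\pmod4$, so it is handled through Proposition~\ref{pree}(c), which in turn produces out-of-order even pairs that must be reordered by Proposition~\ref{pree}(b) before the closing nonvanishing check.

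The main obstacle is not conceptual but the bookkeeping: one must verify, for \emph{all} admissible $p$ simultaneously, that the many cross terms, the $T''$-errors, and (in the $Z_{i-2}Z_iZ_i$ case) the secondary reorderings of out-of-order even pairs really land in the target $T(\cdot\,;v)$ — juggling $\le_T$ against the even--even and odd--odd orderings recalled at the start of \S\ref{concrete579}, against Proposition~\ref{Austin}, and against Lemmas~\ref{lem1}--\ref{lem2} — together with the single remaining $A^{(2)}_9$-specific nonvanishing verification. The $\le_T$-inequalities are routine, but one should check that the borderline index combinations come out strictly, not merely weakly, above $(i,i,i+2)$ (respectively $(i-2,i,i)$).
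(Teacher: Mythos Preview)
Your outline is sound but takes a genuinely different route from the paper. The paper does \emph{not} imitate Proposition~\ref{Austin} by extracting a single coefficient; instead it extracts four coefficients of $\zeta_1^{i+n+2}\zeta_2^{2i-n}$ for $2\le n\le 5$ in the $(\alpha_1,\beta_2)$ relation, discards all second-sum terms into $T(i,i,i+2;v)$ (these all lie strictly above $(i,i,i+2)$ in $\le_T$), and is left with four linear relations among the four products $Z_{i+2k}(\alpha_1)Z_{2i+2-2k}(\beta_2)v$ for $k=0,1,2,3$. The $4\times4$ determinant of the coefficients $c(\alpha_1,\beta_2,\cdot)$ is computed to be nonzero, giving $Z_i(\alpha_1)Z_{2i+2}(\beta_2)v\in T(i,i,i+2;v)$ directly; Lemma~\ref{lem2} then finishes. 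Your approach instead keeps only the $p=1$ terms of a single coefficient extraction and reduces \emph{both} back to $Z_iZ_iZ_{i+2}v$ via Lemmas~\ref{lem1}--\ref{lem2}, trading the paper's $4\times4$ determinant for a single $A^{(2)}_9$-scalar $c(\alpha_1,\beta_2,1)e(i)-c(\beta_2,\alpha_1,1)d(i)$. This is closer in spirit to the proof of Proposition~\ref{Austin} and lighter on bookkeeping, but the nonvanishing must still be checked explicitly; the paper's determinant route avoids introducing $d(i),e(i)$ into the final verification.

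One imprecision in your parallel case: for the second-sum remnant $Z_{2i-2}(\beta_2)Z_i(\alpha_1)v$ you invoke Proposition~\ref{pree}(c), but that proposition only places $Z_{2i-2}(\beta_2)w$ in a \emph{span} $S(0;\cdot;w)$, not in a specific linear combination, so it does not yield a definite coefficient for the closing scalar. The right tool here is Lemma~\ref{lem2} again (since $2i-2=2(i-2)+2$), which gives $Z_{2i-2}(\beta_2)Z_iv\equiv e(i-2)Z_{i-2}Z_iZ_iv$ modulo $T(i-2,i,i;v)$; the final scalar is then $c(\alpha_1,\beta_2,1)d(i)-c(\beta_2,\alpha_1,1)e(i-2)$. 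With that correction your argument goes through.
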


		\begin{proof}
			We prove only $Z_{i}Z_{i}Z_{i+2}v\in{T(i,i,i+2;v)}$
			for $i\in2\mathbb{Z}$.
			By assigning $\alpha=\alpha_{1}$, $\beta=\beta_{2}$ and $k=2$ in (\ref{thrm}), 
			comparing the coefficients of $\zeta_{1}^{i+n+2}\zeta_{2}^{2i-n}$ (for $2\le{n}\le5$), 
			we get
			\begin{align}\label{9_1}
				&\sum_{p\in\mathbb{Z}}
				(c(\alpha_{1},\beta_{2},p)
				Z_{i+n+2-p}(\alpha_{1})
				Z_{2i-n+p}(\beta_{2})
				-
				c(\beta_{2},\alpha_{1},p)
				Z_{2i-n-p}(\beta_{2})
				Z_{i+n+2+p}(\alpha_{1})
				)\notag\\*
				&=d_{1}(i+n+2,2i-n)Z_{3i+2}(\alpha_{1})
				+d_{2}(i+n+2,2i-n)Z_{3i+2}(\beta_{3}).
			\end{align}
			From proposition \ref{9odd} (2), 
			we have $Z_{3i+2}(\beta_{3})v\in{T(i,i,i+2;v)}$. 
			In addition,
			for $p\in2\mathbb{Z}$ such that $p\ge4$, 
			we have $Z_{i+2-p}(\alpha_{1})Z_{2i+p}(\beta_{2})v\in{T(i,i,i+2;v)}$, 
			and for $p\in2\mathbb{Z}$ such that 
			$p\ge2$, we have $Z_{2i-p}(\beta_{2})Z_{i+2+p}(\alpha_{1})v\in{T(i,i,i+2;v)}$
			from the same reason explained in the proof of Proposition \ref{Austin}.
			Thus, from (\ref{9_1})
			we get
			\begin{align}
				&(c(\alpha_{1},\beta_{2},0)
				Z_{i+4}(\alpha_{1})
				Z_{2i-2}(\beta_{2})
				+
				c(\alpha_{1},\beta_{2},2)
				Z_{i+2}(\alpha_{1})
				Z_{2i}(\beta_{2})\notag\\*
				&+
				c(\alpha_{1},\beta_{2},4)
				Z_{i}(\alpha_{1})
				Z_{2i+2}(\beta_{2}))v
				\in{T(i,i,i+2;v)},\label{9_9}\\
				&(c(\alpha_{1},\beta_{2},1)
				Z_{i+4}(\alpha_{1})
				Z_{2i-2}(\beta_{2})
				+
				c(\alpha_{1},\beta_{2},3)
				Z_{i+2}(\alpha_{1})
				Z_{2i}(\beta_{2})\notag\\*
				&
				+c(\alpha_{1},\beta_{2},5)
				Z_{i}(\alpha_{1})
				Z_{2i+2}(\beta_{2}))v
				\in{T(i,i,i+2;v)},\label{9_10}\\
				&(c(\alpha_{1},\beta_{2},0)
				Z_{i+6}(\alpha_{1})
				Z_{2i-4}(\beta_{2})
				+
				c(\alpha_{1},\beta_{2},2)
				Z_{i+4}(\alpha_{1})
				Z_{2i-2}(\beta_{2})\notag\\*
				&
				+
				c(\alpha_{1},\beta_{2},4)
				Z_{i+2}(\alpha_{1})
				Z_{2i}(\beta_{2})\notag\\*
				&
				+c(\alpha_{1},\beta_{2},6)
				Z_{i}(\alpha_{1})
				Z_{2i+2}(\beta_{2}))v
				\in{T(i,i,i+2;v)},\label{9_11}\\
				&(c(\alpha_{1},\beta_{2},1)
				Z_{i+6}(\alpha_{1})
				Z_{2i-4}(\beta_{2})
				+
				c(\alpha_{1},\beta_{2},3)
				Z_{i+4}(\alpha_{1})
				Z_{2i-2}(\beta_{2})\notag\\*
				&
				+
				c(\alpha_{1},\beta_{2},5)
				Z_{i+2}(\alpha_{1})
				Z_{2i}(\beta_{2})\notag\\*
				&
				+c(\alpha_{1},\beta_{2},7)
				Z_{i}(\alpha_{1})
				Z_{2i+2}(\beta_{2}))v
				\in{T(i,i,i+2;v)},\label{9_12}
			\end{align}
			The determinant of the matrix
			consisting of the coefficients of $Z_{i+6}(\alpha_{1})Z_{2i-4}(\beta_{2})$, 
			$Z_{i+4}(\alpha_{1})Z_{2i-2}(\beta_{2})$, 
			$Z_{i+2}(\alpha_{1})Z_{2i}(\beta_{2})$ and
			$Z_{i}(\alpha_{1})Z_{2i+2}(\beta_{2})$
			in (\ref{9_9}), (\ref{9_10}), (\ref{9_11}), and (\ref{9_12}) is,
			\begin{align*}
				&\begin{vmatrix}
					0 & 0
					& c(\alpha_{1},\beta_{2},0) & c(\alpha_{1},\beta_{2},1)\\
					c(\alpha_{1},\beta_{2},0) & c(\alpha_{1},\beta_{2},1)
					& c(\alpha_{1},\beta_{2},2) & c(\alpha_{1},\beta_{2},3)\\
					c(\alpha_{1},\beta_{2},2) & c(\alpha_{1},\beta_{2},3)
					& c(\alpha_{1},\beta_{2},4) & c(\alpha_{1},\beta_{2},5)\\
					c(\alpha_{1},\beta_{2},4) & c(\alpha_{1},\beta_{2},5)
					& c(\alpha_{1},\beta_{2},6) & c(\alpha_{1},\beta_{2},7)
				\end{vmatrix}
				\notag\\*
				&=3\omega^5 - 4\omega^4 + 2\omega^3 - 3\omega^2 + 2\omega + 2
				\neq0.
			\end{align*}
			Thus, we have $Z_{i}(\alpha_{1})Z_{2i+2}(\beta_{2})v\in{T(i,i,i+2;v)}$. 
			From this and (\ref{lem2T}), 
			we get $e(i)Z_{i}Z_{i}Z_{i+2}v\in{T(i,i,i+2;v)}$,
			and obtain $Z_{i}Z_{i}Z_{i+2}v\in{T(i,i,i+2;v)}$.
		\end{proof}


	\subsubsection{Three contiguous $Z$-operators including an odd part}\label{conti_odd}
		In this section, we prove $Z_{i}Z_{i}Z_{i+3}v\in{T(i,i,i+3;v)}$
		and $Z_{i-3}Z_{i}Z_{i}v\in{T(i-3, i, i;v)}$ for $i\in2\mathbb{Z}$.
		\begin{lem} \label{lem3}
			\begin{enumerate}
				\item
					For $(i, j)\in(2\mathbb{Z},\mathbb{Z}\backslash2\mathbb{Z})$
					or $(\mathbb{Z}\backslash2\mathbb{Z},2\mathbb{Z})$
					such that $i-j\ge-1$, we have
					\begin{align*}
						Z_{i}Z_{j}v
						\in
						T\left(\frac{i+j-1}{2},\frac{i+j+1}{2};v\right).
					\end{align*}
				\item
					For $i\in2\mathbb{Z}$, we have
            				\begin{align*}
						(Z_{i+2}Z_{i+1}
						+\left(\frac{1}{2}\omega^3 + \frac{1}{2}\omega^2 - \frac{1}{2}\right)
						Z_{i}Z_{i+3})v
						\in
						T(i,i+3;v).
					\end{align*}
			\end{enumerate}
		\end{lem}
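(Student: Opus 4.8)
The plan is to derive both parts from the generalized commutation relation (\ref{thrm}) applied with $\alpha,\beta\in\{\alpha_{1},\alpha_{5}\}$, $k=2$, by reading off well-chosen coefficients of $\zeta_{1}^{a}\zeta_{2}^{b}$ and sorting the resulting terms by the order $\le_{T}$. First I would record the preliminary facts I will use repeatedly: $c(\alpha_{1},\alpha_{5},0)=c(\alpha_{5},\alpha_{1},0)=1$; on $L(\Lambda_{0})\otimes L(\Lambda_{1})$ one has $Z_{\mathrm{even}}(\alpha_{5})=Z_{\mathrm{odd}}(\alpha_{1})=0$ by (\ref{tensor}), so in each coefficient comparison only the summands with $p$ of one fixed parity survive; $C_{-2}(\alpha_{1},\alpha_{5})=\emptyset$, so the $D\delta$-term of (\ref{thrm}) is absent and $d(\alpha_{1},\alpha_{5},a,b)=0$; and $C_{-1}(\alpha_{1},\alpha_{5})=\{6,17\}$, $C_{-1}(\alpha_{5},\alpha_{1})=\{1,12\}$, where a check against the $\nu$-orbit table of \S\ref{Prince} shows each of $\nu^{6}\alpha_{1}+\alpha_{5}=\alpha_{4}+\alpha_{5}$, $\nu^{17}\alpha_{1}+\alpha_{5}=\nu^{2}\alpha_{5}$, $\nu^{1}\alpha_{5}+\alpha_{1}=\nu^{3}\alpha_{5}$ and $\nu^{12}\alpha_{5}+\alpha_{1}=\alpha_{1}+\dots+\alpha_{5}=\nu^{10}\alpha_{5}$ lies in the $\nu$-orbit of $\alpha_{5}$. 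Consequently every single $Z$-operator produced on the right of (\ref{thrm}) is a scalar multiple of $Z_{a+b}(\alpha_{5})v$, i.e.\ a monomial of length $\le1$, hence an element of $\tilde T(\cdot,\cdot;v)$ for any length-$2$ index pair.

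\textbf{Part (1).} The case $(i,j)\in(\mathbb{Z}\backslash2\mathbb{Z},2\mathbb{Z})$ is handled identically to $(i,j)\in(2\mathbb{Z},\mathbb{Z}\backslash2\mathbb{Z})$ after exchanging the roles of $\alpha_{1}$ and $\alpha_{5}$ in (\ref{thrm}), so I treat the latter and induct on the gap $i-j$ (always odd, $\ge-1$). For $i-j=-1$, Remark \ref{rmk} gives $Z_{i}Z_{j}v=Z_{i}Z_{i+1}v\in S(1;i,i+1;v)\subseteq T(i,i+1;v)=T\bigl(\tfrac{i+j-1}{2},\tfrac{i+j+1}{2};v\bigr)$. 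For $i-j=1$, comparing the $\zeta_{1}^{i}\zeta_{2}^{i-1}$-coefficients in (\ref{thrm}) with $(\alpha,\beta)=(\alpha_{1},\alpha_{5})$ isolates exactly the two terms $Z_{i}(\alpha_{1})Z_{i-1}(\alpha_{5})v$ and $Z_{i-1}(\alpha_{5})Z_{i}(\alpha_{1})v$ (index pairs $(i,i-1)$ and $(i-1,i)$), all other terms having index pair $>_{T}(i-1,i)$ or length $\le1$; thus $Z_{i}Z_{i-1}v-Z_{i-1}Z_{i}v\in T(i-1,i;v)$, and since $Z_{i-1}Z_{i}v\in S(1;i-1,i;v)\subseteq T(i-1,i;v)$ by Remark \ref{rmk}, we get $Z_{i}Z_{i-1}v\in T(i-1,i;v)$. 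For the inductive step $i-j\ge3$, put $c_{0}=\tfrac{i+j-1}{2}$ and compare $\zeta_{1}^{i}\zeta_{2}^{j}$-coefficients: this expresses $Z_{i}(\alpha_{1})Z_{j}(\alpha_{5})v$ through terms $Z_{i-p}(\alpha_{1})Z_{j+p}(\alpha_{5})v$ ($p$ even $\ge2$), $Z_{j-p}(\alpha_{5})Z_{i+p}(\alpha_{1})v$ ($p$ even $\ge0$) and length-$\le1$ terms. Since $i-j\ge3$, every term of the second kind has second index $i+p>c_{0}+1$, hence index pair $>_{T}(c_{0},c_{0}+1)$; so does every first-kind term with $j+p\ge c_{0}+2$, and so do the length-$\le1$ terms. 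Each remaining first-kind term satisfies $j+p\le c_{0}+1$, which forces $-1\le(i-p)-(j+p)<i-j$, so the induction hypothesis gives $Z_{i-p}(\alpha_{1})Z_{j+p}(\alpha_{5})v\in T\bigl(\tfrac{(i-p)+(j+p)-1}{2},\tfrac{(i-p)+(j+p)+1}{2};v\bigr)=T(c_{0},c_{0}+1;v)$, the balanced pair being unchanged since $(i-p)+(j+p)=i+j$. Collecting, $Z_{i}Z_{j}v\in T(c_{0},c_{0}+1;v)$, completing the induction.

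\textbf{Part (2).} Fix $i\in2\mathbb{Z}$ and read off two sub-relations of (\ref{thrm}) for $(\alpha,\beta)=(\alpha_{1},\alpha_{5})$. Comparing $\zeta_{1}^{i+2}\zeta_{2}^{i+1}$-coefficients, the only terms that are neither of length $\le1$ nor have index pair $>_{T}(i,i+3)$ are $Z_{i+2}Z_{i+1}v$, $c(\alpha_{1},\alpha_{5},2)Z_{i}Z_{i+3}v$, $-Z_{i+1}Z_{i+2}v$, whence $Z_{i+2}Z_{i+1}v+c(\alpha_{1},\alpha_{5},2)Z_{i}Z_{i+3}v-Z_{i+1}Z_{i+2}v\in T(i,i+3;v)$. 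Comparing $\zeta_{1}^{i+1}\zeta_{2}^{i+2}$-coefficients, the surviving terms are $c(\alpha_{1},\alpha_{5},1)Z_{i}Z_{i+3}v$ and $-c(\alpha_{5},\alpha_{1},1)Z_{i+1}Z_{i+2}v$, so $c(\alpha_{1},\alpha_{5},1)Z_{i}Z_{i+3}v-c(\alpha_{5},\alpha_{1},1)Z_{i+1}Z_{i+2}v\in T(i,i+3;v)$. It then remains to compute these series coefficients from the definition of $F(\cdot,\cdot,\zeta)$ using the pairings of \S\ref{Prince} and $\omega^{9}=-1$: I expect to obtain $c(\alpha_{1},\alpha_{5},1)=\omega-\omega^{3}$, $c(\alpha_{5},\alpha_{1},1)=\omega^{3}+\omega^{2}-\omega^{5}-1\neq0$, $c(\alpha_{1},\alpha_{5},2)=\tfrac12\omega^{3}+\tfrac12\omega^{2}-\omega^{4}-\tfrac12$, together with the key identity $c(\alpha_{1},\alpha_{5},1)=-\omega^{4}c(\alpha_{5},\alpha_{1},1)$. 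Hence the second relation gives $Z_{i+1}Z_{i+2}v\equiv-\omega^{4}Z_{i}Z_{i+3}v\pmod{T(i,i+3;v)}$, and substituting into the first yields $Z_{i+2}Z_{i+1}v+\bigl(c(\alpha_{1},\alpha_{5},2)+\omega^{4}\bigr)Z_{i}Z_{i+3}v\in T(i,i+3;v)$ with $c(\alpha_{1},\alpha_{5},2)+\omega^{4}=\tfrac12\omega^{3}+\tfrac12\omega^{2}-\tfrac12$, which is the asserted identity.

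\textbf{Main obstacle.} The hard point is the coefficient-matching in part (2): the constant $\tfrac12\omega^{3}+\tfrac12\omega^{2}-\tfrac12$ is not a coefficient of any single series $F$, but emerges only after combining the two sub-relations, so the computation must produce precisely the cancellation $c(\alpha_{1},\alpha_{5},1)+\omega^{4}c(\alpha_{5},\alpha_{1},1)=0$; making this come out right demands care with the normalisations in (\ref{thrm}) and with the relations among powers of $\omega$ (equivalently $\Phi_{18}(\omega)=0$). A secondary, pervasive difficulty is the parity-and-order bookkeeping in every coefficient comparison — deciding exactly which $p$ contribute (because $Z_{\mathrm{even}}(\alpha_{5})=Z_{\mathrm{odd}}(\alpha_{1})=0$) and checking that every term except the two or three named ones lands in $\tilde T$, which ultimately rests on the elementary monotonicity that redistributing an index pair of fixed sum toward its balanced value strictly increases $\le_{T}$.
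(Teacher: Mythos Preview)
Your proposal is correct and follows essentially the same route as the paper. For part (2) the two arguments are identical: compare the $\zeta_{1}^{i+2}\zeta_{2}^{i+1}$- and $\zeta_{1}^{i+1}\zeta_{2}^{i+2}$-coefficients of (\ref{thrm}) with $(\alpha,\beta)=(\alpha_{1},\alpha_{5})$ and eliminate $Z_{i+1}Z_{i+2}v$; the paper simply records the two relations and says ``vanish $Z_{i+1}Z_{i+2}v$'', whereas you carry out the elimination explicitly via $c(\alpha_{1},\alpha_{5},1)=-\omega^{4}c(\alpha_{5},\alpha_{1},1)$. For part (1) the paper packages the argument slightly differently: it introduces $T'(a,b;v)=\SPAN\{Z_{a-l}Z_{b+l}v,\ Z_{l'}v,\ v\mid l\ge1\}$ and shows in one stroke (four sub-cases) that $Z_{i}Z_{j}v\in T'(i,j;v)$, leaving the passage to $T\bigl(\tfrac{i+j-1}{2},\tfrac{i+j+1}{2};v\bigr)$ to iteration; your explicit induction on the gap $i-j$ is exactly this iteration written out.
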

		\begin{proof}
			(1)
			Define $T'(a,b;v)\subseteq{T(a,b;v)}$ for $a,b\in\mathbb{Z}$ by
			\begin{align*}
				T'(a,b;v)=\SPAN\{Z_{a-l}Z_{b+l}v,\ Z_{l'}v,\ v\mid l,l'\in\mathbb{Z},\ l\ge1\}.
			\end{align*}
			It is enough to show $Z_{i}Z_{j}v\in{T'(i,j;v)}$
			for given $i,j$.
%
%
%
%
			Set $i\in2\mathbb{Z}$ and $j\in\mathbb{Z}\backslash2\mathbb{Z}$ such that $i>j$.
			By assigning $\alpha=\alpha_{1}$, $\beta=\alpha_{5}$ and $k=2$ in (\ref{thrm}), 
			and comparing the coefficients of
			$\zeta_{1}^{i}\zeta_{2}^{j}$ and $\zeta_{1}^{i+1}\zeta_{2}^{i}$ respectively,  we get
			\begin{align*}
				&c(\alpha_{1},\alpha_{5},0)Z_{i}Z_{j}v
				=Z_{i}Z_{j}v\\*
				&=(c(\alpha_{5},\alpha_{1},0)Z_{j}Z_{i}\\*
				&-
				\sum_{p\ge2,\ p\in2\mathbb{Z}}
				(c(\alpha_{1},\alpha_{5},p)Z_{i-p}Z_{j+p}
				-c(\alpha_{5},\alpha_{1},p)Z_{j-p}Z_{i+p})\\*
				&+d_{3}(i,j)Z_{i+j})v
				\in
				T'(i,j;v),\\
				&c(\alpha_{1},\alpha_{5},1)Z_{i}Z_{i+1}v
				=(-\omega^{3}+\omega)Z_{i}Z_{i+1}v\\*
				&=(c(\alpha_{5},\alpha_{1},1)Z_{i-1}Z_{i+2}\\*
				&-\sum_{p\ge3,\ p\in\mathbb{Z}\backslash2\mathbb{Z}}
				(c(\alpha_{1},\alpha_{5},p)Z_{i+1-p}Z_{i+p}
				-c(\alpha_{5},\alpha_{1},p)Z_{i-p}Z_{i+1+p})\\*
				&+d_{3}(i+1,i)Z_{2i+1})v\in{T'(i,i+1;v)},
			\end{align*}
			where $d_{3}(a,b)$ is the constant
			determined by comparing the coefficients of $\zeta_{1}^{a}\zeta_{2}^{b}$.


			Set $i\in\mathbb{Z}\backslash2\mathbb{Z}$ and $j\in2\mathbb{Z}$ such that $i>j$. 
			Then we have
			\begin{align*}
				&c(\alpha_{5},\alpha_{1},0)Z_{i}Z_{j}v
				=Z_{i}Z_{j}v\\*
				&=(c(\alpha_{1},\alpha_{5},0)Z_{j}Z_{i}\\*
				&+\sum_{p\ge2,\ p\in2\mathbb{Z}}
				(c(\alpha_{1},\alpha_{5},p)Z_{j-p}Z_{i+p}
				-c(\alpha_{5},\alpha_{1},p)Z_{i-p}Z_{j+p})\\*
				&-d_{3}(j,i)Z_{i+j})v\in{T'(i,j;v)},\\
				&c(\alpha_{5},\alpha_{1},1)Z_{i}Z_{i+1}v
				=(-\omega^{-3}+\omega^{-1})Z_{i}Z_{i+1}v\\*
				&=(c(\alpha_{1},\alpha_{5},1)Z_{i-1}Z_{i+2}\\*
				&
				-\sum_{p\ge3\ p\in\mathbb{Z}\backslash2\mathbb{Z}}
				(c(\alpha_{1},\alpha_{5},p)Z_{i-p}Z_{i+1+p}
				-c(\alpha_{5},\alpha_{1},p)Z_{i+1-p}Z_{i+p})\\*
				&
				-d_{3}(i,i+1)Z_{2i+1})v\in{T'(i,i+1;v)}.
			\end{align*}
			Therefore we obtain the result.

			(2) Set $i\in2\mathbb{Z}$.
%
%
			By assigning $\alpha=\alpha_{1}$, $\beta=\alpha_{5}$ and $k=2$ in (\ref{thrm}), 
			and comparing the coefficients of $\zeta_{1}^{i+2}\zeta_{2}^{i+1}$, we have
			\begin{align}\label{2.eq1}
				&(c(\alpha_{1},\alpha_{5},0)
				Z_{i+2}Z_{i+1}
				-
				c(\alpha_{5},\alpha_{1},0)
				Z_{i+1}Z_{i+2}
				+
				c(\alpha_{1},\alpha_{5},2)
				Z_{i}Z_{i+3})v
				\notag\\*
				&=
				(Z_{i+2}Z_{i+1}
				-
				Z_{i+1}Z_{i+2}
				+
				c(\alpha_{1},\alpha_{5},2)
				Z_{i}Z_{i+3})v
				\in{T(i,i+3;v)}.
			\end{align}
%
%
			Likewise, by assigning $\alpha=\alpha_{1}$, $\beta=\alpha_{5}$ and $k=2$ in (\ref{thrm}), 
			and comparing the coefficients of $\zeta_{1}^{i+1}\zeta_{2}^{i+2}$, we have
			\begin{align}\label{2.eq2}
				&(c(\alpha_{1},\alpha_{5},1)
				Z_{i}Z_{i+3}
				-
				c(\alpha_{5},\alpha_{1},1)
				Z_{i+1}Z_{i+2})v
				\in
				T(i,i+3;v)
			\end{align}
			We get the consequence by vanishing $Z_{i+1}Z_{i+2}v$ from (\ref{2.eq1})
			using (\ref{2.eq2}).
		\end{proof}

		\begin{lem} \label{lem4}
			For $i\in2\mathbb{Z}$,  we have
			\begin{align*}
				((\omega^5 - \omega^2)
				Z_{i}
				Z_{i}
				Z_{i+3}
				-
				Z_{i-1}
				Z_{i+2}
				Z_{i+2})v
				\in{T(i,i,i+3;v)}.
			\end{align*}
		\end{lem}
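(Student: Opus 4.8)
The plan is to follow the same machinery used for Lemma \ref{lem1}, Lemma \ref{lem2} and Proposition \ref{Austin}: derive Lemma \ref{lem4} from the length-$2$ relations already in hand (Lemma \ref{lem1}, Lemma \ref{lem3}) by composing operators and extracting coefficients from the generalized commutation relations (\ref{thrm}) with $k=2$, and then sweep all the resulting ``boundary'' monomials into $T(i,i,i+3;v)$. The scalar $\omega^{5}-\omega^{2}$ is the structure constant that survives this bookkeeping.

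Concretely, I would first normalise both target monomials. Since the $Z$-operators preserve $\Omega(L)$, Lemma \ref{lem1} applied with base vector $Z_{i+3}(\alpha_{5})v$ gives $d(i)\,Z_{i}Z_{i}Z_{i+3}v\equiv Z_{2i}(\beta_{2})Z_{i+3}(\alpha_{5})v\pmod{T(i,i,i+3;v)}$, using that $T(i,i;Z_{i+3}(\alpha_{5})v)\subseteq T(i,i,i+3;v)$ (immediate from the definition of $<_{T}$, since appending a common last part $i+3\ge i+3$ preserves $<_{T}$); and Lemma \ref{lem1} with $i$ replaced by $i+2$, acted on the left by $Z_{i-1}(\alpha_{5})$, gives $d(i+2)\,Z_{i-1}Z_{i+2}Z_{i+2}v\equiv Z_{i-1}(\alpha_{5})Z_{2i+4}(\beta_{2})v$ modulo $Z_{i-1}(\alpha_{5})\cdot T(i+2,i+2;v)$. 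Next I would compare the coefficient of $\zeta_{1}^{i+3}\zeta_{2}^{2i}$ in (\ref{thrm}) with $\alpha=\alpha_{5}$, $\beta=\beta_{2}$: here $C_{-2}(\alpha_{5},\beta_{2})=\emptyset$ kills the $(D\delta)$-term, and the correction terms $Z_{3i+3}(\nu^{q}\alpha_{5}+\beta_{2})$ all have even index and lie in the $\nu$-orbit of $\alpha_{s}$, hence vanish on $L(\Lambda_{0})\otimes L(\Lambda_{1})$ by (\ref{tensor}). What remains is a relation tying together $Z_{2i}(\beta_{2})Z_{i+3}(\alpha_{5})v$, the term $Z_{i+3}(\alpha_{5})Z_{2i}(\beta_{2})v$, the surviving ladder term $Z_{i-1}(\alpha_{5})Z_{2i+4}(\beta_{2})v$, and a ladder of further mixed products $Z_{i+3-p}(\alpha_{5})Z_{2i+p}(\beta_{2})v$, $Z_{2i-p}(\beta_{2})Z_{i+3+p}(\alpha_{5})v$ ($p\ge1$). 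Using Proposition \ref{pree} in its sharp form (which lands $Z_{a}(\alpha_{1})Z_{b}(\alpha_{1})w$ in $T(a-1,b+1;w)$), Proposition \ref{preo}, Proposition \ref{9odd}(2), and Lemma \ref{lem3} --- the last of which feeds in the $\omega$-polynomial $\tfrac12\omega^{3}+\tfrac12\omega^{2}-\tfrac12$ through the exchange $Z_{i}Z_{i+3}\leftrightarrow Z_{i+2}Z_{i+1}$ --- I would absorb every term except the two that, via the normalisations above, correspond to $Z_{i}Z_{i}Z_{i+3}v$ and $Z_{i-1}Z_{i+2}Z_{i+2}v$; collecting the scalars then yields the stated identity.

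The hard part will be the absorption step. Repeatedly, the ladder terms and the corrections $Z_{i-1}(\alpha_{5})\cdot T(i+2,i+2;v)$ and $Z_{i+3}(\alpha_{5})\cdot T(i,i;v)$ produce $Z$-monomials whose last index is only $i+2$ (or $i+1$, or $i$) rather than $\ge i+3$, so they are \emph{not} in $T(i,i,i+3;v)$ for order reasons; these must be re-sorted using the generalized commutation relations and Lemma \ref{lem3}, and a good fraction of them simply drop out because $Z_{\textrm{even}}(\alpha_{s})=Z_{\textrm{odd}}(\alpha_{1})=0$ on $L(\Lambda_{0})\otimes L(\Lambda_{1})$ by (\ref{tensor}). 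Since re-sorting creates new monomials, the argument is closed by downward induction on the $\le_{T}$-order, which terminates because only finitely many $Z$-monomials have total degree $3i+3$. Everything else --- tracking the constants and checking the small non-vanishing facts ($\omega^{5}-\omega^{2}\neq0$, $d(i),d(i+2)\neq0$, and the relevant coefficients $c(\alpha,\beta,\cdot)$) --- is routine.
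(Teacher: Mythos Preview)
Your route through $Z_{2i}(\beta_{2})$, $Z_{2i+4}(\beta_{2})$ and the $\alpha_{5}$--$\beta_{2}$ commutation relation is genuinely different from the paper's, and in fact it is the machinery the paper reserves for the \emph{next} result, Proposition~\ref{Butler}. The paper's proof of Lemma~\ref{lem4} is much shorter and never touches $\beta_{2}$: it uses only the $\alpha_{1}$--$\alpha_{5}$ relation, passing through the intermediate monomial $Z_{i}Z_{i+1}Z_{i+2}$. Extracting the coefficient of $\zeta_{1}^{i+1}\zeta_{2}^{i+2}$ and left-multiplying by $Z_{i}$ gives
\[
A\,Z_{i}Z_{i}Z_{i+3}-B\,Z_{i}Z_{i+1}Z_{i+2}\in T(i,i,i+3;v);
\]
extracting $\zeta_{1}^{i+1}\zeta_{2}^{i}$ and right-multiplying by $Z_{i+2}$, then absorbing the tail $p\ge 3$ via Propositions~\ref{pree} and~\ref{preo}, gives
\[
A\,Z_{i}Z_{i+1}Z_{i+2}-B\,Z_{i-1}Z_{i+2}Z_{i+2}\in T(i,i,i+3;v),
\]
where $A=c(\alpha_{1},\alpha_{5},1)=\omega-\omega^{3}$ and $B=c(\alpha_{5},\alpha_{1},1)=\omega^{-1}-\omega^{-3}$. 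Eliminating $Z_{i}Z_{i+1}Z_{i+2}$ yields the constant $(A/B)^{2}=\omega^{8}=\omega^{5}-\omega^{2}$ (since $\omega^{6}-\omega^{3}+1=0$) with no further bookkeeping.

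Your sketch has two concrete problems. First, the assertion that the correction terms $Z_{3i+3}(\nu^{q}\alpha_{5}+\beta_{2})$ ``have even index'' is false: $3i+3$ is odd for $i\in 2\mathbb{Z}$, and these terms lie in the $\nu$-orbit of $\alpha_{5}$, so they give nonzero length-one contributions (which are nevertheless in $T$, but for the opposite reason). Second, and more seriously, you never compute the constant. Your extraction at $\zeta_{1}^{i+3}\zeta_{2}^{2i}$ runs over even $p$ and leaves, besides $Z_{2i}(\beta_{2})Z_{i+3}$ and $Z_{i-1}Z_{2i+4}(\beta_{2})$, the terms $Z_{i+3}(\alpha_{5})Z_{2i}(\beta_{2})$ and $Z_{i+1}(\alpha_{5})Z_{2i+2}(\beta_{2})$, whose expansions contribute further multiples of $Z_{i}Z_{i}Z_{i+3}$ and $Z_{i-1}Z_{i+2}Z_{i+2}$; after all re-sorting you obtain \emph{some} linear relation between these two monomials, but the entire content of Lemma~\ref{lem4} is that the ratio is $\omega^{5}-\omega^{2}$, and ``collecting the scalars'' does not establish this. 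Indeed, the paper later derives a second, independent relation of the same shape from the $\alpha_{5}$--$\beta_{2}$ commutator and combines it with Lemma~\ref{lem4} to conclude Proposition~\ref{Butler}; so there is no reason to expect your $\alpha_{5}$--$\beta_{2}$ extraction to land on the Lemma~\ref{lem4} constant rather than the other one.
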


		\begin{proof}
			Set $i\in2\mathbb{Z}$.
			By assigning $\alpha=\alpha_{1}$, $\beta=\alpha_{5}$ and $k=2$ in (\ref{thrm}), 
			and comparing the coefficients of $\zeta_{1}^{i+1}\zeta_{2}^{i+2}$, we get
			\begin{align*}
				(c(\alpha_{1},\alpha_{5},1)Z_{i}Z_{i+3}
				-c(\alpha_{5},\alpha_{1},1)Z_{i+1}Z_{i+2})v
				\in{T(i,i+3;v)},
			\end{align*}
			and multiplying the $Z$-polynomial by $Z_{i}$ from left side, we obtain
			\begin{align}\label{inu}
				(c(\alpha_{1},\alpha_{5},1)
				Z_{i}Z_{i}Z_{i+3}
				-c(\alpha_{5},\alpha_{1},1)
				Z_{i}Z_{i+1}Z_{i+2})v\in{T(i,i,i+3;v)}.
			\end{align}
			Likewise, 
			by assigning $\alpha=\alpha_{1}$, $\beta=\alpha_{5}$ and $k=2$ in (\ref{thrm}), 
			comparing the coefficients of $\zeta_{1}^{i+1}\zeta_{2}^{i}$, 
			and multiplying the $Z$-polynomial by $Z_{i+2}$ from right side, we have
			\begin{align}\label{Bronte}
				&\sum_{p\ge1,p\in\mathbb{Z}\backslash2\mathbb{Z}}
				(c(\alpha_{1},\alpha_{5},p)
				Z_{i-p+1}Z_{i+p}Z_{i+2}\notag\\*
				&-c(\alpha_{5},\alpha_{1},p)
				Z_{i-p}Z_{i+1+p}Z_{i+2})v
				\in{T(i,i,i+3;v)}.
			\end{align}
			If we get
			\begin{align}
				Z_{i-p+1}Z_{i+p}Z_{i+2}v
				\in{T(i,i,i+3;v)},\label{Nadja}\\*
				Z_{i-p}Z_{i+1+p}Z_{i+2}v
				\in{T(i,i,i+3;v)}\label{Apple}
			\end{align}
			for $p\in\mathbb{Z}\backslash2\mathbb{Z}$ such that $p\ge3$, 
			from (\ref{Bronte}) we obtain
			\begin{align}\label{innu}
				(c(\alpha_{1},\alpha_{5},1)
				Z_{i}Z_{i+1}Z_{i+2}
				-c(\alpha_{5},\alpha_{1},1)
				Z_{i-1}Z_{i+2}Z_{i+2})v
				\in{T(i,i,i+3;v)},
			\end{align}
			and by uniting (\ref{inu}) and (\ref{innu}), we obtain the consequence.
			
			We show (\ref{Nadja}) and (\ref{Apple}).
			From Proposition \ref{pree}(1),  we have
			\begin{align*}
				Z_{i+1+p}Z_{i+2}v\in{S(0;i+p,i+3;v)},
			\end{align*}
			for $p\in\mathbb{Z}\backslash2\mathbb{Z}$ such that $p\ge3$, and
			as ${S(0;i+p,i+3;v)}\subseteq{T(i+p,i+3;v)}$,
			we get 
			\begin{align*}
				Z_{i-p}Z_{i+1+p}Z_{i+2}v
				\in{T(i-p,i+p,i+3;v)}\subseteq{T(i,i,i+3;v)}.
			\end{align*}
			Likewise, from Proposition \ref{preo},  we have
			\begin{align*}
				Z_{i+p}Z_{i+2}v\in{S(1;i+p-1,i+3;v)},
			\end{align*}
			and as ${S(1;i+p-1,i+3;v)}\subseteq{T(i+p-1,i+3;v)}$,  we get
			\begin{align*}
				Z_{i+1-p}Z_{i+p}Z_{i+2}v
				\in{T(i-p+1,i+p-1,i+3;v)}\subseteq{T(i,i,i+3;v)}.
			\end{align*}
			Therefore, we obtain the consequence.
		\end{proof}

		\begin{prop}\label{Butler}
			For $i\in2\mathbb{Z}$, we have
			\begin{align*}
				Z_{i}Z_{i}Z_{i+3}v\in{T(i,i,i+3;v)},\qquad
				Z_{i-3}Z_{i}Z_{i}v\in{T(i-3,i,i;v)}.
			\end{align*}
		\end{prop}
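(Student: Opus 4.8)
The plan is to treat the two displayed containments in turn, the first via Lemma \ref{lem4} and the second via its left--right reflection.

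For $Z_{i}Z_{i}Z_{i+3}v\in T(i,i,i+3;v)$: since $\omega^{5}-\omega^{2}\neq0$, Lemma \ref{lem4} reduces the claim to $Z_{i-1}Z_{i+2}Z_{i+2}v\in T(i,i,i+3;v)$. I would dissolve the rightmost pair $Z_{i+2}Z_{i+2}$ using (\ref{lem1T''}) with $i$ replaced by $i+2$: multiplying that relation on the left by $Z_{i-1}(\alpha_{5})$ gives
\begin{align*}
\bigl(d(i+2)\,Z_{i-1}Z_{i+2}Z_{i+2}-Z_{i-1}(\alpha_{5})Z_{2i+4}(\beta_{2})\bigr)v
\ \in\ Z_{i-1}(\alpha_{5})\cdot T''(i+2,i+2;v),
\end{align*}
and a direct inspection of $\le_{T}$-suffix sums shows $Z_{i-1}(\alpha_{5})\cdot T''(i+2,i+2;v)\subseteq T(i,i,i+3;v)$: the length $\le 2$ generators are automatic, and for $k\ge1$ one has $(i,i,i+3)<_{T}(i-1,\,i+2-2k,\,i+2+2k)$. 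As $d(i+2)\neq0$, it suffices to prove $Z_{i-1}(\alpha_{5})Z_{2i+4}(\beta_{2})v\in T(i,i,i+3;v)$.

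For this I would commute $\alpha_{5}$ past $\beta_{2}$: set $\alpha=\alpha_{5}$, $\beta=\beta_{2}$, $k=2$ in (\ref{thrm}) and compare the coefficients of $\zeta_{1}^{\,i-1-p}\zeta_{2}^{\,2i+4+p}$ for the first few $p\ge0$. One checks that $C_{-2}(\alpha_{5},\beta_{2})=\emptyset$, so no additive constant appears, while the two merged roots $\nu^{q}\alpha_{5}+\beta_{2}$ ($q\in C_{-1}(\alpha_{5},\beta_{2})$) both lie in the $\nu$-orbit of $\alpha_{5}$, so each ``merged'' term is a scalar multiple of $Z_{3i+3}(\alpha_{5})=Z_{3i+3}$, a length-one monomial already in $T(i,i,i+3;v)$. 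Because $Z_{\mathrm{odd}}(\beta_{2})=0$ on $L(\Lambda_{0})\otimes L(\Lambda_{1})$ by (\ref{tensor}), only the products $Z_{i-1-2l}(\alpha_{5})Z_{2i+4+2l}(\beta_{2})v$ and $Z_{2i+4-2l}(\beta_{2})Z_{i-1+2l}(\alpha_{5})v$ survive each comparison. All but finitely many are then pushed into $T(i,i,i+3;v)$ by a second pass: for $l\ge1$ one trades $Z_{2i+4+2l}(\beta_{2})$ for a length-two monomial of shape $Z_{\ast}Z_{\ast}$ or $Z_{\ast}Z_{\ast+2}$ according to the parity of $l$, using (\ref{lem1T''}) or (\ref{lem2T''}) modulo $T''$-terms the order absorbs; and for large $l$ one applies Proposition \ref{pree} to $Z_{2i+4-2l}(\beta_{2})$ acting on the length-one vector $Z_{i-1+2l}v$ --- exactly in the spirit of the proof of Proposition \ref{Austin}. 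What remains is a finite linear system whose unknowns are $Z_{i-1}(\alpha_{5})Z_{2i+4}(\beta_{2})v$ together with a few products of the form $Z_{\ast}(\beta_{2})Z_{\ast}(\alpha_{5})v$; as in Propositions \ref{pree}, \ref{proo} and \ref{Austin} one verifies that the relevant determinant of $c(\alpha_{5},\beta_{2},\cdot)$'s and $c(\beta_{2},\alpha_{5},\cdot)$'s --- a concrete computation in $\mathbb{Z}[\omega]$ --- does not vanish, and concludes $Z_{i-1}(\alpha_{5})Z_{2i+4}(\beta_{2})v\in T(i,i,i+3;v)$. Hence $Z_{i}Z_{i}Z_{i+3}v\in T(i,i,i+3;v)$.

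The containment $Z_{i-3}Z_{i}Z_{i}v\in T(i-3,i,i;v)$ I would obtain by the mirror of this argument: first prove the left--right reflected form of Lemma \ref{lem4}, expressing $Z_{i-3}Z_{i}Z_{i}$ through $Z_{i-2}Z_{i-2}Z_{i+1}$ modulo $T(i-3,i,i;v)$, then repeat the two preceding steps with the rightmost even pair replaced by the leftmost one --- dissolving $Z_{i-2}Z_{i-2}$ via (\ref{lem1T''}) and using (\ref{lem2T''}) for the gap-two even pairs that arise, commuting $\beta_{2}$ past $\alpha_{5}$ on the relevant vectors, with the analogous $\le_{T}$-bookkeeping and determinant check. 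The step I expect to be the genuine obstacle is the last one of the first claim: pinning down \emph{exactly} which bi-products $Z_{\ast}(\alpha_{5})Z_{\ast}(\beta_{2})v$ and $Z_{\ast}(\beta_{2})Z_{\ast}(\alpha_{5})v$ escape the crude $\le_{T}$-estimate, packaging them into a square system, and checking nonvanishing of its coefficient determinant --- the same bookkeeping that forces the $3\times3$ and $4\times4$ determinant computations in Propositions \ref{Austin} and \ref{9odd}, made more delicate here by the simultaneous presence of both parities among the indices.
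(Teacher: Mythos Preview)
Your overall plan --- invoke Lemma~\ref{lem4}, trade $Z_{i+2}Z_{i+2}$ for $Z_{2i+4}(\beta_{2})$ via (\ref{lem1T''}), then run the $\alpha_{5}$/$\beta_{2}$ generalized commutation relation --- is exactly the paper's route, and your remarks on $C_{-2}(\alpha_{5},\beta_{2})=\emptyset$ and on the merged terms lying in the $\nu$-orbit of $\alpha_{5}$ are correct. The gap is in how you close the last step. You assert that a determinant built only from the coefficients $c(\alpha_{5},\beta_{2},\cdot)$ and $c(\beta_{2},\alpha_{5},\cdot)$ will force $Z_{i-1}(\alpha_{5})Z_{2i+4}(\beta_{2})v\in T(i,i,i+3;v)$. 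It will not: after any single coefficient comparison the products $Z_{2i+2}(\beta_{2})Z_{i+1}(\alpha_{5})v$ and $Z_{2i}(\beta_{2})Z_{i+3}(\alpha_{5})v$ survive, and neither lies in $T(i,i,i+3;v)$ --- via (\ref{lem1T}), (\ref{lem2T''}) and Lemma~\ref{lem3}(2) each is congruent to a \emph{nonzero} multiple of $Z_{i}Z_{i}Z_{i+3}v$ modulo $T$. Shifting the comparison point does not close the system: every shift introduces a new surviving bi-product (e.g.\ $Z_{2i+4}(\beta_{2})Z_{i-1}(\alpha_{5})v$, then $Z_{2i+6}(\beta_{2})Z_{i-3}(\alpha_{5})v$, \dots), so the number of unknowns keeps pace with the number of equations. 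This is where the situation genuinely differs from Proposition~\ref{Austin}, in which only one bi-product survived.

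What the paper does is keep the two \emph{triples} $Z_{i}Z_{i}Z_{i+3}v$ and $Z_{i-1}Z_{i+2}Z_{i+2}v$ as the unknowns. A single coefficient comparison, after converting all three surviving bi-products back to these triples using (\ref{lem1T''}), (\ref{lem2T''}) and Lemma~\ref{lem3}(2), yields one linear relation between them modulo $T(i,i,i+3;v)$; Lemma~\ref{lem4} is then the \emph{second}, independent relation. The determinant that has to be checked nonzero is therefore a $2\times2$ determinant involving $d(i)$, $d(i+2)$, $e(i)$, $(\tfrac{1}{2}\omega^{3}+\tfrac{1}{2}\omega^{2}-\tfrac{1}{2})$ and $\omega^{5}-\omega^{2}$, not merely the $c$'s. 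In your write-up Lemma~\ref{lem4} is spent as a one-way reduction at the outset, so it is no longer available to provide this second equation; that is the missing move. Your treatment of the mirror statement $Z_{i-3}Z_{i}Z_{i}v\in T(i-3,i,i;v)$ inherits the same issue.
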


		\begin{proof}
			We 
			prove only $Z_{i}Z_{i}Z_{i+3}v\in{T(i,i,i+3;v)}$.
			First,  we show 
			\begin{align*}
				&(c(\alpha_{5},\beta_{2},1)Z_{i-1}(\alpha_{5})
				Z_{2i+4}(\beta_{2})
				-
				c(\beta_{2},\alpha_{5},1)Z_{2i+2}(\beta_{2})
				Z_{i+1}(\alpha_{5})\notag\\*
				&-
				c(\beta_{2},\alpha_{5},3)Z_{2i}(\beta_{2})
				Z_{i+3}(\alpha_{5}))v
				\in{T(i,i,i+3;v)}
			\end{align*}
			for $i\in2\mathbb{Z}$. 
			By assigning $\alpha=\alpha_{5}$, $\beta=\beta_{2}$ and $k=2$ in (\ref{thrm})
			and comparing the coefficients of $\zeta_{1}^{i}\zeta_{2}^{2i+3}$, we have
			\begin{align}\label{i,2i+3}
				&\sum_{p\ge1,\ p\in\mathbb{Z}\backslash2\mathbb{Z}}
				(c(\alpha_{5},\beta_{2},p)
				Z_{i-p}(\alpha_{5})
				Z_{2i+3+p}(\beta_{2})
				-
				c(\beta_{2},\alpha_{5},p)Z_{2i+3-p}(\beta_{2})
				Z_{i+p}(\alpha_{5}))v\notag\\*
				&=d_{4}(i)Z_{i+b}(\alpha_{5})v,
			\end{align}
			where $d_{4}(i)$ is a constant depend on $i$.
			Set $n\in\mathbb{Z}$. 
			If $n\in2\mathbb{Z}$, from (\ref{lem1T}), we have
			$
			Z_{2i+2n}(\beta_{2})v\in{T(i+n+1,i+n-1;v)}.
			$
			If $n\in\mathbb{Z}\backslash2\mathbb{Z}$, from (\ref{lem2T}), we have
			$
			Z_{2i+2n}(\beta_{2})v\in{T(i+n,i+n;v)}.
			$
			Thus, for $p\in\mathbb{Z}\backslash2\mathbb{Z}$ such that $p\ge3$, 
			if $3+p\in4\mathbb{Z}+2$, we obtain
			\begin{align} \label{i-p,2i+3+p,1}
				Z_{i-p}(\alpha_{5})Z_{2i+3+p}(\beta_{2})v
				\in{T\left(i-p,i+\frac{3+p}{2},i+\frac{3+p}{2};v\right)}\subseteq{T(i,i,i+3;v)},
			\end{align}
			and if $3+p\in4\mathbb{Z}$, we have
			\begin{align} \label{i-p,2i+3+p,2}
				Z_{i-p}(\alpha_{5})Z_{2i+3+p}(\beta_{2})v
				\in{T\left(i-p,i+\frac{3+p}{2}+1,i+\frac{3+p}{2}-1;v\right)}\subseteq{T(i,i,i+3;v)}.
			\end{align}
			For $p\in\mathbb{Z}\backslash2\mathbb{Z}$ such that $p\ge5$, 
			similarly we get 
			\begin{align} \label{2i+3-p,i+p}
				Z_{2i+3-p}(\beta_{2})Z_{i+p}(\alpha_{5})v\in{T(i,i,i+3;v)}.
			\end{align}
			From (\ref{i,2i+3}),(\ref{i-p,2i+3+p,1}),(\ref{i-p,2i+3+p,2}) and (\ref{2i+3-p,i+p}), 
			we have
			\begin{align}\label{middle}
				&(c(\alpha_{5},\beta_{2},1)Z_{i-1}(\alpha_{5})
				Z_{2i+4}(\beta_{2})
				-
				c(\beta_{2},\alpha_{5},1)Z_{2i+2}(\beta_{2})
				Z_{i+1}(\alpha_{5})\notag\\*
				&-
				c(\beta_{2},\alpha_{5},3)Z_{2i}(\beta_{2})
				Z_{i+3}(\alpha_{5}))v
				\in{T(i,i,i+3;v)}.
			\end{align}

			Now we prove
			\begin{align*}
				&(Z_{i-1}(\alpha_{5})Z_{2i+4}(\beta_{2})
				-d(i+2)Z_{i-1}
				Z_{i+2}Z_{i+2})v
				\in
				T(i,i,i+3;v),\\
				&(Z_{2i+2}(\beta_{2})Z_{i+1}(\alpha_{5})
				+
				e(i)
				\left(\frac{1}{2}\omega^3 + \frac{1}{2}\omega^2 - \frac{1}{2}\right)
				Z_{i}
				Z_{i}
				Z_{i+3})v
				\in
				{T(i,i,i+3;v)},\\
				&(Z_{2i}(\beta_{2})Z_{i+3}(\alpha_{5})
				-d(i)Z_{i}Z_{i}
				Z_{i+3})v
				\in
				T(i,i,i+3;v).
			\end{align*}
%
%
			From (\ref{lem1T''}), we have
			\begin{align*}
				(Z_{2i+4}(\beta_{2})
				-d(i+2)Z_{i+2}Z_{i+2})v
				\in{T''(i+2,i+2;v)},
			\end{align*}
			thus we have
			\begin{align}\label{i-1,2i+4}
				&(Z_{i-1}(\alpha_{5})Z_{2i+4}(\beta_{2})\notag\\*
				&-d(i+2)Z_{i-1}
				Z_{i+2}Z_{i+2})v
				\in{Z_{i-1}T''(i+2,i+2;v)}
				\subseteq
				T(i,i,i+3;v).
			\end{align}


			Likewise, from (\ref{lem2T''}), we have
			\begin{align*}
				(Z_{2i+2}(\beta_{2})
				-
				e(i)Z_{i}Z_{i+2})v
				\in
				T''(i,i+2;v),
			\end{align*}
			thus we have
			\begin{align}\label{2i+2,i+1}
				&(Z_{2i+2}(\beta_{2})Z_{i+1}(\alpha_{5})\notag\\*
				&-
				e(i)Z_{i}Z_{i+2}
				Z_{i+1})v
				\in
				\SPAN\{Z_{i-2k}Z_{i+2+2k}Z_{i+1}v\mid
				k\in\mathbb{Z}_{\ge1}\}.
			\end{align}
			From Lemma \ref{lem3} (1), we have
			\begin{align*}
				Z_{i+2+2n}
				Z_{i+1}v
				\in
				T(i+1+n,i+2+n;v),
			\end{align*}
			thus we have
			\begin{align*}
				Z_{i-2n}
				Z_{i+2+2n}
				Z_{i+1}v
				\in
				T(i-2n,i+1+n,i+2+n;v)
				\subseteq{T(i,i,i+3;v)}
			\end{align*}
			for $n\ge1$. Therefore, we obtain
			\begin{align*}
				\SPAN\{Z_{i-2k}Z_{i+2+2k}Z_{i+1}v\mid
				k\in\mathbb{Z}_{\ge1}\}\subseteq{T(i,i,i+3;v)},
			\end{align*}
			and together with (\ref{2i+2,i+1}), we get
			\begin{align} \label{2i+2,i+1,2}
				(Z_{2i+2}(\beta_{2})Z_{i+1}(\alpha_{5})
				-
				e(i)Z_{i}Z_{i+2}
				Z_{i+1})v
				\in
				T(i,i,i+3;v).
			\end{align}
			From Lemma \ref{lem3} (2), we obtain
			\begin{align}\label{i,i+2,i+1}
				(Z_{i}
				Z_{i+2}
				Z_{i+1}
				+\left(\frac{1}{2}\omega^3 + \frac{1}{2}\omega^2 - \frac{1}{2}\right)
				Z_{i}
				Z_{i}
				Z_{i+3})v
				\in
				{T(i,i,i+3;v)}.
			\end{align}
			From (\ref{2i+2,i+1,2})
			and
			(\ref{i,i+2,i+1}),  we have
			\begin{align}\label{2i+2,i+1,3}
				&(Z_{2i+2}(\beta_{2})Z_{i+1}(\alpha_{5})
				+
				e(i)
				\left(\frac{1}{2}\omega^3 + \frac{1}{2}\omega^2 - \frac{1}{2}\right)
				Z_{i}
				Z_{i}
				Z_{i+3})v
				\in
				{T(i,i,i+3;v)}.
			\end{align}


			Moreover, from (\ref{lem1T}), we have
			\begin{align*}
				(Z_{2i}(\beta_{2})
				-d(i)Z_{i}(\alpha_{1})Z_{i}(\alpha_{1}))v
				\in
				T(i,i;v),
			\end{align*}
			thus we obtain
			\begin{align}\label{2i,i+3}
				(Z_{2i}(\beta_{2})Z_{i+3}(\alpha_{5})
				-d(i)Z_{i}Z_{i}
				Z_{i+3})v
				\in
				T(i,i,i+3;v).
			\end{align}
    
    
			From (\ref{middle}),(\ref{i-1,2i+4}),(\ref{2i+2,i+1,3}) and (\ref{2i,i+3}),  we have
			\begin{align}\label{middlecon}
				&(c(\alpha_{5},\beta_{2},1)d(i+2)Z_{i-1}
				Z_{i+2}Z_{i+2}\notag\\*
				&+
				c(\beta_{2},\alpha_{5},1)e(i)
				\left(\frac{1}{2}\omega^3 + \frac{1}{2}\omega^2 - \frac{1}{2}\right)
				Z_{i}
				Z_{i}
				Z_{i+3}\notag\\*
				&-
				c(\beta_{2},\alpha_{5},3)d(i)
				Z_{i}Z_{i}
				Z_{i+3})v
				\in{T(i,i,i+3;v)}.
			\end{align}
			From (\ref{middlecon}) and Lemma \ref{lem4},
			we have
			$Z_{i}
			Z_{i}
			Z_{i+3}v
			\in{T(i,i,i+3;v)}$. 
		\end{proof}

		For $v=v_{\Lambda_{0}+\Lambda_{1}}$,
		we have
		$Z_{-2}Z_{-2}v
		\in{T(-2,-2;v)}$,
		since we obtain
		\begin{align*}
			Z_{-2}Z_{-2}v
			&=(11\omega^5 - 11\omega^3 - 7\omega^2 + 9\omega + 4)
			Z_{-3}Z_{-1}v\notag\\*
			&+(-\frac{1}{18}\omega^5 + \frac{1}{6}\omega^4 
			- \frac{1}{9}\omega^2 - \frac{1}{9}\omega + \frac{1}{9})Z_{-4}
			v
		\end{align*}
		by direct (and tedious) calculation.
				
		The condition $i_{l}\le-2$ (resp. $i_{l}\le-4$)
		in the case of $L(\Lambda_{3})$ (resp. $L(\Lambda_{5})$) follows
		from the principal character $\chi(\Lambda_{3})$ (resp. $\chi(\Lambda_{5})$).
		By the RR type identities,  we can see that the given sets are bases.

\end{document}